\newtheorem{thm}{Theorem}[section]
\newtheorem{lem}{Lemma}[section]
\newtheorem{rem}{Remark}[section]
\newtheorem{dfn}{Definition}[section]
\newcommand{\pd}{\partial}
\newcommand{\gm}{\gamma}
\newcommand{\Gm}{\Gamma}
\newcommand{\lm}{\lambda}
\newcommand{\R}{\mathbb{R}}
\newcommand{\N}{\mathbb{N}}
\newcommand{\av}[1]{\left| #1 \right|}
\newcommand{\vk}{\kappa}
\newcommand{\vp}{\varphi}
\newcommand{\va}{\alpha}
\newcommand{\ve}{\varepsilon}
\newcommand{\mL}{\mathcal{L}}
\newcommand{\argmin}{\operatornamewithlimits{arg\,min}}
\begin{document}

\title{\bf A gradient flow for the $p$-elastic energy \\ defined on closed planar  curves}
\author{{\sc S. Okabe} \thanks{Mathematical Institute, Tohoku University, Japan, \url{okabes@m.tohoku.ac.jp} }, {\sc P. Pozzi}  \thanks{Fakult\"at f\"ur Mathematik, Universit\"at Duisburg-Essen, Germany, \url{paola.pozzi@uni-due.de}} , and {\sc G. Wheeler} \thanks{Institute for Mathematics and Its Applications, University of Wollongong, Australia, \url{glenw@uow.edu.au}}}
\date{\today}

\maketitle

\begin{abstract}
We study the evolution of closed inextensible planar curves under a second order flow that decreases the $p$-elastic energy.
A short time existence result for $p \in (1, \infty) $ is obtained via a minimizing movements method.
For $p=2$, that is in the case of the classic elastic energy, long-time existence is retrieved.
\end{abstract}

\textbf{Keywords:} $p$-elastic flow, $p$-Laplace operator,  minimizing movements, short-time existence\\

\textbf{MSC(2010):}   35K92, 53A04, 53C44.
\tableofcontents



\section{Introduction}
\label{sec:F4}
Let $\gm: S^{1} \to \R^{2}$ denote a  smooth  regular closed planar curve and let the  $p$-elastic energy be defined by  
\begin{align} \label{p-energy}
E_{p}(\gm) := \dfrac{1}{p} \int_{\gm} \av{\vk(s)}^{p} \, ds, 
\end{align}
where $\vk$ and $s$ denote the scalar curvature and the arc length parameter of $\gm$, and $ 1<p <\infty$.

In the following we study a \emph{second order gradient flow} for the energy \eqref{p-energy}: our approach follows  ideas of  Wen~\cite{Wen} and Lin-Lue-Schwetlick~\cite{Lin15}, which deal with the cases of closed resp. open curves for $p=2$, and generalizes results of Wen \cite{Wen}  to the cases where $p \neq 2$. However, our Ansatz is different in that we employ minimizing movements ideas to prove short time existence of the flow. In particular we prove existence  of the flow in a \emph{weak sense}.

\bigskip

Before we proceed with giving our main statements let us remark that the  above energy \eqref{p-energy} and its associated gradient flow (for mostly smooth initial data) have been intensively studied in the case $p=2$ (see for instance \cite{LangerSinger1}, \cite{LangerSinger2} \cite{DKS},
\cite{Wen95}, \cite{Koiso}, \cite{Polden}, \cite{Lin12}, \cite{DP14}, \cite{DLCana}, \cite{DCP17}, \cite{DPS}, \cite{DLCPS}, \cite{Spener}, \cite{Okabe2007}, \cite{okabe2008}, \cite{NO14}, \cite{NO17}, \cite{Wheeler}, \cite{ManMar}, \cite{Oelz11}, \cite{Oelz14}  and references given therein). The case $p\neq 2$ has received so far  less attention.
Critical points for  \eqref{p-energy}  and $1<p<\infty$ are studied by Watanabe in \cite{Watanabe}. The generalized elastica problem under area constraint is studied by  Ferone-Kawohl-Nitsch in \cite{FKN}. 

The energy $E_p(\gm)$ is relevant for applications. For instance  it has appeared  in a mathematical model for image processing, as we shall now explain.  
In a  mathematical model for image restoration, Rudin-Osher-Fatemi \cite{ROF} proposed the following variational problem: 
\begin{align} \label{rof-model}
\argmin_{u \in BV(\Omega)}\left\{  \int_{\Omega} |\nabla u| \, dx + \lambda \int_{\Omega} |u-g|^{2}\, dx \right\}, 
\end{align}
where $BV(\Omega)$ denotes the space of functions of bounded variations in a bounded domain $\Omega \subset \R^2$, 
$\lm$ is a positive constant, and $g$ is a given function representing a noisy image. 
A reconstructed version of $g$ is obtained as the minimizer $u$ of \eqref{rof-model}. 
Since the model \eqref{rof-model} does not distinguish between jumps and smooth transitions, 
Chan-Marquina-Mulet \cite{CMM} considered an additional penalization of the discontinuity, proposing the following higher order model for image restoration: 
\begin{align} \label{cmm-model}
\argmin\left\{ \int_{\Omega} \psi(|\nabla u|) h(\Delta u) \, dx + \int_{\Omega} |\nabla u| \, dx + \lambda \int_{\Omega} |u-g|^{2}\, dx \right\}, 
\end{align}
where $\psi$ is a suitably chosen map. For the one-dimensional case $\Omega = (a, b)$, the energy functional in \eqref{cmm-model} with $\psi(t) = \dfrac{1}{(1+t^{2})^{(3p-1)/2}}$ and $h(t)=t^{p}$, is then slightly modified into 
\begin{align} \label{1-d-ir}
  \int_{c_{u}} |\vk_{u}|^{p} \, ds + \mL(c_{u})+\lambda \int^b_a | u-g |^2 \, dx, 
\end{align}
where $c_{u}(t)$ is the curve $(t, u(t))$, and $\mL(c_{u})$ and $\vk_{u}$ denotes the length and the scalar curvature of $c_{u}$ (see \cite{AM, DFLM}). 

\bigskip

Our main results read as follows.
Let $I := (0, L)$ and $\phi(s) := 2 \pi \eta s / L$ for some fixed $\eta \in \N$. 
For maps $u \in W^{1,p}_{\rm per}(I)$ we   consider the problem
\begin{align} \label{eq:P} \tag{P}
\begin{cases}
&\partial_t u = \partial_s \left( |\partial_s(u + \phi)|^{p-2} \partial_s(u+\phi) \right) + \tilde{\lambda}_1(u) \sin(u+\phi) - \tilde{\lambda}_2(u) \cos(u+\phi), \\
& \tilde{\lambda}_1(u) = \dfrac{1}{{\rm det}A_{T}(u+\phi)}\left[\displaystyle{ \int_I |\partial_s(u+\phi)|^p \cos{(u+\phi)} \, ds \int_I \cos^2{(u+\phi)}\, ds } \right. \\ 
      & \qquad \qquad \qquad \qquad \left. + \displaystyle{ \int_I |\partial_s(u+\phi)|^p \sin{(u+\phi)} \, ds \int_I \sin{(u+\phi)} \cos{(u+\phi)}\, ds } \right], \\
& \tilde{\lambda}_2(u) = \dfrac{1}{{\rm det}A_{T}(u+\phi)}\left[\displaystyle{ \int_I |\partial_s(u+\phi)|^p \cos{(u+\phi)} \, ds \int_I \cos{(u+\phi)} \sin{(u+\phi)}\, ds } \right. \\ 
      & \qquad \qquad \qquad \qquad \left. + \displaystyle{ \int_I |\partial_s(u+\phi)|^p \sin{(u+\phi)} \, ds \int_I \sin^2{(u+\phi)} \, ds } \right], \\
& u(\cdot,0)= u_0(\cdot).     
\end{cases}
\end{align}
where, for $\theta=u+\phi$, we set
\begin{align}\label{defmatrice}
A_{T}(\theta) := 
\begin{pmatrix}
\int_I \sin^2{\theta} \, ds & -\int_I \sin{\theta} \cos{\theta} \, ds \\
-\int_I \sin{\theta} \cos{\theta} \, ds & \int_I \cos^2{\theta} \, ds
\end{pmatrix}, 
\end{align}
and where the initial data  $u_0 : I \to \mathbb{R}$ satisfies 
\begin{align} \label{eq:1.1}
u_0 \in W^{1,p}_{\rm per}(I) \quad \text{and} \quad \int_I \cos{(u_0+\phi)}\, ds = \int_I \sin{(u_0+\phi)}\, ds = 0.
\end{align}
The significance of this system is that, by interpreting the map $u$ above
as the oscillation of the tangential angle, we are able to construct a gradient
flow of $E_p$ by solving \eqref{eq:P} (see \eqref{EQg1} and the discussion
thereafter).
\begin{dfn} \label{def:WS}
We say that $u : I \times [0, T) \to \mathbb{R}$ is a weak solution of \eqref{eq:P} if the following hold{\rm :}
\begin{enumerate}
\item[{\rm (i)}] $u \in L^\infty(0,T; W^{1,p}_{\rm per}(I)) \cap H^1(0,T;L^2(I))${\rm ;} 
\item[{\rm (ii)}] there exist $\lambda_1$, $\lambda_2 \in L^2(0,T)$ such that $u$ satisfies 
\begin{equation} \label{eq:1.2}
\begin{split}
& \int^T_0 \int_I \left[ \left\{ \partial_t u - \lambda_1 \sin(u+\phi) + \lambda_2 \cos{(u+\phi)} \right\} \varphi \right. \\ 
                & \qquad \qquad \left. + \left( |\partial_s(u+\phi)|^{p-2} \partial_s(u+\phi) \right) \partial_s \varphi \right] \, ds dt = 0 
\end{split}
\end{equation}
for any $\varphi \in L^\infty(0,T; W^{1,p}_{\rm per}(I))${\rm ;} 
\item[{\rm (iii)}] for any $t \in [0, T)$, it holds that 
\begin{align} \label{eq:1.3}
\int_I \cos{(u(\cdot, t)+\phi)}\, ds = \int_I \sin{(u(\cdot,t)+\phi)}\, ds = 0; 
\end{align}
\item[{\rm (iv)}] $u(0) = u_0$ in $W^{1,p}_{\rm per}(I)$. 
\end{enumerate}
\end{dfn}
Our main results are the following:

\begin{thm} \label{Theorem:1.1}
Let $1 <p < \infty$. Assume that $u_0 \in W^{1,p}_{ \rm per}(I)$ satisfies \eqref{eq:1.1}.
Then there exists $T>0$ such that \eqref{eq:P} possesses a weak solution in $[0,T)$.
\end{thm}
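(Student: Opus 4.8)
The plan is to construct the weak solution by a minimizing-movements (implicit Euler) scheme that is compatible with the closedness constraint \eqref{eq:1.3}. Fix a time step $\tau>0$, put $u^0:=u_0$, and, given $u^{k-1}$, define $u^k$ as a minimizer of
\[
  \mathcal{F}_\tau(v):=\frac{1}{2\tau}\Ln{v-u^{k-1}}{I}^2+\frac1p\int_I|\partial_s(v+\phi)|^p\,ds
\]
over the admissible class $\mathcal{A}:=\bigl\{v\in W^{1,p}_{\rm per}(I):\int_I\cos(v+\phi)\,ds=\int_I\sin(v+\phi)\,ds=0\bigr\}$; here $\partial_s(v+\phi)$ is the curvature of the curve whose tangent angle is $v+\phi$, so the second term is $E_p$. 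Existence of $u^k$ follows from the direct method: $\mathcal{F}_\tau$ is coercive on $W^{1,p}_{\rm per}(I)$ (the $L^2$-penalty fixes the mean value while the $p$-term controls $\partial_s v$) and weakly lower semicontinuous, whereas $\mathcal{A}$ is weakly closed because $v\mapsto\int_I\cos(v+\phi)$ and $v\mapsto\int_I\sin(v+\phi)$ are weak-to-strong continuous through the compact embedding $W^{1,p}(I)\hookrightarrow C(\overline{I})$.

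Writing the first-order optimality condition for $u^k$, the two scalar constraints produce two Lagrange multipliers $\lambda_1^k,\lambda_2^k$, and the stationarity identity becomes the discrete analogue of \eqref{eq:1.2},
\begin{align*}
  \int_I\Bigl[&\tfrac{u^k-u^{k-1}}{\tau}\,\varphi+|\partial_s(u^k+\phi)|^{p-2}\partial_s(u^k+\phi)\,\partial_s\varphi\\
  &\quad-\lambda_1^k\sin(u^k+\phi)\,\varphi+\lambda_2^k\cos(u^k+\phi)\,\varphi\Bigr]\,ds=0
\end{align*}
for all $\varphi\in W^{1,p}_{\rm per}(I)$. Testing this identity with $\sin(u^k+\phi)$ and $\cos(u^k+\phi)$ and integrating by parts, the constraint forces a $2\times2$ linear system for $(\lambda_1^k,\lambda_2^k)$ whose coefficient matrix is exactly $A_{T}(u^k+\phi)$ from \eqref{defmatrice}; solving it reproduces the formulas for $\tilde\lambda_1,\tilde\lambda_2$ in \eqref{eq:P}. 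I expect the control of these multipliers to be the main obstacle. Since $\det A_{T}(\theta)=\int_I\sin^2\theta\int_I\cos^2\theta-(\int_I\sin\theta\cos\theta)^2\ge0$ by Cauchy–Schwarz, vanishing precisely when $\sin\theta,\cos\theta$ are linearly dependent in $L^2(I)$, I would bound $\det A_{T}$ below by a positive constant for the initial datum and then propagate this bound to a short interval, using the uniform $W^{1,p}$-bound (hence equi-Hölder continuity in space) together with the $H^1$-in-time bound below (controlling the change of $u+\phi$ in $L^2$) to keep $\det A_{T}$ close to its positive initial value; shrinking $T$ if necessary. This is what simultaneously makes the $\lambda_i^k$ well defined and yields a uniform $L^2$-in-time bound on them.

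The a priori estimates come from the minimizing-movements inequality $\mathcal{F}_\tau(u^k)\le\mathcal{F}_\tau(u^{k-1})$, i.e.
\[
  \frac1{2\tau}\Ln{u^k-u^{k-1}}{I}^2+E_p(u^k+\phi)\le E_p(u^{k-1}+\phi),
\]
which, summed in $k$, bounds $E_p$ uniformly and gives $\sum_k\tau\Ln{(u^k-u^{k-1})/\tau}{I}^2\le C$. Introducing the piecewise-constant and piecewise-linear interpolants in time, these translate into uniform bounds in $L^\infty(0,T;W^{1,p}_{\rm per}(I))$ and $H^1(0,T;L^2(I))$. By the Aubin–Lions lemma I would extract a subsequence converging weakly-$*$, weakly, and strongly in the corresponding spaces; the strong $L^2$-convergence together with the one-dimensional embedding upgrades to uniform convergence of $u+\phi$, which passes to the limit in \eqref{eq:1.3} and in the lower-order terms carrying $\lambda_1,\lambda_2$.

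The remaining point is the identification of the weak limit of the nonlinear flux $|\partial_s(u+\phi)|^{p-2}\partial_s(u+\phi)$, which I would handle by the Minty–Browder monotonicity trick: exploiting the monotonicity of the $p$-Laplace operator together with the energy inequality to pass to the limit in $\int_I|\partial_s(u+\phi)|^p$, one identifies the weak limit of the flux and thereby verifies \eqref{eq:1.2}. Properties (i), (iii), and (iv) of Definition~\ref{def:WS} then follow from the uniform bounds, the continuity of the constraint functionals, and the initialization $u^0=u_0$, completing the proof.
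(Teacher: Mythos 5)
Your strategy is sound and would prove Theorem~\ref{Theorem:1.1}, but it is structurally different from the paper's. You impose closedness as a \emph{hard} constraint on the admissible class and obtain the Lagrange multipliers from the first-order conditions of the constrained minimization; the paper instead minimizes the unconstrained functional $G_{i,n}=F_p+P_{i,n}+H_{i,n}$ in \eqref{Min}, where $H_{i,n}$ is a \emph{linearized} constraint term whose multipliers $\tilde{\lambda}_j(u_{i-1,n})$ are explicit and frozen at the previous step. The trade-off: your scheme makes condition (iii) of Definition~\ref{def:WS} automatic (every discrete iterate lies in the constraint set, so \eqref{eq:1.3} passes to the limit by uniform convergence) and yields a clean monotone energy inequality without the error term coming from $H_{i,n}$; the paper must recover (iii) only in the limit, through the identification $\lambda_j=\tilde{\lambda}_j(u)$ in Lemma~\ref{lem:3.16} and the conservation argument of Lemma~\ref{lem:closedness}. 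In exchange, the paper's multipliers are explicit functionals of the previous iterate, which is what makes the induction of Theorem~\ref{thm:3.2} and the subsequent velocity and regularity estimates tractable. Your Minty--Browder identification of the flux is essentially the convexity argument of Lemma~\ref{lem:4.4}.

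Two points in your sketch need care. First, the multiplier rule requires the constraint derivative $\varphi\mapsto\bigl(-\int_I\sin(\theta)\varphi\,ds,\ \int_I\cos(\theta)\varphi\,ds\bigr)$ to be surjective at the minimizer, i.e.\ $\det A_T(\theta)>0$; rather than propagating positivity from the initial datum (workable, but circuitous), invoke the paper's Lemma~\ref{lemLM}, which bounds $\det A_T$ below for \emph{every} $u\in W^{1,p}_{\rm per}(I)$ in terms of $\|\partial_s(u+\phi)\|_{L^p(I)}$ alone, because the angle $u+\phi$ necessarily sweeps a full turn. Second, testing the discrete Euler--Lagrange identity with $\sin(u^k+\phi)$ and $\cos(u^k+\phi)$ does produce the coefficient matrix $A_T$ of \eqref{defmatrice}, but the right-hand side also carries the terms $\int_I\tau^{-1}(u^k-u^{k-1})\sin(u^k+\phi)\,ds$ and its companion, which do not vanish at the discrete level (only $\int_I\bigl(T(\theta^k)-T(\theta^{k-1})\bigr)\,ds=0$ holds exactly); so your $\lambda_j^k$ are \emph{not} the $\tilde{\lambda}_j$ of \eqref{eq:P}, only $O(\tau)$-close to them. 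This is harmless for Theorem~\ref{Theorem:1.1}, since Definition~\ref{def:WS} only asks for \emph{some} $\lambda_j\in L^2(0,T)$ and the extra terms are controlled by $\sum_k\tau\|(u^k-u^{k-1})/\tau\|_{L^2(I)}^2\le C$, but the claim that the formulas of \eqref{eq:P} are exactly reproduced should be dropped.
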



In fact using  Lemma~\ref{lem:3.20} below  it even follows that the weak solution of Theorem \ref{Theorem:1.1} is unique for $p\geq 2$   under some additional smallness and regularity assumption on $u_{0}$ when $p>2$.

In the special case where $p=2$ we will also prove the following result.
\begin{thm} \label{Theorem:1.2}
Let $p=2$ and assume that  $u_0 \in W^{2,p}_{\rm per}(I)$ satisfies \eqref{eq:1.1}. 
Then \eqref{eq:P} possesses a unique global-in-time  weak solution (that is we may choose an arbitrarily large $T$ in Definition~\ref{def:WS}).  
\end{thm}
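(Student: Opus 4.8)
The plan is to prove Theorem~\ref{Theorem:1.2} by a continuation argument: starting from the short-time weak solution produced by Theorem~\ref{Theorem:1.1}, I would derive a priori bounds, uniform on every finite time interval, that rule out finite-time breakdown, so that the solution extends to an arbitrarily large $T$. The decisive simplification for $p=2$ is that $|\partial_s(u+\phi)|^{p-2}\partial_s(u+\phi)=\partial_s(u+\phi)$ and $\partial_s^2\phi=0$ (recall $\phi(s)=2\pi\eta s/L$ is affine), so the principal part of \eqref{eq:P} is the genuine linear operator $\partial_s^2 u$. Thus \eqref{eq:P} is a semilinear, nonlocal, uniformly parabolic equation whose nonlinearity $\tilde\lambda_1\sin(u+\phi)-\tilde\lambda_2\cos(u+\phi)$ is built from bounded trigonometric functions and the nonlocal coefficients $\tilde\lambda_1,\tilde\lambda_2$ defined through \eqref{defmatrice}. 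Uniqueness (and hence the consistency of any continuation) I would obtain directly from Lemma~\ref{lem:3.20}, which for $p=2$ applies without a smallness hypothesis.

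The first quantitative input is the energy identity. Testing \eqref{eq:P} with $\partial_t u$ and integrating by parts gives $\int_I \partial_s^2(u+\phi)\,\partial_t u\,ds=-\tfrac{d}{dt}E_2(u(t))$, while differentiating the closure constraints \eqref{eq:1.3} in time shows $\int_I \sin(u+\phi)\,\partial_t u\,ds=\int_I \cos(u+\phi)\,\partial_t u\,ds=0$, so the contributions of $\tilde\lambda_1,\tilde\lambda_2$ drop out and
\[
\frac{d}{dt}E_2(u(t))=-\|\partial_t u(t)\|_{L^2(I)}^2\le 0 .
\]
Hence $E_2(u(t))\le E_2(u_0)$ for all $t$, giving the uniform bound $\|\partial_s(u+\phi)(t)\|_{L^2(I)}^2\le 2E_2(u_0)$ and the time-integrated bound $\int_0^T\|\partial_t u\|_{L^2(I)}^2\,dt\le E_2(u_0)$. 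The same computation shows that the flow preserves the mean $\int_I u\,ds$ (again because of \eqref{eq:1.3}), so by periodicity and a Poincar\'e--Wirtinger inequality one obtains a uniform-in-time bound on $\|u(t)\|_{W^{1,2}(I)}$, which is the coercive quantity that must be kept from blowing up.

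Next I would control the nonlocal terms and upgrade regularity. The crucial point is the nondegeneracy $\det A_T(u+\phi)\ge c_0>0$: under the closure constraints \eqref{eq:1.3} and the winding assumption $\eta\in\N$, the functions $\sin(u+\phi)$ and $\cos(u+\phi)$ remain quantitatively linearly independent, so $A_T$ is uniformly invertible (I would invoke the corresponding lower bound established earlier). Since the $W^{1,2}$ bound controls $\int_I|\partial_s(u+\phi)|^2\,ds$ and the trigonometric weights are bounded, each numerator in the definitions of $\tilde\lambda_1,\tilde\lambda_2$ is bounded, whence
\[
\|\tilde\lambda_1\|_{L^\infty(0,T)}+\|\tilde\lambda_2\|_{L^\infty(0,T)}\le C\bigl(E_2(u_0),L,\eta\bigr).
\]
With the forcing thus uniformly bounded, standard parabolic bootstrapping for $\partial_s^2$ — testing with $-\partial_s^2 u$ to estimate $\|\partial_s u\|_{L^2(I)}$, then differentiating the equation in $s$ — promotes the solution, using $u_0\in W^{2,2}_{\rm per}(I)$, to $u\in C([0,T];H^2(I))\cap L^2(0,T;H^3(I))$, with norms bounded on each finite interval in terms of the data alone. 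In particular no Sobolev norm diverges in finite time, and the constraints \eqref{eq:1.3} persist.

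The global statement then follows by contradiction: if $T_{\max}<\infty$ were the maximal existence time, the finite-interval estimates above would keep $u(t)$ bounded in $W^{1,2}(I)$ (indeed in $H^2(I)$), with \eqref{eq:1.3} and $\det A_T\ge c_0$ preserved as $t\uparrow T_{\max}$; re-applying Theorem~\ref{Theorem:1.1} from a time close to $T_{\max}$ and gluing by the uniqueness from Lemma~\ref{lem:3.20} would extend the solution past $T_{\max}$, a contradiction, so $T_{\max}=\infty$. I expect the main obstacle to lie precisely in the nonlocal structure: propagating the lower bound $\det A_T\ge c_0$ and the uniform boundedness of $\tilde\lambda_1,\tilde\lambda_2$ along the entire flow, and securing enough regularity to make both the energy identity and the uniqueness testing rigorous — rather than in the (routine) linear parabolic estimates for $\partial_s^2$.
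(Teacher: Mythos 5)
Your overall architecture is the same as the paper's: the engine of globality is that the energy is non-increasing along the flow, so the existence time guaranteed by the short-time theorem (which, via $c_*=2\|(u_0+\phi)_s\|_{L^p}^p$ and Theorem~\ref{thm:3.2}, depends only on the energy of the datum) does not shrink upon restarting, and uniqueness (Lemma~\ref{lem:3.20}) glues the pieces. The paper simply iterates the minimizing-movements construction on consecutive intervals of the \emph{same} length $T$, taking $u(T)$ as the new datum; you phrase this as a maximal-time contradiction argument, which is an equivalent packaging. Where you diverge is in how the needed regularity is obtained, and this is the one place your write-up has a soft spot. Your energy identity is derived by ``testing \eqref{eq:P} with $\partial_t u$'', but $\partial_t u$ is only in $L^2(0,T;L^2(I))$ a priori, whereas admissible test functions in Definition~\ref{def:WS} lie in $L^\infty(0,T;W^{1,p}_{\rm per}(I))$; making this rigorous is exactly what forces the hypothesis $u_0\in W^{2,2}_{\rm per}(I)$, and the paper does it at the discrete level (Lemma~\ref{controlVel} giving $u\in H^1(0,T;W^{1,2}(I))$, then Lemma~\ref{lem:3.18}). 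Your further claim of ``standard parabolic bootstrapping'' to $C([0,T];H^2)\cap L^2(0,T;H^3)$ is neither proved in the paper nor needed: to restart one only needs $u(T)\in W^{2,2}_{\rm per}(I)$ with the constraints \eqref{eq:1.3}, which Theorem~\ref{thm:existence} already supplies through the weak derivative of $|(u+\phi)_s|^{p-2}(u+\phi)_s$ and the natural boundary condition, together with Lemma~\ref{lem:closedness}. If you replace the asserted bootstrapping by these facts, your continuation argument closes; as written, that step is an unjustified overclaim rather than a load-bearing one.
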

We comment that while monotonicity of the energy holds, convergence
 of a subsequence $u(\cdot,t_j)$ to an equilibrum as
$j\rightarrow\infty$ follows via a standard
argument. However, in order to obtain full convergence, we expect
further regularity is required.
There are two ways this arises: first, smoothness for a standard
exponential decay-style argument (see \cite{Huisken} for the classical
occurance of this, and \cite{ideal} for a recent higher-order flow of curves
appearance), and regularity in a Sobolev space of high order for the more
technical \L ojasiewicz-Simon gradient inequality approach (see
\cite{DPS} for a recent higher-order flow of curves example).

\bigskip

The above theorems essentially imply the (short-time) existence of a weak flow for the $p$-elastic energy of inextensible curves (these are curves for which both length and parametrization speed are constant). The connection between the above formulation \eqref{eq:P} and the considered planar closed inextensible curves $\gamma$ we started with at the beginning of this work is thoroughly discussed in Section~\ref{sec:motivation}: there, derivation, choice of the appropriate function spaces, and the equivalence of \eqref{eq:P} to a gradient flow for $E_{p}$ mentioned earlier, is motivated and explained in detail.

We have stated above that for $p>2$ and a smallness condition on the initial data, we can still show uniqueness of the weak solution.
Note that the smallness condition can be thought of as  an initial curve being close to a (possibly multi-covered) circle. In this case it is reasonable to expect again long time existence of the flow, however different  techniques other than those presented here seem to be necessary to draw this conclusion.
This question 
will be the  subject of  future investigation.

\bigskip

As already mentioned we prove short-time existence via a minimizing movements approach. Interestingly, most often difficulties arise when dealing with the Lagrange-multipliers $\tilde{\lambda}_{j}$.
The structure of the article is as follows: after providing the notation and some preliminary results in Section~\ref{sec:2}, we define our approximation scheme in Section ~\ref{sec:3.1}.  In particular we derive a suitable family of approximating maps and approximating Lagrange multipliers. The control of the latter is a very delicate steps  (see Lemma~\ref{est-lambda}). 
The differentiability in time of the Lagrange multipliers is strictly connected to a control of the discrete velocities: this is investigated in   Lemma~\ref{controlVel} (for $p=2$) and Lemma~\ref{lem:controlVelpBig} (for $p>2$). It is with these Lemmas that the restriction $p \geq2$ comes into play (we need to use Lemma~\ref{Lindquist}). 
In Section~\ref{sec:3.2} we discuss in detail the approximating procedure, in particular  we provide a proof for Theorem~\ref{Theorem:1.1} and show  uniqueness  and some smoothing properties of the weak flow, when $p \geq 2$. Finally in Section~\ref{sec:4} we prove Theorem~\ref{Theorem:1.2}. 

\bigskip

\noindent \textbf{Acknowledgements:} This project has been funded by the Deutsche Forschungsgemeinschaft (DFG, German Research Foundation)- Projektnummer: 404870139.
The first author was partially supported by JSPS KAKENHI Grant numbers 15H02058 and 16H03946. 
The third author gratefully acknowledges support from ARC DP 150100375.

\subsection{Motivation}\label{sec:motivation}
To motivate \eqref{eq:P} and the formulations of our main theorems we collect here a few important ideas.
Consider smooth closed \emph{periodic} curves $\gm=\gm(s)$ of fixed length $L$ parametrized by arc-length $s$ over the domain $\bar{I}=[0,L]$.
Let $T=T(s)=\gamma'(s)$ denote the unit tangent of the curve $\gamma$. It is well known  that   a planar curve is uniquely determined by its tangent indicatrix $T$ (up to rotation and translation).
Recall the formulas $T'= \vec{\kappa}=\vk N$, $N'=-\vk T$, as well as $\theta'(s)= \vk(s)$, where  $T=(\cos \theta, \sin \theta)$.
The map $\theta$ is called the indicatrix of the curve $\gamma$.
Let 
$$ \mathcal{A} :=  C^{\infty}_{\rm per} (\bar{I}, S^{1})  
$$
denote the set of all smooth periodic maps from $[0,L]$ into the unit circle.
For a map $T \in \mathcal{A}$ to describe a closed curve we have to impose the condition $\gamma(L)=\gamma(0)$ which translates into
\begin{align}\label{constraint}
\int_{I} T ds =0.
\end{align}
The $p$-elastic energy \eqref{p-energy} (for $1<p<\infty$) can be written as
\begin{align}
E_{p}(\gamma)= \frac{1}{p} \int_{I}|\vec{\vk}|^{p} ds = \frac{1}{p} \int_{I} |\partial_{s} T|^{p} ds=:F_{p}(T).
\end{align}
Instead of considering an $L^{2}$-gradient flow of fourth order for the energy $E_{p}(\gamma)$, 
we take here a variation of the tangent vector $T$ directly from the perspective of the functional $F_{p}(T)$.
This gives rise to a second order parabolic equation.

More precisely, consider variations of type $T_{\epsilon}= \frac{T+\epsilon \varphi}{|T + \epsilon \varphi |}$ for $\epsilon$ small enough and $\varphi \in C^{\infty}_{\rm per}(I,\R^{2})$.
Then 
$$
\frac{d}{d \epsilon} \Big|_{\epsilon =0} T_{\epsilon}=\varphi - (\varphi \cdot T )T=:\varphi^{\perp}.
$$
Since we want to include the constraint \eqref{constraint} we compute
$$ \frac{d}{d \epsilon} \Big|_{\epsilon =0} \left(  F_{p}(T_{\epsilon}) + \vec{\lambda} \cdot \int_{I} T_{\epsilon} ds \right) =0 , $$
where $\vec{\lambda}=(\lambda_{1},\lambda_{2}) \in \R^{2}$ is a Lagrange multiplier.
A direct computation gives
\begin{align*}
0 &= \int_{I} |\partial_{s} T|^{p-2} \partial_{s} T \cdot \partial_{s} \varphi^{\perp} + \vec{\lambda}\cdot \varphi^{\perp} ds= \int_{I} [- (|\partial_{s} T|^{p-2} \partial_{s}T)_{s}  + \vec{\lambda} ] \cdot \varphi^{\perp} ds  \\
& = \int_{I}[ - \nabla_{s} ( |\partial_{s} T|^{p-2} \partial_{s}T) + (\vec{\lambda} \cdot N) N] \cdot \varphi \, ds,
\end{align*}
where $\nabla_{s} \varphi= \partial_{s} \varphi - (\partial_{s} \varphi \cdot T) T$ and where we have used the periodicity property of the maps.

This motivates studying the \emph{second-oder flow}
\begin{align} \label{sys1}
\partial_{t} T &= \nabla_{s} ( |\partial_{s} T|^{p-2} \partial_{s}T) - (\vec{\lambda} \cdot N) N \qquad \text{ in } I \times (0,t_{*}) \\  \label{sys2}
T(\cdot, 0) &=T_{0}
\end{align}
for some smooth initial data $T_{0}$ satisfying \eqref{constraint}.
Here $\vec{\lambda}=  \vec{\lambda}(t)$ is defined by
\begin{align}\label{defvecL}
\vec{\lambda}=(\lambda_{1}, \lambda_{2}): = \left( \int_{I} |\partial_{s}T|^{p} T ds \right) A_{T}^{-1}
\end{align}
where
\begin{align}
A_{T} = \int_{I} N^{t}\,  N ds
\end{align}
is a $2\times 2$ matrix. Note that if $\det A_{T}=0$ then the Lagrange multiplier is not well defined. By definition we have
$$ \vec{\lambda} \cdot A_{T}=\int_{I} (\vec{\lambda} \cdot N) N ds = \int_{I} |\partial_{s}T|^{p} T ds,
$$
so that, as long as the flow is well defined and as smooth as required, we infer
\begin{align*}
\frac{d}{dt} \int_{I} T ds &= \int_{I} T_{t} ds =  \int_{I} \nabla_{s} ( |\partial_{s} T|^{p-2} \partial_{s}T) - (\vec{\lambda} \cdot N) N  ds \\
&= \int _{I} \partial_{s} ( |\partial_{s} T|^{p-2} \partial_{s}T) ds - \int_{} (\partial_{s}( |\partial_{s} T|^{p-2} \partial_{s}T) \cdot T) T ds - \vec{\lambda} \cdot A_{T} =0
 \end{align*}
due to the periodicity of the map $T$ and since $\partial_{s} T \cdot T =0$ and $\partial_{s}^{2}T \cdot T=- |\partial_{s}T|^{2}$. In other words the constraint \eqref{constraint} is satisfied along the flow.

Note that the energy decreases along the flow. Using the computation above,  the fact that $T_{t}$ is a normal vector field and $\int_{I} T_{t} ds =0 $, we find
\begin{align*}
\frac{d}{dt} F_{p}(T) &= \int_{I} [ - \nabla_{s} ( |\partial_{s} T|^{p-2} \partial_{s}T)] \cdot T_{t} ds = \int_{I} [ - \nabla_{s} ( |\partial_{s} T|^{p-2} \partial_{s}T)] \cdot T_{t} ds + \vec{\lambda} \cdot \int_{I} T_{t} ds \\
& = \int_{I} [ - \nabla_{s} ( |\partial_{s} T|^{p-2} \partial_{s}T)] \cdot T_{t}  + (\vec{\lambda} \cdot N) N \cdot T_{t} ds\\ 
&= - \int_{I} |\partial_{t} T|^{2 } ds \leq 0.
\end{align*}

The system \eqref{sys1}, \eqref{sys2} can be converted into a scalar PDE for $\theta: I \times (0, t_{*})$, where 
\begin{align}\label{defiT}
T(s,t)= (\cos \theta(s,t), \sin \theta(s,t)).
\end{align}
Since
\begin{align*} 
 \nabla_{s} ( |\partial_{s} T|^{p-2} \partial_{s}T)&=  \partial_{s} ( |\partial_{s} T|^{p-2}) \partial_{s}T  +
 |\partial_{s} T|^{p-2} ( \partial_{s}^{2} T - (\partial_{s}^{2} T \cdot T) T)\\
 &=(|\theta_{s}|^{p-2})_{s} \theta_{s}N + |\theta_{s}|^{p-2} \theta_{ss} N,
\end{align*}
we infer (for some fixed $\eta \in \N$ that essentially fixes the rotation index of the curve)
\begin{align}\label{systheta1}
\theta_{t}&= (|\theta_{s}|^{p-2} \theta_{s})_{s} + \lambda_{1}(t) \sin \theta - \lambda_{2}(t) \cos \theta  \qquad \text{ in } I \times (0, t_{*})\\
\label{systheta2gen}
\theta(L, t) - \theta(0, t)& =2\pi \eta \qquad \text{ for all } t,\\
\label{systheta2bis}
|\theta_{s}|^{p-2} \theta_{s} (0, t) & =  |\theta_{s}|^{p-2} \theta_{s} (L, t) \qquad \text{ for all } t,\\
\label{systheta3}
\theta( \cdot, 0) &= \theta_{0}(\cdot).
\end{align}
Condition \eqref{systheta2bis} replaces/stresses  the periodicity property of the map $T$ that has been used several times in the preceding calculations.

The Lagrange multipliers are computed using \eqref{defvecL}. More precisely, since $A_{T}(\theta)$  is given as in \eqref{defmatrice}, 
we get
\begin{align*}
\lambda_{1}(t)&=\frac{1}{\det A_{T}(\theta)} \left(  \int_{I} |\theta_{s}|^{p} \cos \theta \, ds  \int_{I} \cos^{2} \theta \, ds   +  \int_{I} |\theta_{s}|^{p} \sin \theta \, ds  \int_{I} \cos \theta  \sin \theta\, ds  \right), \\
\lambda_{2}(t)&=\frac{1}{\det A_{T}(\theta)} \left(  \int_{I} |\theta_{s}|^{p} \cos \theta \, ds  \int_{I} \cos \theta \sin \theta  \, ds   +  \int_{I} |\theta_{s}|^{p} \sin \theta \, ds  \int_{I}   \sin^{2} \theta\, ds  \right) .
\end{align*}

Note that by Cauchy-Schwarz we have $\det A_{T} \geq 0$. 
For the Lagrange multiplier to be well-defined we need a bound from below on the determinant: this is shown below (see Lemma~\ref{lemLM}). 

Although the next result is not surprising, we report it here in detail since many of the calculations therein will be important for our later discussion.
\begin{lem}\label{energieabnahme}
Suppose $\theta$ is a sufficiently smooth solution of \eqref{systheta1},\eqref{systheta2gen},\eqref{systheta2bis}, \eqref{systheta3}. Then
$$F_{p}(\theta):= \frac{1}{p} \int_{I} |\theta_{s}|^{p} ds$$ decreases along the flow, i.e. $\frac{d}{dt} F_{p}(\theta) \leq 0$.
\end{lem}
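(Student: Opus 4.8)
The plan is to show $\frac{d}{dt}F_p(\theta)\le 0$ by direct differentiation, exploiting that the evolution equation \eqref{systheta1} is (up to the Lagrange multiplier terms) the $L^2$-gradient flow of $F_p(\theta)=\frac1p\int_I|\theta_s|^p\,ds$. First I would compute
\begin{align*}
\frac{d}{dt}F_p(\theta)=\int_I |\theta_s|^{p-2}\theta_s\,\partial_s(\theta_t)\,ds,
\end{align*}
differentiating under the integral sign (legitimate since $\theta$ is assumed sufficiently smooth) and using $\frac{d}{dt}\frac1p|\theta_s|^p=|\theta_s|^{p-2}\theta_s\,\theta_{st}$ together with $\theta_{st}=\partial_s\theta_t$. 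The next step is integration by parts in $s$:
\begin{align*}
\frac{d}{dt}F_p(\theta)=\left[|\theta_s|^{p-2}\theta_s\,\theta_t\right]_0^L-\int_I \partial_s\!\left(|\theta_s|^{p-2}\theta_s\right)\theta_t\,ds.
\end{align*}

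The boundary term must be shown to vanish. This is precisely the role of the periodicity/boundary conditions \eqref{systheta2gen} and \eqref{systheta2bis}: condition \eqref{systheta2bis} gives $|\theta_s|^{p-2}\theta_s(0,t)=|\theta_s|^{p-2}\theta_s(L,t)$, while differentiating \eqref{systheta2gen} in $t$ yields $\theta_t(L,t)=\theta_t(0,t)$. Hence the product $|\theta_s|^{p-2}\theta_s\,\theta_t$ takes equal values at $s=0$ and $s=L$, so the boundary contribution cancels. I would then substitute the PDE \eqref{systheta1} in the form $\partial_s(|\theta_s|^{p-2}\theta_s)=\theta_t-\lambda_1\sin\theta+\lambda_2\cos\theta$ into the remaining integral, obtaining
\begin{align*}
\frac{d}{dt}F_p(\theta)=-\int_I\theta_t^2\,ds+\lambda_1\int_I\theta_t\sin\theta\,ds-\lambda_2\int_I\theta_t\cos\theta\,ds.
\end{align*}

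The key remaining point is that the two Lagrange-multiplier integrals vanish. Here I would use the constraint \eqref{constraint}, equivalently $\int_I\cos\theta\,ds=\int_I\sin\theta\,ds=0$, which is preserved along the flow (as verified in the motivation section for the vectorial formulation \eqref{sys1}). Differentiating these two identities in $t$ gives $\frac{d}{dt}\int_I\cos\theta\,ds=-\int_I\theta_t\sin\theta\,ds=0$ and $\frac{d}{dt}\int_I\sin\theta\,ds=\int_I\theta_t\cos\theta\,ds=0$, so both multiplier terms are exactly zero. This leaves $\frac{d}{dt}F_p(\theta)=-\int_I\theta_t^2\,ds\le 0$, which is the claim and also identifies $|\theta_t|^2$ as the dissipation rate, mirroring the vectorial computation $\frac{d}{dt}F_p(T)=-\int_I|\partial_tT|^2\,ds$ already carried out above.

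I expect the main (though modest) obstacle to be the careful handling of the boundary term, since $\theta$ itself is not periodic (it jumps by $2\pi\eta$), so one cannot naively invoke periodicity; the argument must route through the $t$-derivative of \eqref{systheta2gen} to get periodicity of $\theta_t$, and through \eqref{systheta2bis} for the flux. The rest is a routine integration by parts, and the vanishing of the multiplier terms is a clean consequence of differentiating the preserved closure constraint.
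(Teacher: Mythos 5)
Your first half coincides with the paper's argument: differentiate under the integral, integrate by parts, and kill the boundary term $\left[|\theta_s|^{p-2}\theta_s\,\theta_t\right]_0^L$ by combining \eqref{systheta2bis} with the $t$-derivative of \eqref{systheta2gen}. The sign bookkeeping after substituting the PDE is also right: one is left with $-\int_I\theta_t^2\,ds+\lambda_1\int_I\theta_t\sin\theta\,ds-\lambda_2\int_I\theta_t\cos\theta\,ds$, i.e.\ $-\int_I\theta_t^2\,ds-\vec\lambda\cdot\frac{d}{dt}\int_IT(\theta)\,ds$.

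The gap is in the last step. You dispose of the multiplier terms by invoking that the closure constraint ``is preserved along the flow,'' citing the vectorial computation for \eqref{sys1}. But for the scalar problem, the preservation of $\int_I T(\theta)\,ds$ is exactly the nontrivial assertion, and it is the main content of the paper's proof of this lemma; assuming it here is circular, since $\frac{d}{dt}\int_I\cos\theta\,ds=0$ and $\frac{d}{dt}\int_I\sin\theta\,ds=0$ are precisely the identities $\int_I\theta_t\sin\theta\,ds=\int_I\theta_t\cos\theta\,ds=0$ you are trying to establish. Note also that nothing forces $\int_IT(\theta)\,ds$ to equal zero under the hypotheses of the lemma; what one actually needs is only that it is constant in time. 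To close the argument you must use the explicit formula for $(\lambda_1,\lambda_2)$ through $A_T(\theta)^{-1}$: it gives $\int_I(\vec\lambda\cdot N(\theta))N(\theta)\,ds=\int_I|\theta_s|^pT(\theta)\,ds$; writing $|\theta_s|^pT(\theta)=-|\theta_s|^{p-2}\theta_s\,(N(\theta))_s$ and integrating by parts, one obtains
\begin{align*}
\frac{d}{dt}\int_IT(\theta)\,ds=\int_I\theta_tN(\theta)\,ds=\left[|\theta_s|^{p-2}\theta_s\,N(\theta)\right]_0^L,
\end{align*}
which vanishes by \eqref{systheta2gen} and \eqref{systheta2bis}. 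The vectorial argument in the motivation section relies on genuine periodicity of $T$ and does not transfer to $\theta$ without this boundary-term analysis, which is the very point your write-up correctly flags for the first integration by parts but then skips at the decisive one.
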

\begin{proof} 
We have
\begin{align*}
\frac{d}{dt} F_{p}(\theta) &= \int_{I} |\theta_{s}|^{p-2} \theta_{s} \theta_{st} \,ds=[|\theta_{s}|^{p-2} \theta_{s} \theta_{t} ]_{0}^{L}
- \int_{I}  (|\theta_{s}|^{p-2} \theta_{s} )_{s}\theta_{t} \,ds= -\int_{I}  (|\theta_{s}|^{p-2} \theta_{s} )_{s}\theta_{t}\, ds
\end{align*}
with the boundary disappearing thanks to \eqref{systheta2gen}, \eqref{systheta2bis}. 
Writing $\vec{\lambda} = (\lambda_{1}(t), \lambda_{2}(t))$, $T(\theta)=T(s,t)= (\cos \theta(s,t), \sin \theta (s,t))$, $N(\theta)=
N(s,t)= (-\sin \theta(s,t), \cos \theta(s,t))$ , equation \eqref{systheta1} reads
\begin{align}\label{systheta1concise}
\theta_{t}= (|\theta_{s}|^{p-2} \theta_{s} )_{s} - ( \vec{\lambda} \cdot N(\theta) ).
\end{align}
It follows that
\begin{align*}
\frac{d}{dt} F_{p}(\theta) = -\int_{I} |\theta_{t}|^{2}  ds- (\vec{\lambda} \cdot \frac{d}{dt} \int_{I} T(\theta) ds ) = -\int_{I} |\theta_{t}|^{2}  ds \leq 0
\end{align*}
if and only if  $ \frac{d}{dt} \int_{I} T(\theta) ds  = (0,0)$.
Next we show that the definition of  $\vec{\lambda}$ gives $ \frac{d}{dt} \int_{I} T(\theta) ds  = (0,0)$.
By definition we have that
$$ (\lambda_{1}(t), \lambda_{2}(t)) \cdot A_{T}(\theta) = \int_{I} |\theta_{s}|^{p} (\cos \theta, \sin \theta) ds,$$ which can be equivalently written as
\begin{align}\label{vorrei}
[-\int_{I} (\vec{\lambda} \cdot  N(\theta)) \sin \theta ds,  \int_{I} (\vec{\lambda} \cdot  N(\theta)) \cos \theta ds] = \int_{I}  (\vec{\lambda} \cdot  N(\theta)) N(\theta) ds= \int_{I} |\theta_{s}|^{p} (\cos \theta, \sin \theta) ds.
\end{align}
Again this can be expressed as
\begin{align}\label{lagmult}
 \int_{I}  (\vec{\lambda} \cdot  N(\theta)) N(\theta) ds&= \int_{I} |\theta_{s}|^{p} T(\theta) ds=  \int_{I} |\theta_{s}|^{p-2} \theta_{s} \theta_{s} T(\theta) ds = - \int_{I} |\theta_{s}|^{p-2} \theta_{s} (N(\theta))_{s}  ds
 \notag \\
 & = -[|\theta_{s}|^{p-2} \theta_{s} N(\theta)]_{0}^{L} + \int_{I} (|\theta_{s}|^{p-2} \theta_{s})_{s} N(\theta) ds.
\end{align}
Therefore we infer by \eqref{systheta1concise} and the above expression that
\begin{align}\label{derivataT}
 \frac{d}{dt} \int_{I} T(\theta) ds &= \int_{I} \theta_{t} N(\theta) ds = \int_{I} ( (|\theta_{s}|^{p-2} \theta_{s} )_{s} - ( \vec{\lambda} \cdot N(\theta) )) N(\theta) = [|\theta_{s}|^{p-2} \theta_{s} N(\theta)]_{0}^{L}\\
 & =[|\theta_{s}|^{p-2} \theta_{s} (-\sin \theta, \cos \theta)]_{0}^{L} = (-\sin \theta(0), \cos \theta(0)) [|\theta_{s}|^{p-2} \theta_{s} ]_{0}^{L} \nonumber
 \end{align}
 where we have used that $\sin \theta(0,t)=\sin \theta(L, t)$, $\cos \theta(0,t)=\cos \theta(L, t)$ due to \eqref{systheta2gen}.
 The claim now follows from \eqref{systheta2bis}.
\end{proof}

Note that there is a natural bound from below for the energy.
\begin{lem}\label{fenchel}
We have that  
$$ F_{p}(\theta) \geq \frac{1}{p} \frac{(2\pi \eta)^{p}}{L^{p-1}}.$$
\end{lem}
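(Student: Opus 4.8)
The plan is to reduce the statement to a single application of Hölder's inequality, using only the winding-number constraint on the indicatrix. The sole structural input I need is that $\theta$ carries rotation index $\eta$, which is encoded in \eqref{systheta2gen}: by the fundamental theorem of calculus,
\begin{align*}
\int_I \theta_s \, ds = \theta(L) - \theta(0) = 2\pi\eta.
\end{align*}
In particular $2\pi\eta = \int_I \theta_s \, ds \le \int_I |\theta_s|\, ds$, so it suffices to bound the $L^1$-norm of $\theta_s$ from above by its $L^p$-norm.

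The key step is to estimate, writing $p' = p/(p-1)$ for the conjugate exponent of $p$,
\begin{align*}
2\pi\eta \le \int_I |\theta_s| \cdot 1 \, ds \le \left( \int_I |\theta_s|^p \, ds \right)^{1/p} \left( \int_I 1 \, ds \right)^{1/p'} = \left( \int_I |\theta_s|^p \, ds \right)^{1/p} L^{(p-1)/p},
\end{align*}
where the middle inequality is Hölder's inequality on $I=(0,L)$. Raising both sides to the $p$-th power gives $(2\pi\eta)^p \le L^{p-1} \int_I |\theta_s|^p \, ds$, and dividing by $p L^{p-1}$ yields exactly
\begin{align*}
F_p(\theta) = \frac{1}{p}\int_I |\theta_s|^p \, ds \ge \frac{1}{p}\frac{(2\pi\eta)^p}{L^{p-1}},
\end{align*}
which is the claim.

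There is no genuine obstacle here: the only care required is that the estimate uses nothing beyond the constraint \eqref{systheta2gen}, so it holds for every admissible indicatrix (not merely for solutions of the flow) as long as $\theta_s \in L^p(I)$, which is guaranteed in the function class under consideration. It is worth remarking that the bound is sharp, with equality precisely when $|\theta_s|$ is constant, i.e.\ when $\gamma$ is an $\eta$-fold covered circle; this justifies interpreting the smallness condition discussed earlier as closeness to a (possibly multiply covered) circle.
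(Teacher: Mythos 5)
Your proof is correct and follows exactly the paper's argument: the identity $\int_I \theta_s\,ds = 2\pi\eta$ from \eqref{systheta2gen} combined with H\"older's inequality against the constant function $1$ on $I$. The paper compresses this into a single line, while you spell out the details and add the (accurate) remark about sharpness for the multiply covered circle.
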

\begin{proof}
We have that $
2\pi \eta = \int_{0}^{L} \theta_{s} \leq \| \theta_{s}\|_{L^{p}(I)} L^{1-\frac{1}{p}}= (p F_{p}(\theta))^{\frac{1}{p}} L^{1-\frac{1}{p}}
$ 
which gives the claim. 
\end{proof}


\begin{rem}\label{rem1}
Assume that $\theta$ is a (sufficiently smooth) solution of \eqref{systheta1}, where $\lambda_{1}(t)$ and $\lambda_{2}(t)$ are well defined and computed as indicated above.
Set $\tilde{\theta}(s,t):= \theta(s,t) + \alpha(t)$ for some $\alpha(t) \in \R$ and compute $\tilde{\lambda}_{1}(t)$, $\tilde{\lambda}_{2}(t)$
accordingly. Then $\det A_{T}(\theta) = \det A_{T}(\tilde{\theta})$ follows from \eqref{eq:2.2}.
Moreover a long but straightforward computation gives  that
$$ \lambda_{1}(t) \sin \theta - \lambda_{2}(t) \cos \theta = \tilde{\lambda}_{1}(t) \sin \tilde{\theta} - \tilde{\lambda}_{2}(t) \cos \tilde{\theta}.$$
Obviously $(|\theta_{s}|^{p-2} \theta_{s})_{s} = (|\tilde{\theta}_{s}|^{p-2} \tilde{\theta}_{s})_{s}$.
Finally, if $\alpha$ does not depend on time also $\theta_{t}=\tilde{\theta}_{t}$.
Hence we see that $\tilde{\theta}:=\theta + const.$ also solves the PDE \eqref{systheta1}. This is in accordance with the geometrical fact that a rigid rotation of the curve $\gamma$ (whose indicatrix is $\theta$) does not change its geometry and so must be again a solution.
\end{rem}

In view of \eqref{systheta2gen} and  Remark~\ref{rem1} it is reasonable to write $\theta$ as
\begin{align}\label{EQg1}
\theta(s,t)=u(s,t) + \phi(s), \qquad \phi(s) := \frac{2\pi \eta }{L}s
\end{align}
with $u(\cdot,t)$ a periodic map i.e. $u(0,t)=u(L,t)$. Although at this stage it could make sense to additionally  ask that  $\int_{I} u ds =0$ (in order to factor out the translation invariance for $\theta$ mentioned above), it turns out that this constraint is detrimental for the discretization procedure we will use later on. This  and Lemma~\ref{lemLM} below motivates the choice of maps  $u \in W^{1,p}_{\rm per}(I)$.
Writing $u(s,t)=\theta(s,t)-\phi(s)$   the system \eqref{systheta1}, \eqref{systheta2gen}, \eqref{systheta3} is readily transformed into \eqref{eq:P}.
One notices immediately that we have  omitted the equation corresponding to \eqref{systheta2bis}. Since we will be working with $W^{1,p}_{\rm per}(I)$-maps such a condition at the boundary does not make any sense at first. However we will show that the problem \eqref{eq:P} admits a solution and that this solution is sufficiently smooth and satisfies  $|(u+ \phi)_{s}|^{p-2} (u+ \phi)_{s} (0, t)= |(u+ \phi)_{s}|^{p-2} (u+ \phi)_{s}(L,t)$ for all times as desired (basically it turns out to be a natural boundary condition).

\section{Notation and useful preliminary estimates}\label{sec:2}
In the following  we set  for $I=(0,L)$
\begin{align*}
C_{\rm per}(I) &:= \{ u \in C(\mathbb{R}) \mid u(s+kL)=u(s) \quad \text{for any} \quad s \in \bar{I} \quad \text{and} \quad k \in \mathbb{Z} \}, \\
W^{1,p}_{\rm per}(I) &:= \{ u \in W^{1,p}(I) \mid u \in C_{\rm per}(I) \},\\
W^{2,p}_{\rm per}(I) &:= \{ u \in W^{2,p}(I) \mid u, \, \partial_{s}u \in C_{\rm per}(I) \}. 
\end{align*}
Note that since $W^{1,p}(a,b)$ is embedded in the space of continous functions $C([a,b])$ for any $-\infty <a<b < \infty$ (see for instance \cite[Thm.2.2]{BGH}) the definitions are meaningful.
Also  for $u \in W^{1,p}_{\rm per}$ we write $u+\phi=\theta$ where $\phi(s)=2\pi\eta \frac{s}{L}$.

First of all we show that the  determinants considered in this work  (recall \eqref{eq:P}) never degenerate.
\begin{lem} \label{lemLM}
Let $u \in W^{1,p}_{\rm per}(I)$. 
Then it holds that 
\begin{align*}
{\rm det}A_{T}(u+\phi) \ge \dfrac{1}{4} \delta^2, 
\end{align*}
where 
\begin{align} \label{eq:2.1}
\delta:= \min \left\{ \dfrac{L}{2}, \left(\dfrac{\pi}{8 \| \partial_s u + \partial_s \phi \|_{L^p(I)}}\right)^{\tfrac{p}{p-1}} \right\}. 
\end{align}
\end{lem}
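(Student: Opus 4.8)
The plan is to recognize that $\det A_T(\theta)$ is exactly the Gram determinant of the two functions $\sin\theta,\cos\theta\in L^2(I)$, and to bound it from below by localizing to a region where these are genuinely independent. Writing $\theta=u+\phi$, I would first record the identity
$$
\det A_T(\theta) = \int_I \sin^2\theta\,ds\int_I\cos^2\theta\,ds - \left(\int_I\sin\theta\cos\theta\,ds\right)^2 = \frac12\int_I\int_I \sin^2(\theta(s)-\theta(\sigma))\,ds\,d\sigma ,
$$
which follows from the standard Gram/Lagrange identity together with $\sin(\theta(s)-\theta(\sigma))=\sin\theta(s)\cos\theta(\sigma)-\cos\theta(s)\sin\theta(\sigma)$. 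This re-proves the non-negativity already noted via Cauchy-Schwarz, is manifestly invariant under $\theta\mapsto\theta+\mathrm{const}$ (consistent with Remark~\ref{rem1}), and reduces the whole problem to producing a region in $I\times I$ of area $\gtrsim\delta^2$ on which $\sin^2(\theta(s)-\theta(\sigma))$ is bounded away from $0$.

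For the quantitative control I would use the Morrey-type estimate available since $\theta\in W^{1,p}(I)$:
$$
|\theta(s)-\theta(\sigma)| = \left|\int_\sigma^s \partial_s\theta\,d\tau\right| \le \|\partial_s u+\partial_s\phi\|_{L^p(I)}\,|s-\sigma|^{1-\frac1p}.
$$
The exponent in the definition \eqref{eq:2.1} is tuned precisely so that $\|\partial_s u+\partial_s\phi\|_{L^p(I)}\,\delta^{1-1/p}\le \pi/8$; hence $|s-s_*|\le\delta$ forces $|\theta(s)-\theta(s_*)|\le\pi/8$, and similarly in $\sigma$. Next I would locate a good center $(s_*,\sigma_*)$: since $u$ is periodic, $\theta(L)-\theta(0)=\phi(L)-\phi(0)=2\pi\eta$, so the continuous function $\theta$ has range of length at least $2\pi\eta\ge2\pi$, and the intermediate value theorem yields $s_*,\sigma_*\in\bar I$ with $\theta(s_*)-\theta(\sigma_*)=\pi/2$.

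Combining these, on the box $J_s\times J_\sigma$, where $J_s:=\{s\in I:|s-s_*|\le\delta\}$ and $J_\sigma:=\{\sigma\in I:|\sigma-\sigma_*|\le\delta\}$, the argument $\theta(s)-\theta(\sigma)$ stays within $\pi/4$ of $\pi/2$, i.e. in $[\pi/4,3\pi/4]$, so that $\sin^2(\theta(s)-\theta(\sigma))\ge\sin^2(\pi/4)=1/2$. Because $\delta\le L/2$ ensures $2\delta\le L$, each of $J_s,J_\sigma$ is a subinterval of $I$ of length at least $\delta$, even when $s_*$ or $\sigma_*$ sits at an endpoint. Therefore
$$
\det A_T(\theta) \ge \frac12\int_{J_s}\int_{J_\sigma}\sin^2(\theta(s)-\theta(\sigma))\,ds\,d\sigma \ge \frac12\cdot\frac12\cdot|J_s|\,|J_\sigma| \ge \frac14\delta^2 ,
$$
which is the claim.

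The Gram identity and the Hölder estimate are routine; the only point needing a little care is the boundary bookkeeping guaranteeing $|J_s|,|J_\sigma|\ge\delta$ for centers near $0$ or $L$. The genuinely conceptual step — and the reason the stated constants $\pi/8$ and $1/4$ come out exactly — is the matching between the Hölder exponent $1-1/p$ and the choice of $\delta^{p/(p-1)}$ in \eqref{eq:2.1}, which converts the $L^p$-bound on $\partial_s\theta$ into a pointwise oscillation bound of $\pi/8$ on a box of size $\delta$.
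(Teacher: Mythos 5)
Your proof is correct and follows essentially the same route as the paper's (which adapts Lemma 2.1 of Wen): the Gram/Lagrange identity $\det A_T=\tfrac12\iint\sin^2(\theta(s)-\theta(\sigma))$, the H\"older--Morrey oscillation bound tuned so that the exponent $p/(p-1)$ in \eqref{eq:2.1} yields a deviation of at most $\pi/4$ from the centered value $\pi/2$, and integration over a box of area at least $\delta^2$. The only cosmetic difference is that the paper normalizes $\theta(0)=0$ and fixes $\sigma_*=0$ rather than invoking the intermediate value theorem for a general pair $(s_*,\sigma_*)$.
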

\begin{proof}
The proof is a direct modification of the proof of Lemma 2.1 in \cite{Wen}. 
As usual let $\theta = u + \phi$. 
By a direct calculation, we see that 
\begin{align} \label{eq:2.2}
{\rm det}A_{T}(u+\phi) = \dfrac{1}{2} \iint_{I \times I} \sin^2{(\theta(\sigma) - \theta(s))} \, d\sigma ds. 
\end{align}
Without loss of generality we may assume that $\theta(0)=0$ and $\theta(L)=2\pi \eta$ (if not consider the map $\theta -\theta(0)$).
Let $s_0 \in (0, L)$ be such that $\theta(s_0)=\pi/2$ and  
let $E := [-\delta + s_0, s_0 + \delta] \times [-\delta, \delta]$, where 
\begin{align*}
\delta:= \min \left\{ \dfrac{L}{2}, \left(\dfrac{\pi}{8 \| \partial_s \theta \|_{L^p(I)}}\right)^{\tfrac{p}{p-1}} \right\}. 
\end{align*}
Then   since
\begin{align*}
|(\theta(s_2) - \theta(\tilde{s}_2)) - (\theta(s_1) - \theta(\tilde{s}_1))| 
& \le | \theta(s_2) - \theta(s_{1}) | + | \theta(\tilde{s}_2) - \theta(\tilde{s}_1) | \\
& \le  \| \partial_s \theta \|_{L^p(I)} \left( | s_2 - s_{1} |^{1-\tfrac{1}{p}} + | \tilde{s}_2- \tilde{s}_1 |^{1-\tfrac{1}{p}} \right),  
\end{align*}
we deduce that, for any $(s, \tilde{s}) \in E$, 
\begin{align*} 
| (\theta(s)-\theta(\tilde{s})) - (\theta(s_0)-\theta(0)) | 
 \le 2 \| \partial_s \theta \|_{L^p(I)} \delta^{1-\tfrac{1}{p}}  
 \le \dfrac{\pi}{4},  
\end{align*}
that is, 
\begin{align} \label{eq:2.3}
\dfrac{\pi}{4} \le | \theta(s)-\theta(\tilde{s})| \le \dfrac{3}{4} \pi. 
\end{align}
Therefore, we observe from \eqref{eq:2.2} and \eqref{eq:2.3} that 
\begin{align*}
{\rm det}A_{T}(u+\phi) 
 & \ge \dfrac{1}{2} \iint_{E} \sin^2{(\theta(\sigma) - \theta(s))} \, d\sigma ds 
  \ge \dfrac{1}{2} \iint_{E} \sin^2{\dfrac{\pi}{4}} \, d\sigma ds 
  = \dfrac{1}{4} \delta^2. 
\end{align*}
\end{proof}

By Lemma \ref{lemLM} we have the following estimate on $\tilde{\lambda}_1$ and $\tilde{\lambda}_2$. 
\begin{lem} \label{lem4.2}
Let $u \in W^{1,p}_{\rm per}(I)$ and $\tilde{\lambda}_{i}$ be defined as in  \eqref{eq:P}. Then it holds that 
\begin{align*}
|\tilde{\lambda}_i(u)| \le 8 L \| \partial_s (u + \phi) \|_{L^p(I)}^p \left( \dfrac{4}{L^2} + \left(\dfrac{8 \| \partial_s (u + \phi) \|_{L^p(I)}}{\pi} \right)^{\tfrac{2p}{p-1}} \right)   
\end{align*} 
for $i=1,2$. 
\end{lem}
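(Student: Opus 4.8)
The plan is to estimate $\tilde{\lambda}_1$ and $\tilde{\lambda}_2$ directly from their explicit expressions in \eqref{eq:P}, controlling the denominator from below via Lemma~\ref{lemLM} and the numerators from above by elementary pointwise bounds. Writing $\theta = u + \phi$, Lemma~\ref{lemLM} yields $\det A_T(\theta) \ge \tfrac14 \delta^2$ with $\delta$ as in \eqref{eq:2.1}, so that $1/\det A_T(\theta) \le 4/\delta^2$.

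For the numerators I would use that $|\cos\theta| \le 1$ and $|\sin\theta| \le 1$, hence $|\sin\theta\cos\theta| \le 1$, pointwise. This gives
$$
\left| \int_I |\theta_s|^p \cos\theta \, ds \right| \le \int_I |\theta_s|^p \, ds = \| \partial_s(u+\phi) \|_{L^p(I)}^p,
$$
the same bound for the $\sin\theta$ integral, and
$$
\left| \int_I \cos^2\theta \, ds \right|, \; \left| \int_I \sin^2\theta \, ds \right|, \; \left| \int_I \sin\theta\cos\theta \, ds \right| \le L.
$$
Since each of $\tilde{\lambda}_1,\tilde{\lambda}_2$ is, up to the prefactor $1/\det A_T$, a sum of two products of a curvature-type integral with a trigonometric integral, the triangle inequality bounds each numerator by $2L \, \| \partial_s(u+\phi) \|_{L^p(I)}^p$. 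Combining with the denominator estimate gives
$$
|\tilde{\lambda}_i(u)| \le \frac{8L}{\delta^2} \, \| \partial_s(u+\phi) \|_{L^p(I)}^p, \qquad i = 1, 2.
$$

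It then remains only to rewrite $\delta^{-2}$. Because $\delta$ is the minimum of two positive quantities, $\delta^{-2} = \max\{ 4/L^2, \, (8\| \partial_s(u+\phi) \|_{L^p(I)}/\pi)^{2p/(p-1)} \}$, and bounding the maximum by the sum through $\max\{a,b\} \le a+b$ produces exactly the bracket in the asserted inequality. The argument involves no genuine difficulty; the only point requiring some care is this last conversion of the minimum inside $\delta$ into the additive form of the statement, together with tracking the numerical constants so that they match the claimed bound.
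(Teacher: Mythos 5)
Your proposal is correct and follows essentially the same route as the paper: bound the numerators pointwise by $2L\|\partial_s(u+\phi)\|_{L^p(I)}^p$, invoke Lemma~\ref{lemLM} for the lower bound $\det A_T \ge \tfrac14\delta^2$, and convert $\delta^{-2}$ from a maximum into the additive bracket (the paper does this last step via the identity $\max\{a,b\}=\tfrac12(|a+b|+|a-b|)$ rather than $\max\{a,b\}\le a+b$, but the outcome is identical).
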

\begin{proof}
Thanks to Lemma \ref{lemLM}, it is suffices to estimate $\delta^{-2}$. 
Since $\max\{ a, b\} = \{ |a+b| + |a-b| \}/2$ for $a$, $b \ge 0$, we have 
\begin{align*}
\delta^{-2} 
 &\le \dfrac{1}{4}\left( \left| \dfrac{2}{L} + \left(\dfrac{8 \| \partial_s (u + \phi) \|_{L^p(I)}}{\pi} \right)^{\tfrac{p}{p-1}} \right| 
                                  +  \left| \dfrac{2}{L} - \left(\dfrac{8 \| \partial_s (u + \phi) \|_{L^p(I)}}{\pi} \right)^{\tfrac{p}{p-1}} \right| \right)^2 \\
 &\le \dfrac{4}{L^2} + \left(\dfrac{8 \| \partial_s (u + \phi) \|_{L^p(I)}}{\pi} \right)^{\tfrac{2p}{p-1}}. 
\end{align*}
Recalling the definition of $\tilde{\lambda}_{i}$ we obtain the conclusion.
\end{proof}


Next we give a few lemmata that will prove useful for deriving  estimates.
\begin{lem} \label{Lem:1.1}
Let $p > 1$. Then there exists $c_{p}>0$ such that
\begin{align*}
\dfrac{1}{c_{p}} ( x^{p-1} + y^{p-1}) \le \dfrac{| x^{p} - y^{p} |}{|x-y|} \le c_{p}(x^{p-1} + y^{p-1}) \quad \text{for any} \quad x, y \ge 0. 
\end{align*} 
\end{lem}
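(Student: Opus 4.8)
The plan is to reduce this two-variable estimate to a one-variable question. Both $\frac{|x^p-y^p|}{|x-y|}$ and $x^{p-1}+y^{p-1}$ are symmetric in $(x,y)$ and positively homogeneous of degree $p-1$, so without loss of generality I would assume $x \ge y \ge 0$, interpreting the diagonal $x=y$ as the limiting value $p\,x^{p-1}$ (which trivially sits between fixed positive multiples of $x^{p-1}+y^{p-1}=2x^{p-1}$). For $x>y\ge 0$ the quotient equals $\frac{x^p-y^p}{x-y}$ since $t\mapsto t^p$ is increasing. Factoring out $x^{p-1}$ and setting $r=y/x\in[0,1)$, everything is governed by the single continuous function $h(r)=\frac{1-r^p}{1-r}$ compared against $1+r^{p-1}$ on $[0,1)$; both extend continuously to $r=1$ (with limits $p$ and $2$), so in principle a soft continuity-and-compactness argument already yields the two-sided bound. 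I would nonetheless prefer to exhibit explicit constants, as they come out with essentially no extra effort.

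For the upper bound I would invoke the mean value theorem: $\frac{x^p-y^p}{x-y}=p\,\xi^{p-1}$ for some $\xi\in(y,x)$. Since $p-1>0$, the map $t\mapsto t^{p-1}$ is increasing, so $\xi^{p-1}\le x^{p-1}\le x^{p-1}+y^{p-1}$, and the upper bound holds with $c_p=p$.

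The delicate step, which I expect to be the main obstacle, is the lower bound, precisely because the naive mean value estimate $\frac{x^p-y^p}{x-y}\ge p\,y^{p-1}$ degenerates as $y\to 0$ with $x$ fixed, while $x^{p-1}+y^{p-1}$ stays comparable to $x^{p-1}$. To repair this I would prove the sharper elementary inequality $\frac{x^p-y^p}{x-y}\ge x^{p-1}$, which after dividing by $x^p$ is equivalent to $1-r^p\ge 1-r$, i.e. $r^p\le r$ for $r\in[0,1)$ — true precisely because $p>1$. Combining with $y^{p-1}\le x^{p-1}$ gives $\frac{x^p-y^p}{x-y}\ge x^{p-1}\ge\frac12(x^{p-1}+y^{p-1})$, the lower bound with $c_p=2$. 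Taking $c_p=\max\{p,2\}$ then establishes both inequalities simultaneously and completes the proof.
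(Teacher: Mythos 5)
Your argument is correct. Note, however, that the paper does not actually prove this lemma at all: its ``proof'' consists solely of the citation ``See [FFLM, Lemma 3.14]'', so there is no internal argument to compare yours against. What you supply is therefore a genuine addition: a short, self-contained, elementary proof with an explicit constant $c_p=\max\{p,2\}$. Both halves check out. The upper bound via the mean value theorem, $\frac{x^p-y^p}{x-y}=p\,\xi^{p-1}\le p\,x^{p-1}$ for $\xi\in(y,x)$, is immediate. For the lower bound you correctly identify that the naive estimate $p\,y^{p-1}$ degenerates, and your fix $\frac{x^p-y^p}{x-y}\ge x^{p-1}$ reduces (after normalizing by $x^{p-1}$ and setting $r=y/x$) to $r^p\le r$ on $[0,1)$, which holds precisely because $p>1$; combined with $y^{p-1}\le x^{p-1}$ this gives the factor $\tfrac12$. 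The only point worth tidying is the diagonal $x=y$, where the quotient in the statement is formally $0/0$; your convention of reading it as the limit $p\,x^{p-1}$ (which lies between $\tfrac12$ and $p$ times $x^{p-1}+y^{p-1}=2x^{p-1}$ since $p>1$) is consistent with how the lemma is used in the paper, namely through the rearranged inequality $|x^p-y^p|\le c_p|x-y|(x^{p-1}+y^{p-1})$ in Remarks 2.1 and 2.2, which is trivially true at $x=y$. In short: correct, more informative than the paper's citation, and with explicit constants that the cited reference may not state.
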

\begin{proof} See \cite[Lemma 3.14]{FFLM}. 
\end{proof}
\begin{rem}\label{remOkabe}
From Lemma \ref{Lem:1.1} and H\"older's inequality, we obtain  for maps $h,g \in L^{p}(I)$ that 
\begin{align*}
\int_{I} \left| |h(s)|^{p} - |g(s)|^{p}\right| \, ds 
 & \le c_{p} \int_{I} |h(s) - g(s)| (|h(s)|^{p-1} + |g(s)|^{p-1}) \, ds \\
 & \le c_{p} \| h - g \|_{L^{p}(I)} (\| h \|_{L^{p}(I)}^{p-1} + \| g \|_{L^{p}(I)}^{p-1} ). 
\end{align*}
\end{rem}
\begin{rem}\label{remOkabe2}
From Lemma \ref{Lem:1.1} and H\"older's inequality, we obtain  for maps $h,g \in L^{p}(I) \cap L^{\infty}(I)$ that 
\begin{align*}
\int_{I} \left| |h(s)|^{p} - |g(s)|^{p}\right| \, ds 
 & \le c_{p} \int_{I} |h(s) - g(s)| (|h(s)|^{p-1} + |g(s)|^{p-1}) \, ds \\
 & \le c_{p}  \| h - g \|_{L^{1}(I)} (\| h \|_{L^{\infty}(I)}^{p-1} + \| g \|_{L^{\infty}(I)}^{p-1} )\\
 & \leq c_{p} \sqrt{L}  \| h - g \|_{L^{2}(I)} (\| h \|_{L^{\infty}(I)}^{p-1} + \| g \|_{L^{\infty}(I)}^{p-1} ). 
\end{align*}
\end{rem}
\begin{lem}\label{lemDB}
Let $p \geq 2$. Then for any $a$, $b \in \R^{m}$, $m \in \N$
\begin{align}
\langle |a|^{p-2} a- |b|^{p-2}b, a-b \rangle \geq C |a-b|^{p}
\end{align}
where $C$ depends only upon $m$ and $p$.
\end{lem}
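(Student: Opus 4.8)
The plan is to realize the left-hand side as the integral of a nonnegative quadratic form along the segment joining $b$ to $a$, and then to bound that integral from below. Write $h := a-b$ and $\xi(t) := b + t h$ for $t \in [0,1]$, and set $V(\xi) := |\xi|^{p-2}\xi$. For $p \geq 2$ the map $V$ is locally Lipschitz on $\R^{m}$, and the segment $\xi([0,1])$ meets the origin in at most one point, so $t \mapsto V(\xi(t))$ is absolutely continuous and the fundamental theorem of calculus gives
\begin{align*}
\langle |a|^{p-2}a - |b|^{p-2}b, a-b\rangle = \int_0^1 \langle DV(\xi(t))\, h, h\rangle \, dt,
\end{align*}
where, for $\xi \neq 0$,
\begin{align*}
DV(\xi) = |\xi|^{p-2} I + (p-2)|\xi|^{p-4}\, \xi \otimes \xi.
\end{align*}

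The next step exploits the sign of $p-2$. For $p \geq 2$ the rank-one summand $(p-2)|\xi|^{p-4}\,\xi\otimes\xi$ is positive semidefinite, so $\langle DV(\xi)h, h\rangle \geq |\xi|^{p-2}|h|^2$, and therefore
\begin{align*}
\langle |a|^{p-2}a - |b|^{p-2}b, a-b\rangle \geq |h|^2 \int_0^1 |\xi(t)|^{p-2}\, dt.
\end{align*}
It then remains to establish the one-dimensional estimate $\int_0^1 |\xi(t)|^{p-2}\, dt \geq c_p\, |h|^{p-2}$ for some $c_p>0$ depending only on $p$; combined with the previous display this yields the claim with $C = c_p$. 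To prove it I would use that $\ell(t) := |\xi(t)|$ is $|h|$-Lipschitz (since $|\ell(t_2)-\ell(t_1)| \le |t_2-t_1|\,|h|$) and that, by the triangle inequality, $\ell(0)+\ell(1) = |b|+|a| \geq |a-b| = |h|$, so at least one endpoint satisfies $\ell \geq |h|/2$. Assuming for instance $\ell(1)\geq |h|/2$ (the other case being symmetric), the Lipschitz bound gives $\ell(t) \geq |h|/2 - (1-t)|h| \geq |h|/4$ for $t\in[3/4,1]$, whence
\begin{align*}
\int_0^1 |\xi(t)|^{p-2}\, dt \geq \int_{3/4}^1 \left(\frac{|h|}{4}\right)^{p-2}\, dt = 4^{1-p}\,|h|^{p-2},
\end{align*}
so one may take $C = 4^{1-p}$.

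The only genuinely delicate point I anticipate is the justification of the fundamental-theorem step when the segment passes through or near the origin: for $2\le p<3$ the Jacobian $DV$ is unbounded near $0$. This is harmless, however, because the operator norm of $|\xi|^{p-4}\,\xi\otimes\xi$ is $|\xi|^{p-2}$, which stays bounded for $p\ge 2$, and the single parameter value (if any) with $\xi(t)=0$ is of measure zero, so the integrand is integrable and the identity is valid. As an alternative one could bypass the Jacobian altogether and derive the inequality from the strong convexity of $\xi\mapsto \tfrac1p|\xi|^p$, but the route above is the most direct and already delivers an explicit constant depending only on $p$ (and a fortiori on $m$ and $p$, as claimed).
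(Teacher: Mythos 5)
Your proof is correct. The paper does not prove this lemma itself --- it only cites \cite[I, Lemma~4.4]{DiBenedetto} --- and your argument (integral representation of $|a|^{p-2}a-|b|^{p-2}b$ along the segment, positive semidefiniteness of the rank-one part of $DV$ for $p\ge 2$, and the elementary lower bound $\int_0^1|b+t(a-b)|^{p-2}\,dt\ge 4^{1-p}|a-b|^{p-2}$) is essentially the standard proof given in that reference, here with an explicit constant independent of $m$. The only quibble is your remark that $DV$ is ``unbounded near $0$'' for $2\le p<3$: since both summands of $DV(\xi)$ have operator norm comparable to $|\xi|^{p-2}$, the Jacobian is in fact bounded near the origin for every $p\ge 2$, as your own subsequent sentence effectively shows, so the worry is vacuous and the argument stands.
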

\begin{proof}
See for instance \cite[I, Lemma~4.4]{DiBenedetto}.
\end{proof}
\begin{lem}\label{Lindquist}
Let $a$, $b \in \R^{m}$ and $p \geq 2$. Then
\begin{align*}
\langle |a|^{p-2} a- |b|^{p-2}b, a-b \rangle &\geq \frac{1}{2} (|b|^{p-2}+ |a|^{p-2}) |a-b|^{2},\\
\Big| |a|^{p-2} a- |b|^{p-2}b \Big| &\leq (p-1) |a-b| \int_{0}^{1} |a+t (b-a)|^{p-2} dt.
\end{align*}
\end{lem}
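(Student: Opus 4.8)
The plan is to derive both estimates from the single vector field $f:\R^m\to\R^m$, $f(x):=\av{x}^{p-2}x$, together with the line-integral representation of $f(a)-f(b)$ along the segment joining the two points. For $x\neq 0$ a direct differentiation gives the Jacobian
\[
Df(x)=\av{x}^{p-2}\,\mathrm{Id}+(p-2)\av{x}^{p-4}\,x\otimes x,
\]
where $x\otimes x=(x_ix_j)$ (and $Df(0)=0$ when $p>2$, while $f\equiv\mathrm{Id}$ when $p=2$). Since $p\geq 2$ the map $f$ is locally Lipschitz, so by the fundamental theorem of calculus
\[
f(a)-f(b)=\int_0^1 Df\bigl(b+t(a-b)\bigr)\,(a-b)\,dt,
\]
the integrand being defined for a.e.\ $t$ (it can fail to exist only at the at most one $t$ for which the segment meets the origin, a null set). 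This representation drives the second inequality; the first I would handle by a shorter direct argument.

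For the first inequality I would expand both sides algebraically. In $\Ip{\av{a}^{p-2}a-\av{b}^{p-2}b}{a-b}$ and in $\tfrac12(\av{a}^{p-2}+\av{b}^{p-2})\av{a-b}^2$ the mixed terms $(\av{a}^{p-2}+\av{b}^{p-2})\Ip{a}{b}$ cancel, and the claim reduces to
\[
\av{a}^p+\av{b}^p\geq \av{a}^{p-2}\av{b}^2+\av{b}^{p-2}\av{a}^2,
\]
i.e., writing $\alpha=\av{a}$ and $\beta=\av{b}$, to $(\alpha^2-\beta^2)(\alpha^{p-2}-\beta^{p-2})\geq 0$. This is immediate because for $p\geq 2$ the function $t\mapsto t^{p-2}$ is nondecreasing on $[0,\infty)$, so the two factors carry the same sign. (Alternatively one could contract the integral representation with $a-b$ and discard the nonnegative $(p-2)$-term, but the algebraic route is cleaner here.)

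For the second inequality I would use the representation parametrised from $a$ to $b$, i.e.\ with $y(t)=a+t(b-a)$, so that $f(b)-f(a)=\int_0^1 Df(y(t))(b-a)\,dt$. The crucial ingredient is a bound on the operator norm of $Df(x)$. As $Df(x)$ is symmetric with eigenvalue $\av{x}^{p-2}$ on the hyperplane $x^\perp$ and eigenvalue $\av{x}^{p-2}+(p-2)\av{x}^{p-2}=(p-1)\av{x}^{p-2}$ in the direction of $x$, and since $p-1\geq 1$ makes the latter the largest, one gets $\av{Df(x)\xi}\leq (p-1)\av{x}^{p-2}\av{\xi}$ for every $\xi$. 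Pulling the norm inside the integral and using $\av{f(a)-f(b)}=\av{f(b)-f(a)}$ yields exactly
\[
\av{\av{a}^{p-2}a-\av{b}^{p-2}b}\leq (p-1)\av{a-b}\int_0^1\av{a+t(b-a)}^{p-2}\,dt.
\]

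The only genuinely delicate point is the behaviour at the origin: when $p>2$ the factor $\av{x}^{p-4}$ is singular, so I would justify the integral representation via local Lipschitz continuity of $f$ (absolute continuity of $t\mapsto f(y(t))$) rather than pointwise differentiability, observing that the singular direction still contributes only $\av{x}^{p-2}\to 0$ and that the segment meets the origin on a null set. Everything else is the routine eigenvalue computation and the sign argument, both of which use $p\geq 2$ in an essential way — the operator-norm bound needs $p-1\geq 1$, and the sign argument needs monotonicity of $t\mapsto t^{p-2}$.
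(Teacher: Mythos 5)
Your proof is correct: the algebraic expansion reduces the first inequality to $(|a|^2-|b|^2)\,(|a|^{p-2}-|b|^{p-2})\ge 0$, which holds by monotonicity of $t\mapsto t^{p-2}$ for $p\ge 2$, and the operator-norm bound $\|Df(x)\|\le (p-1)|x|^{p-2}$ for $f(x)=|x|^{p-2}x$, fed into the fundamental theorem of calculus along the segment, gives the second; your handling of the origin is also fine, since for $p>2$ one has $\|Df(x)\|\to 0$ as $x\to 0$, so $f$ is in fact $C^1$ and the integral representation needs no exceptional-set argument. The paper offers no proof of its own --- it simply cites Lindqvist's notes --- and your argument is essentially the standard one found there (the same pointwise identity for the monotonicity estimate and the same Jacobian/line-integral bound for the Lipschitz-type estimate), so there is nothing substantive to compare.
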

\begin{proof}
See  the beautiful notes \cite{Lindqvist} by P. Lindqvist. 
\end{proof}
\begin{lem}\label{Hilfsatz1}
Let $u \in W^{1,p}(I) $ for some $1\leq p < \infty$. Then
\begin{align*}
\|u \|_{L^{p}(I)} \leq C ( \| u_{s} \|_{L^{p}(I)} + \| u \|_{L^{2}}),
\end{align*}
where $C=C(I,p)$.
\end{lem}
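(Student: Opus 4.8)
The plan is to prove this Poincar\'e--Wirtinger type inequality directly, by splitting $u$ into its mean value and its oscillation. Since $u \in W^{1,p}(I)$ embeds into $C(\bar I)$ (as already recalled in this section), $u$ admits a continuous representative, so it lies in $L^2(I)$ (being bounded on the compact interval $\bar I$) and the fundamental theorem of calculus applies pointwise. Write $\bar u := \frac{1}{L}\int_I u \, dt$ for the average of $u$ over $I=(0,L)$, and decompose $u = \bar u + (u - \bar u)$.

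First I would control the oscillation. For $s,t \in \bar I$ one has $u(s) - u(t) = \int_t^s u_\sigma \, d\sigma$, whence, averaging in $t$,
$$
|u(s) - \bar u| = \frac{1}{L}\left| \int_I \int_t^s u_\sigma \, d\sigma \, dt \right| \le \int_I |u_\sigma| \, d\sigma \le L^{1-\frac{1}{p}} \|u_s\|_{L^p(I)}
$$
by H\"older's inequality. Since the right-hand side is independent of $s$, taking the $L^p(I)$ norm in $s$ gives $\|u - \bar u\|_{L^p(I)} \le L \|u_s\|_{L^p(I)}$.

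Next I would control the mean. As $\bar u$ is constant, $\|\bar u\|_{L^p(I)} = L^{\frac{1}{p}} |\bar u|$, and Cauchy--Schwarz yields $|\bar u| \le \frac{1}{L}\|u\|_{L^1(I)} \le L^{-\frac{1}{2}} \|u\|_{L^2(I)}$, so that $\|\bar u\|_{L^p(I)} \le L^{\frac{1}{p}-\frac{1}{2}} \|u\|_{L^2(I)}$. Combining the two estimates by the triangle inequality $\|u\|_{L^p(I)} \le \|u - \bar u\|_{L^p(I)} + \|\bar u\|_{L^p(I)}$ gives the claim with $C = \max\{L,\, L^{\frac{1}{p}-\frac{1}{2}}\}$, which depends only on $I$ and $p$.

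There is no serious obstacle here, as the estimate is elementary; the only points requiring a little care are the use of the continuous representative to justify the pointwise identity from the fundamental theorem of calculus, and the bookkeeping of the powers of $L$ so that the final constant genuinely depends on $L$ and $p$ alone. An alternative, less quantitative route would be a compactness/contradiction argument using the Rellich embedding $W^{1,p}(I) \hookrightarrow\hookrightarrow L^p(I)$: if the inequality failed there would be a sequence $u_n$ with $\|u_n\|_{L^p(I)}=1$ but $\|u_{n,s}\|_{L^p(I)} + \|u_n\|_{L^2(I)} \to 0$, a subsequence of which would converge in $L^p(I)$ to a constant vanishing in $L^2(I)$, hence to $0$, contradicting the normalization. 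The direct argument is preferable, however, since it produces an explicit constant.
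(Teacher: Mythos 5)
Your proof is correct and follows essentially the same route as the paper: decompose $u$ into its mean $u_I$ plus the oscillation $u-u_I$, bound the oscillation by $|I|\,\|u_s\|_{L^p(I)}$ (the Poincar\'e--Wirtinger inequality, which the paper simply cites and you prove directly via the fundamental theorem of calculus and H\"older), and bound the mean by $|I|^{\frac1p-\frac12}\|u\|_{L^2(I)}$ via Cauchy--Schwarz. The constants you obtain coincide with those in the paper's one-line estimate.
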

\begin{proof}
By embedding theory $u \in C^{0}(\bar{I})$, therefore in particular $u \in L^{2}(I)$. For the estimate we use Poincare-inequality as follows
\begin{align*}
\| u \|_{L^{p}(I)} \leq \| u -u_{I} \|_{L^{p}(I)} + \|u_{I} \|_{L^{p}(I)} \leq |I| \| u_{s} \|_{L^{p}(I)} + |I|^{\frac{1}{p}-\frac{1}{2}} \| u \|_{L^{2}(I)},
\end{align*}
where $u_{I}:= \frac{1}{|I|} \int_{I} u ds$.
\end{proof}

\section{Short time existence via minimizing movements}
\subsection{Discretization procedure}\label{sec:3.1}

We prove  the existence of local-in-time weak solution of \eqref{eq:P} via minimizing movements.
Let $u_0 \in  W^{1,p}_{\rm per}(I)$ and set $T>0$, $n \in \N$, and $\tau_n = T/n$. 
We define a family of maps $\{ u_{i,n} \}^{n}_{i=0} \subset W^{1,p}_{\rm per}(I)$ inductively by making use of a minimization problem. 
Set $u_{0,n}= u_0$. 
For each $i \in \{1, 2, \ldots, n\}$, we consider the following variational problem: 
\begin{align} \label{Min} \tag{M$_{i,n}$}
\min\{ G_{i,n}(u) \mid u \in  W^{1,p}_{\rm per}(I) \} 
\end{align}
with 
\begin{align*}
G_{i,n}(u) := F_{p}(u) + P_{i,n}(u) + H_{i,n}(u),  
\end{align*}
where 
\begin{align}
\label{FpU}
F_{p}(u):& = F_{p}(\theta) = \frac{1}{p} \int_{I} |u_{s}+ \phi_{s}|^{p} \,ds, \\
P_{i,n}(u) &:= \dfrac{1}{2 \tau_{n}} \int_{I} | u(s) - u_{i-1,n}(s) |^{2}  \, ds,\\
L_{i,n}(u) &:= \tilde{\lambda}_{1}(u_{i-1,n}) \int_{I} \cos (u + \phi) ds + \tilde{\lambda}_{2}(u_{i-1,n}) \int_{I} \sin (u + \phi) ds, \\
H_{i,n}(u) & := L_{i,n}(u) - L_{i,n}(u_{i-1,n}).
\end{align}
Here $\tilde{\lambda}_{i}$, $i=1,2$ are  given in \eqref{eq:P}.
Note that
\begin{align} \label{notuccia} P_{i,n} (u_{i-1,n}) =0=H_{i,n}(u_{i-1,n}). \end{align}
Moreover note that for $\varphi \in W^{1,p}_{\rm per}(I)$ the first variation is given by
\begin{align} \label{firstvar}
&\frac{d}{d\epsilon}\Big|_{\epsilon=0} G_{i,n}(u + \epsilon \varphi) =   \\
&\quad=\int_{I} \frac{(u(s)-u_{i-1,n}(s))}{\tau_{n}} \varphi(s) ds -\tilde{\lambda}_{1}(u_{i-1,n}) \int_{I} \sin (u + \phi) \varphi(s) ds \nonumber \\
&\quad \quad + \tilde{\lambda}_{2}(u_{i-1,n}) \int_{I} \cos (u + \phi) \varphi(s) ds 
  +\int_{I} \left( \left|u_{s} +  2\pi \frac{\eta}{L}\right|^{p-2} (u_{s} +  2\pi \frac{\eta}{L}) \right ) \vp_{s} ds,
\nonumber
\end{align}
with the convention that $|0|^{p-2}0=0$ for any $1<p<\infty$.
\begin{thm}\label{thm:3.1}
Let $1<p<\infty$. The minimization problem \eqref{Min} 
admits a solution $u_{i,n} \in W^{1,p}_{\rm per}(I)$.
\end{thm}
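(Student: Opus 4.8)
The plan is to apply the direct method of the calculus of variations. First I would record that $G_{i,n}$ is bounded from below. Both $F_p(u) \ge 0$ and $P_{i,n}(u) \ge 0$, while the remaining term $H_{i,n}(u) = L_{i,n}(u) - L_{i,n}(u_{i-1,n})$ is \emph{uniformly} bounded in $u$: the numbers $\tilde{\lambda}_1(u_{i-1,n})$, $\tilde{\lambda}_2(u_{i-1,n})$ are fixed (they depend only on the already-constructed $u_{i-1,n}$, and are finite by Lemma~\ref{lem4.2}), and $|\int_I \cos(u+\phi)\,ds| \le L$, $|\int_I \sin(u+\phi)\,ds| \le L$, so $|H_{i,n}(u)| \le C_{i,n}$ with $C_{i,n}$ independent of $u$. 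Hence $G_{i,n}(u) \ge -C_{i,n}$ and $m := \inf\{ G_{i,n}(u) \mid u \in W^{1,p}_{\rm per}(I) \}$ is finite. I would then fix a minimizing sequence $(u_k)_k \subset W^{1,p}_{\rm per}(I)$ with $G_{i,n}(u_k) \to m$.

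The key step is coercivity in the full $W^{1,p}$ norm, which I would obtain from the interplay of $F_p$ and $P_{i,n}$. The derivative is controlled by $F_p$, since $(pF_p(u_k))^{1/p} = \| u_{k,s} + \phi_s \|_{L^p(I)} \ge \| u_{k,s} \|_{L^p(I)} - \| \phi_s \|_{L^p(I)}$ bounds $\| u_{k,s} \|_{L^p(I)}$; the function itself is controlled by $P_{i,n}$, since $(2\tau_n P_{i,n}(u_k))^{1/2} = \| u_k - u_{i-1,n} \|_{L^2(I)} \ge \| u_k \|_{L^2(I)} - \| u_{i-1,n} \|_{L^2(I)}$ bounds $\| u_k \|_{L^2(I)}$. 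Feeding both bounds into Lemma~\ref{Hilfsatz1}, namely $\| u_k \|_{L^p(I)} \le C(\| u_{k,s} \|_{L^p(I)} + \| u_k \|_{L^2(I)})$, yields a uniform bound on $\| u_k \|_{W^{1,p}(I)}$.

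Since $1 < p < \infty$, the space $W^{1,p}(I)$ is reflexive, so after passing to a (non-relabelled) subsequence $u_k \rightharpoonup u$ weakly in $W^{1,p}(I)$; by the compact embedding $W^{1,p}(I) \hookrightarrow C^0(\bar{I})$ I may further assume $u_k \to u$ uniformly on $\bar{I}$. Uniform convergence preserves the periodicity $u_k(0) = u_k(L)$, so $u \in W^{1,p}_{\rm per}(I)$ is admissible. To pass to the limit I would use: the functional $F_p$ is convex in $u_s$ (as $t \mapsto |t|^p$ is convex) and continuous, hence sequentially weakly lower semicontinuous, giving $F_p(u) \le \liminf_k F_p(u_k)$; the penalization $P_{i,n}$ is continuous along $L^2$-convergence, which follows from the uniform convergence, so $P_{i,n}(u_k) \to P_{i,n}(u)$; and since $\cos(u_k+\phi) \to \cos(u+\phi)$, $\sin(u_k+\phi) \to \sin(u+\phi)$ uniformly, the integrals defining $L_{i,n}$ converge, whence $H_{i,n}(u_k) \to H_{i,n}(u)$. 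Combining, $G_{i,n}(u) \le \liminf_k G_{i,n}(u_k) = m$, so $u =: u_{i,n}$ is the desired minimizer.

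The main obstacle is precisely the coercivity: neither $F_p$ alone (which controls only $\| u_s \|_{L^p(I)}$) nor $P_{i,n}$ alone (which controls only $\| u \|_{L^2(I)}$) is coercive on $W^{1,p}_{\rm per}(I)$, and the indefinite sign of $H_{i,n}$ has to be neutralised by its uniform boundedness; it is the combination of the two leading terms through Lemma~\ref{Hilfsatz1} that produces a bound in the genuine $W^{1,p}$ norm. Everything else is the standard lower-semicontinuity machinery, the only mild care being that the weak-$W^{1,p}$ together with uniform convergence is strong enough to handle the nonlinear lower-order terms $\cos(u+\phi)$, $\sin(u+\phi)$, which it is.
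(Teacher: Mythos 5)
Your proposal is correct and follows essentially the same route as the paper: both establish the lower bound on $G_{i,n}$ via the uniform bound on $H_{i,n}$ (from Lemma~\ref{lem4.2}), extract coercivity in $W^{1,p}_{\rm per}(I)$ by combining the $L^p$-bound on $u_s$ from $F_p$ with the $L^2$-bound on $u$ from $P_{i,n}$ through Lemma~\ref{Hilfsatz1}, and then conclude by weak compactness, uniform convergence from the compact embedding, and weak lower semicontinuity. Your write-up is if anything slightly more explicit than the paper's about how the uniform convergence handles the nonlinear lower-order terms and preserves periodicity, but the argument is the same.
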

\begin{proof}
Note that by Lemma~\ref{lem4.2} the functional $G_{i,n}$ is well defined and bounded from below (since $|L_{i,n}(u)| \leq C$, with $C=C(\|(u_{i-1,n})_{s}\|_{L^{p}(I)})$).
Let $(u_{j})_{j \in \mathbb{N}} \subset W^{1,p}_{\rm per}(I)$ be a minimizing sequence for the problem \eqref{Min}. Then there exists a contant $C >0$ such that $G_{i,n}(u_{j}) \leq C$ for all $j$. From the boundedness of $F_{p}(u_{j})$ and $P_{i,n}(u)$ and  we infer that $\| (u_{j})_{s} \|_{L^{p}(I)} + \|u_{j} \|_{L^{2}(I)} \leq C$ for all $j$. 
Lemma~\ref{Hilfsatz1} yields that $\| u_{j}\|_{L^{p}(I)} \leq C$. Thus  $\| u_{j} \|_{W^{1,p}(I)} \leq C$ for all $j$.
By embedding theory and standard arguments we infer the existence of   $u \in W^{1,p}_{\rm per}(I) \cap C^{0, \alpha}(I)$ with $\alpha= 1-\frac{1}{p}$ such that (passing to a subsequence) we have
\begin{align} \label{eq:3.5}
u_{j} \rightharpoonup u \quad \text{weakly in} \quad W^{1,p}(I)
\end{align}
and
\begin{align} \label{eq:3.6}
u_{j} \rightrightarrows u \quad \text{ with } u \in   C^{0,\alpha}(I) .
\end{align}
Finally we show that $u$ is a minimizer by using the property of lower semicontinuity of the $L^{p}$-norm,  \eqref{eq:3.5} and \eqref{eq:3.6}, namely 
\begin{align*}
\liminf_{j \to \infty} G_{i,n}(u_{j}) 
  = \liminf_{j \to \infty} \left[ \frac{1}{p} \| \partial_{s} u_{j} +\phi_{s}\|^{p}_{L^{p}(I)} + P_{i,n}(u_{j}) + H_{i,n}(u_{j})\right] 
  \ge G_{i,n}(u). 
\end{align*}
\end{proof}
Note that at this point there is no need to investigate uniqueness of the solution  of the minimization problem above. Indeed one can not obtain the uniqueness of solution to \eqref{eq:P} from the minimizing movements method, even if the solution of \eqref{Min} is unique: this is due to the fact that we pass to subsequences in the approximation procedure.

From now on, we denote by $V_{i,n}$ the discrete velocity, that is  
\begin{align*}
V_{i,n}(s) := \dfrac{u_{i,n}(s) - u_{i-1,n}(s)}{\tau_{n}}. 
\end{align*}

\begin{dfn} \label{definition:2.4}
Let $u_{n} : I \times [\,0, T\,] \to \R$ be defined by 
\begin{align*}
u_{n}(s,t) := u_{i-1,n}(s) + ( t - (i-1) \tau_{n}) V_{i,n}(s)
\end{align*} 
if $(s,t) \in I \times [\,(i-1) \tau_{n}, i \tau_{n}\,]$ for $i=1, \ldots, n$. 
\end{dfn}
Thus $u_{n}$ denotes the piecewise linear interpolation of $\{ u_{i,n} \}$.  It is useful to consider also  piecewise constant versions $\tilde{u}_{n}$ and $\tilde{U}_{n}$ of $\{ u_{i,n} \}$, and this is given in the following definition.

\begin{dfn} \label{definition:2.5}
Let $\tilde{u}_{n} : I \times (\,0, T\,] \to \R$ and $V_{n} : I \times (\,0, T\,] \to \R$ be defined by 
\begin{align*}
\tilde{u}_{n}(s,t) &:= u_{i,n}(s), \\
\tilde{U}_{n}(s,t) & := u_{i-1,n}(s),  \\
V_{n}(s,t) &:= V_{i,n}(s), 
\end{align*} 
if $(s,t) \in I \times (\,(i-1) \tau_{n}, i \tau_{n}\,]$ for $i=1, \ldots, n$. 
\end{dfn}


\subsubsection{Uniform bounds for approximating functions ($1<p<\infty$)}

In the following theorem we derive uniforms bounds for the solutions of \eqref{Min}.

\begin{thm} \label{thm:3.2}
Let $1<p<\infty$.
Given a initial data $u_{0}\in W^{1,p}_{\rm per}(I)$, set 
\begin{align}
c_{*}&:=2 \|(u_{0}+\phi)_{s} \|_{L^{p}}^{p},\\
c_{1}&:=8 L c_{*} \left( \dfrac{4}{L^2} + \left(\dfrac{8 c_{*}}{\pi} \right)^{\tfrac{2p}{p-1}} \right) .
\end{align}
Let $T=T(p,u_{0},L)>0$ be such that
$$T \leq \frac{c_{*}}{8pLc_{1}^{2}} .$$
Let $u_{i,n}$ be the solution of \eqref{Min} obtained by Theorem {\rm \ref{thm:3.1}}. 
Then, for each $n \in \N$, we have 
\begin{gather}
\sup_{1 \le i \le n} \| ( u_{i,n} +\phi)_{s} \|^{p}_{L^{p}(I)} \le p F_{p}(u_{0}) +  pT L 4c_{1}^{2} \leq c_{*} , \label{eq:3.8}\\
\int^{T}_{0} \int_{I} | V_{n}(s,t) |^{2} \, ds dt \le 4 (F_{p}(u_{0}) +T L4c_{1}^{2}) \leq \frac{4}{p} c_{*}, \label{eq:3.7} \\
\sup_{1 \le i \le n} \| u_{i,n} \|_{L^{2}(I)} \leq \|u_{0} \|_{L^{2}(I)}+(4T ( F_{p}(u_{0}) +T L 4 c_{1}^{2}))^{1/2} \leq\|u_{0} \|_{L^{2}(I)}+ \sqrt{\frac{c_{*}^{2}}{2Lp^{2}c_{1}^{2}} } \label{eq:3.8L2},\\
\label{lemconti}
|\tilde{\lambda}_{r}(u_{j-1,n}  )| \leq c_{1}, \qquad r=1,2, \qquad j =1, \ldots, n.
\end{gather}
\end{thm}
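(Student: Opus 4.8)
The plan is to argue by induction on $i$, showing that the a priori bound $\|(u_{i,n}+\phi)_s\|_{L^p(I)}^p \le c_*$ propagates from one time step to the next, provided $T$ is small; the base case is trivial since $\|(u_0+\phi)_s\|_{L^p(I)}^p = c_*/2 \le c_*$. The engine is the minimality of $u_{i,n}$: as $u_{i-1,n}$ is an admissible competitor for \eqref{Min}, we have $G_{i,n}(u_{i,n}) \le G_{i,n}(u_{i-1,n})$, and by \eqref{notuccia} the right-hand side equals $F_{p}(u_{i-1,n})$. Writing $P_{i,n}(u_{i,n}) = \frac{\tau_n}{2}\|V_{i,n}\|_{L^2(I)}^2$, this rearranges to the one-step estimate
$$F_{p}(u_{i,n}) + \frac{\tau_n}{2}\|V_{i,n}\|_{L^2(I)}^2 \le F_{p}(u_{i-1,n}) - H_{i,n}(u_{i,n}).$$

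The crux is to control the Lagrange-multiplier contribution $H_{i,n}(u_{i,n})$. Since $\sin$ and $\cos$ are $1$-Lipschitz, each of the two integral differences appearing in $H_{i,n}(u_{i,n}) = L_{i,n}(u_{i,n}) - L_{i,n}(u_{i-1,n})$ is bounded by $\int_I |u_{i,n}-u_{i-1,n}|\,ds \le \sqrt{L}\,\tau_n\|V_{i,n}\|_{L^2(I)}$ via Cauchy--Schwarz. Invoking the induction hypothesis $\|(u_{i-1,n}+\phi)_s\|_{L^p(I)}^p \le c_*$, Lemma~\ref{lem4.2} gives $|\tilde{\lambda}_r(u_{i-1,n})| \le c_1$ (this is exactly how $c_1$ is defined), whence $|H_{i,n}(u_{i,n})| \le 2c_1\sqrt{L}\,\tau_n\|V_{i,n}\|_{L^2(I)}$. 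A Young inequality then absorbs the velocity, namely $2c_1\sqrt{L}\,\tau_n\|V_{i,n}\|_{L^2(I)} \le \frac{\tau_n}{4}\|V_{i,n}\|_{L^2(I)}^2 + 4c_1^2 L\,\tau_n$, yielding the clean recursion
$$F_{p}(u_{i,n}) + \frac{\tau_n}{4}\|V_{i,n}\|_{L^2(I)}^2 \le F_{p}(u_{i-1,n}) + 4c_1^2 L\,\tau_n.$$

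Summing this telescoping inequality over $j=1,\dots,i$ and using $i\tau_n \le T$ gives $F_{p}(u_{i,n}) + \frac{\tau_n}{4}\sum_{j=1}^i\|V_{j,n}\|_{L^2(I)}^2 \le F_{p}(u_0) + 4c_1^2 LT$. Since $c_* = 2pF_{p}(u_0)$, the hypothesis $T \le c_*/(8pLc_1^2)$ forces $pF_{p}(u_0) + 4pc_1^2 LT \le c_*/2 + c_*/2 = c_*$, which both closes the induction and proves \eqref{eq:3.8}; then \eqref{lemconti} is immediate from Lemma~\ref{lem4.2}. Taking $i=n$ and recalling that $\int_0^T\int_I |V_n|^2\,ds\,dt = \sum_{j=1}^n \tau_n\|V_{j,n}\|_{L^2(I)}^2$ gives \eqref{eq:3.7}, with the bound $\frac{4}{p}c_*$ following from the same smallness of $T$. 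Finally, for \eqref{eq:3.8L2} I would write $u_{i,n} = u_0 + \sum_{j=1}^i \tau_n V_{j,n}$ and estimate $\sum_{j} \tau_n\|V_{j,n}\|_{L^2(I)} \le (i\tau_n)^{1/2}\big(\sum_{j} \tau_n\|V_{j,n}\|_{L^2(I)}^2\big)^{1/2}$ by Cauchy--Schwarz, then insert \eqref{eq:3.7}.

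The main obstacle is the apparent circularity: the bound on $\tilde{\lambda}_r$ needed to control $H_{i,n}$ itself depends, through Lemma~\ref{lem4.2}, on the very quantity $\|(u_{i-1,n}+\phi)_s\|_{L^p(I)}$ that we are trying to bound. This is precisely why $c_1$ must be frozen at the threshold level $c_*$ and why the smallness condition on $T$ is imposed: the energy increases by at most $4c_1^2 LT$ in total, and this increment is calibrated to keep the iterate below the threshold $c_*$ at which the multiplier bound was assumed, thereby allowing the induction to close.
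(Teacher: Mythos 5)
Your proposal is correct and follows essentially the same route as the paper: induction on the time step with the threshold $c_*$ frozen in the multiplier bound, minimality of $u_{i,n}$ against the competitor $u_{i-1,n}$, the Lipschitz/Cauchy--Schwarz/Young estimate of $H_{i,n}(u_{i,n})$ absorbing half of the penalty term, telescoping, and the same Cauchy--Schwarz argument for the $L^2$ bound. The constants and the calibration of $T$ match the paper's proof exactly.
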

\begin{proof}
The proof follows by an induction argument. Fix $i \in \{1, 2, \ldots, n\}$ arbitrarily and assume that $\|  (u_{j,n}+\phi)_{s} \|^p_{L^p(I)} \le c_{*}$ for all $0 \leq j \leq i-1$. 
Then it follows from Lemma \ref{lem4.2} that 
\begin{align} \label{eq:3.11b}
| \tilde{\lambda}_r(u_{j,n})| \le c_1 \quad \text{for} \quad j=1, 2, \quad 0\leq j \leq i-1. 
\end{align}
Next, note that
\begin{align}\label{calcH}
|H_{i,n} (u_{i,n})| &= |L_{i,n} (u_{i,n}) -L_{i,n} (u_{i-1,n})| \\
& \leq |\tilde{\lambda}_{1} (u_{i-1,n})| \int_{I} |\cos (u_{i,n} + \phi) - \cos (u_{i-1,n} + \phi)| ds \notag\\
& \qquad + |\tilde{\lambda}_{2} (u_{i-1,n})| \int_{I} |\sin (u_{i,n} + \phi) - \sin (u_{i-1,n} + \phi)| ds \notag\\
& \leq (|\tilde{\lambda}_{1} (u_{i-1,n})| + |\tilde{\lambda}_{2} (u_{i-1,n})|) \int_{I} |u_{i,n} -u_{i-1,n}| ds \notag\\
& \leq \sqrt{L}(|\tilde{\lambda}_{1} (u_{i-1,n})| + |\tilde{\lambda}_{2} (u_{i-1,n})|) \| u_{i,n} -u_{i-1,n}\|_{L^{2}}\notag\\
& \leq \frac{1}{2} P_{i,n} (u_{i,n}) + \tau_{n} L (|\tilde{\lambda}_{1} (u_{i-1,n})| + |\tilde{\lambda}_{2} (u_{i-1,n})|)^{2}. \notag
\end{align}

Since $u_{i,n}$ is a minimizer of \eqref{Min}, we have by using \eqref{notuccia} that 
\begin{align} \label{eq:3.9}
G_{i,n}(u_{i,n}) \le G_{i,n}(u_{i-1,n}) = F_{p}(u_{i-1,n}). 
\end{align}
This implies that $$ F_{p}(u_{i,n}) + P_{i,n} (u_{i,n})-| H_{i,n}(u_{i,n})|\leq F_{p}(u_{i,n}) + P_{i,n} (u_{i,n})+ H_{i,n}(u_{i,n}) \le F_{p}(u_{i-1,n}) $$ for each $i=1, \ldots, n$, 
so that \eqref{calcH} yields
\begin{align} \label{eq:3.11}
F_p(u_{i,n}) \le F_{p}(u_{i,n}) +\frac{1}{2} P_{i,n} (u_{i,n}) \le F_p(u_{i-1,n}) +\tau_{n} L (|\tilde{\lambda}_{1} (u_{i-1,n})| + |\tilde{\lambda}_{2} (u_{i-1,n})|)^{2} 
\end{align}
for each $i =  1, \ldots, n$.
From \eqref{eq:3.11} and \eqref{eq:3.11b}
we infer that indeed 
\begin{align} 
F_p(u_{i,n})  \le F_p(u_0) +\frac{iT}{n} L 4c_{1}^{2} \quad \text{for each} \quad i =  1, \ldots, n, 
\end{align}
This gives \eqref{eq:3.8} and \eqref{lemconti}.
Next we observe that  \eqref{eq:3.11} and  \eqref{lemconti} give
\begin{align*}\frac{1}{2} P_{i,n}(u_{i,n}) &\leq  F_{p}(u_{i-1,n}) - F_{p}(u_{i,n}) + \tau_{n} L (|\tilde{\lambda}_{1} (u_{i-1,n})| + |\tilde{\lambda}_{2} (u_{i-1,n})|)^{2}\\
& \leq  F_{p}(u_{i-1,n}) - F_{p}(u_{i,n}) + \tau_{n}L 4 c_{1}^{2}.
\end{align*}  
Thus we obtain   
\begin{align} \label{eq:3.10}
\frac{1}{2}P_{i,n}(u_{i,n})=  \dfrac{\tau_{n}}{4} \int_{I} | V_{i,n}(s) |^{2} \, ds 
  \le F_{p}(u_{i-1,n}) - F_{p}(u_{i,n}) + \tau_{n} L 4 c_{1}^{2}.
\end{align}
It follows that 
\begin{align*}
\dfrac{1}{4} \int^{T}_{0} \int_{I} | V_{n}(s,t) |^{2} \, ds dt &
 = \sum^{n}_{i=1}  \dfrac{\tau_{n}}{4} \int_{I} | V_{i,n}(s) |^{2} \, dx \\
 &\le \sum^{n}_{i=1} \left[ F_{p}(u_{i-1,n}) - F_{p}(u_{i,n}) +  \tau_{n} L 4 c_{1}^{2}\right]    \le F_{p}(u_{0}) +T L 4 c_{1}^{2}. 
\end{align*}
Thus we obtain \eqref{eq:3.7}. 
To infer \eqref{eq:3.8L2} we use again \eqref{eq:3.10} as follows
\begin{align*}
\| u_{i,n} \|_{L^{2}(I)} &\leq \sum_{j=1}^{i} \| u_{j,n} -u_{j-1,n} \|_{L^{2}(I)} + \| u_{0,n} \|_{L^{2}(I)}\\
& \leq \sum_{j=1}^{i} \sqrt{2 \tau_{n}} \sqrt{P_{j,n}(u_{j,n})} + \|u_{0} \|_{L^{2}(I)} \leq  (\sum_{j=1}^{i} 2\tau_{n})^{1/2} (\sum_{j=1}^{i} P_{j,n}(u_{j,n}))^{1/2} +\|u_{0} \|_{L^{2}(I)}\\
& \leq \|u_{0} \|_{L^{2}(I)}+\sqrt{4T} (\sum_{j=1}^{i}\left[ F_{p}(u_{j-1,n}) - F_{p}(u_{j,n}) +  \tau_{n} L 4 c_{1}^{2}\right])^{1/2} \\
&\leq\|u_{0} \|_{L^{2}(I)}+ (4T ( F_{p}(u_{0}) +T L 4 c_{1}^{2}))^{1/2} \leq\|u_{0} \|_{L^{2}(I)}+ \sqrt{\frac{4T}{p} c_{*}}.
\end{align*}
\end{proof}


\subsubsection{Regularity of approximating functions ($1<p<\infty$)}\label{sec:regMinimizers}
We now discuss the regularity of the minimizers of \eqref{Min}. 
Recalling \eqref{firstvar} we see that
\begin{align}\label{regeq}
 \int_{I} w \varphi_{s} + \xi \varphi ds =0 \qquad \forall \varphi \in W^{1,p}_{\rm per}(I) 
  \end{align}
holds for
$$w = |(u_{i,n}+\phi)_{s}|^{p-2} (u_{i,n}+\phi)_{s} \in L^{\frac{p}{p-1}}(I)$$ and
$$\xi = \frac{u_{i,n}-u_{i-1,n}}{\tau_{n}} -\tilde{\lambda}_{1}(u_{i-1,n}) \sin (u_{i,n}+\phi) + \tilde{\lambda}_{2}(u_{i-1,n}) \cos (u_{i,n}+\phi) \in W^{1,p}_{\rm per}(I).$$
From equation \eqref{regeq} we infer immediately that $ w$ admits weak derivative and $w_{s}=\xi$, thus $w \in W^{2,p}(I)$.
We claim that actually $w \in W^{2,p}_{\rm per}(I)$.
Indeed testing with $\varphi \in W^{1,p}_{\rm per}(I)$ and integrating by parts we infer
\begin{align*}
0=[w\varphi]_{0}^{L}+ \int_{I} (\xi-w_{s}) \varphi ds = \varphi(0) (w(L)-w(0)).
\end{align*}
Since $\varphi \in W^{1,p}_{\rm per}(I)$ can be chosen arbitrarily it follows that 
\begin{align}\label{periodicityproperty}
w(0)=w(L)
\end{align}
that is $w \in W^{2,p}_{\rm per}(I) $. Finally note that
\begin{align*}
\| w \|_{L^{\infty}} \leq \frac{1}{L} \int_{I} |w| + \int_{I} |w_{s}| ds \leq C( \| (u_{i,n}+\phi)_{s} \|_{L^{p}(I)} 
+  \| V_{i,n} \|_{L^{2}(I)} +  (| \tilde{\lambda}_{1} (u_{i-1,n})| + |\tilde{\lambda}_{2} (u_{i-1,n})|))
\end{align*}
with $C=C(L)$,
so that  using \eqref{lemconti} and \eqref{eq:3.8} we immediatley obtain

\begin{lem}\label{lemma3bis}
Let the assumptions of Theorem~\ref{thm:3.2} hold. Then
\begin{align*}
\| |(u_{i,n}+\phi)_{s}|^{p-2} (u_{i,n}+\phi)_{s}\|_{W^{1,2} (I)} &  \leq C (1 +\| V_{i,n} \|_{L^{2}(I)} ) 
\end{align*}
for all $i=1, \ldots,n$, where $C=C(L, c_{*},p)$. In particular the estimates
\begin{align*}
\| |(u_{i,n}+\phi)_{s}|^{p-2} (u_{i,n}+\phi)_{s}\|_{L^{\infty} (I)} &  \leq C (1 +\| V_{i,n} \|_{L^{2}(I)} ) ,\\
 \| |(u_{i,n}+\phi)_{s}|^{p-2} (u_{i,n}+\phi)_{s}\|_{L^{q} (I)} &  \leq C (1 +\| V_{i,n} \|_{L^{2}(I)} )  ,\\
\|(u_{i,n}+\phi)_{s} \| _{L^{\infty} (I)}^{ p-1} &\leq C (1 +\| V_{i,n} \|_{L^{2}(I)} ) ,\\
 \| |(u_{i,n}+\phi)_{s}|^{p-2} (u_{i,n}+\phi)_{s}\|_{W^{1,1} (I)} &  \leq C (1 +\| V_{i,n} \|_{L^{2}(I)} ) ,
\end{align*}
hold for all $i=1, \ldots,n$, where $C=C(L, c_{*},p)$ and $\frac{1}{p}+\frac{1}{q}=1$.
\end{lem}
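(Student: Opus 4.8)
The plan is to read the desired bounds off the regularity identity already established just above the statement: the quantity $w := |(u_{i,n}+\phi)_s|^{p-2}(u_{i,n}+\phi)_s$ lies in $W^{2,p}_{\rm per}(I)$ with weak derivative $w_s = \xi$, where
\[
\xi = V_{i,n} - \tilde{\lambda}_1(u_{i-1,n})\sin(u_{i,n}+\phi) + \tilde{\lambda}_2(u_{i-1,n})\cos(u_{i,n}+\phi).
\]
Since $\|w\|_{W^{1,2}(I)} = \|w\|_{L^2(I)} + \|w_s\|_{L^2(I)}$, it suffices to bound each summand by $C(1+\|V_{i,n}\|_{L^2(I)})$ with $C=C(L,c_*,p)$.

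For the derivative term I would use $|\sin|,|\cos|\le 1$ together with the uniform control of the multipliers from \eqref{lemconti}, obtaining
\[
\|w_s\|_{L^2(I)} = \|\xi\|_{L^2(I)} \le \|V_{i,n}\|_{L^2(I)} + \big(|\tilde{\lambda}_1(u_{i-1,n})| + |\tilde{\lambda}_2(u_{i-1,n})|\big)\sqrt{L} \le \|V_{i,n}\|_{L^2(I)} + 2c_1\sqrt{L},
\]
which has the claimed form. For the $L^2$-term I would invoke the elementary one-dimensional estimate $\|w\|_{L^\infty(I)} \le \tfrac{1}{L}\int_I|w|\,ds + \int_I |w_s|\,ds$ recorded above. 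The first summand is controlled by the energy bound: since $|w| = |(u_{i,n}+\phi)_s|^{p-1}$, H\"older's inequality and \eqref{eq:3.8} give $\int_I |w|\,ds \le L^{1/p}\|(u_{i,n}+\phi)_s\|_{L^p(I)}^{p-1} \le L^{1/p}c_*^{(p-1)/p}$; the second is $\le \sqrt{L}\,\|w_s\|_{L^2(I)}$ by Cauchy--Schwarz, hence $\le C(1+\|V_{i,n}\|_{L^2(I)})$ by the previous step. Combining yields $\|w\|_{L^\infty(I)} \le C(1+\|V_{i,n}\|_{L^2(I)})$, and therefore $\|w\|_{L^2(I)} \le \sqrt{L}\,\|w\|_{L^\infty(I)} \le C(1+\|V_{i,n}\|_{L^2(I)})$, which completes the principal estimate.

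The four ``in particular'' inequalities then follow with no further work. The $L^\infty$ bound is exactly what was just proved; the $L^q$ bound with $\tfrac1p+\tfrac1q=1$ follows from $\|w\|_{L^q(I)} \le L^{1/q}\|w\|_{L^\infty(I)}$ on the bounded interval; the bound on $\|(u_{i,n}+\phi)_s\|_{L^\infty(I)}^{p-1}$ is immediate from the pointwise identity $|w| = |(u_{i,n}+\phi)_s|^{p-1}$ combined with the $L^\infty$ bound; and the $W^{1,1}$ bound follows from the $W^{1,2}$ bound via $\|w\|_{W^{1,1}(I)} \le \sqrt{L}\,\|w\|_{W^{1,2}(I)}$.

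I do not anticipate a substantial obstacle: the entire argument reduces to the identification $w_s=\xi$ (already derived), the multiplier bound \eqref{lemconti}, and the energy bound \eqref{eq:3.8}. The only real care required is bookkeeping, namely verifying that every constant depends solely on $L$, $c_*$ and $p$ and not on the index $i$ or on $n$; this is ensured precisely because both \eqref{lemconti} and \eqref{eq:3.8} are uniform in $i$ and $n$.
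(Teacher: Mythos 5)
Your argument is correct and follows essentially the same route as the paper: both rest on the identification $w_s=\xi$ from the first-variation identity, the elementary bound $\|w\|_{L^\infty(I)}\le \tfrac1L\int_I|w|\,ds+\int_I|w_s|\,ds$, and the uniform bounds \eqref{eq:3.8} and \eqref{lemconti}. Your write-up merely makes the bookkeeping (H\"older for $\int_I|w|\,ds$, Cauchy--Schwarz for the remaining terms, and the reduction of the four ``in particular'' estimates to the $L^\infty$ and $W^{1,2}$ bounds) more explicit than the paper does.
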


\subsubsection{Discrete Lagrange multipliers ($1<p<\infty$)}

Recalling how $\tilde{\lm}_{j}(\cdot)$ is defined in \eqref{eq:P} we now give the following definition:
\begin{dfn}\label{deflambdas}
Let $u_{i,n} \in W^{1,p}_{\rm per}(I)$ denote the solution of \eqref{Min} obtained by Theorem~\ref{thm:3.1}.
We define $\tilde{\lm}_{n}^{j}(t) : (\,0, T\,] \to \R$ for $j=1,2$ by 
\begin{align}
\tilde{\lm}_{n}^{j}(t) = \tilde{\lm}_{j}(u_{i-1,n}) \quad \text{if} \quad t \in (\,(i-1)\tau_{n}, i \tau_{n}\,] \quad \text{for} \quad i = 1, 2, \ldots, n, 
\end{align}
\end{dfn}
Moreover let us define the following  piecewise linear maps:
\begin{dfn}\label{defBiglambdas}
We define $\Lambda_{n}^{1}(t), \Lambda_{n}^{2}(t) : (\,0, T\,] \to \R$ by 
\begin{align}
\Lambda_{n}^{j}(t) = \tilde{\lm}_{j}(u_{i-1,n}) +  (t- (i-1)\tau_{n}) \frac{\tilde{\lm}_{j}(u_{i,n})-\tilde{\lm}_{j}(u_{i-1,n})}{\tau_{n}}\quad \text{if} \quad t \in (\,(i-1)\tau_{n}, i \tau_{n}\,] 
\end{align}
for $i = 1, 2, \ldots, n$ and $j=1,2$.
\end{dfn}


\begin{lem} \label{lem:3.3}
Let the assumptions of Theorem~\ref{thm:3.2} hold. Then
$\tilde{\lm}_{n}^{j} \in L^{2}(0,T)$ for any $n \in \N$, $j=1,2$. In particular, we have 
\begin{align*}
\int^{T}_{0} \tilde{\lm}_{n}^{j}(t)^{2}\, dt \le \frac{c_{*}}{8pL}, \qquad \text{ for } j=1,2. 
\end{align*}
Therefore there exists maps $\lambda_{j} \in L^{2}(0,T)$ towards which  $\tilde{\lm}_{n}^{j} $ converges weakly in $L^{2}(0,T)$.
\end{lem}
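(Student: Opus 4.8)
The plan is to obtain the estimate directly from the uniform pointwise control on the discrete multipliers already established in \eqref{lemconti}, combined with the smallness restriction on $T$ built into Theorem~\ref{thm:3.2}. The key observation is that, by Definition~\ref{deflambdas}, $\tilde{\lm}_n^j$ is a piecewise constant (hence measurable) function whose values are exactly the numbers $\tilde{\lm}_j(u_{i-1,n})$, each of which is bounded by $c_1$ in absolute value. Thus $\tilde{\lm}_n^j \in L^\infty(0,T) \subset L^2(0,T)$ is immediate, and the only genuine work is to track how the constant $c_1$ turns into the clean bound $c_*/(8pL)$.

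First I would write the $L^2$-norm as a sum over the time partition. Since $(\,0,T\,]$ is split into the $n$ intervals $(\,(i-1)\tau_n, i\tau_n\,]$, each of length $\tau_n = T/n$, on which $\tilde{\lm}_n^j$ is constant, I get
\begin{align*}
\int_0^T \tilde{\lm}_n^j(t)^2 \, dt = \sum_{i=1}^n \tau_n\, \tilde{\lm}_j(u_{i-1,n})^2 \le n \tau_n c_1^2 = T c_1^2,
\end{align*}
where the inequality uses \eqref{lemconti}. I would then invoke the hypothesis $T \le c_*/(8pLc_1^2)$ from Theorem~\ref{thm:3.2}, which gives $T c_1^2 \le c_*/(8pL)$ and hence the asserted estimate. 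Crucially, this bound is independent of $n$.

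Finally, to produce the weak limits $\lambda_j$, I would appeal to weak compactness of bounded sequences in the Hilbert space $L^2(0,T)$: since $\{\tilde{\lm}_n^j\}_{n\in\N}$ is bounded by a constant independent of $n$, a subsequence converges weakly to some $\lambda_j \in L^2(0,T)$, which inherits the same bound by weak lower semicontinuity of the norm. I do not expect any real obstacle here; the single point worth emphasising is that the specific constant $c_*/(8pL)$ is engineered precisely by the admissible range of $T$, so that the $n$-dependent partition does not spoil the uniformity. The genuine difficulty, if any, lies upstream—in Theorem~\ref{thm:3.2}, where the uniform bound $c_1$ on the multipliers and the admissible range of $T$ are obtained—rather than in this lemma, which is a short consequence of those facts plus standard Hilbert-space weak compactness.
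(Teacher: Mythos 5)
Your proposal is correct and follows essentially the same route as the paper: both write the $L^2$-norm as a sum over the partition intervals, apply the uniform bound $|\tilde{\lambda}_j(u_{i-1,n})|\le c_1$ from \eqref{lemconti} to get $Tc_1^2$, invoke the restriction $T\le c_*/(8pLc_1^2)$ from Theorem~\ref{thm:3.2}, and conclude by weak compactness in $L^2(0,T)$. No gaps.
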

\begin{proof}
Using Theorem \ref{thm:3.2} and  \eqref{lemconti} we infer that  
\begin{align*}
\int^{T}_{0} \tilde{\lm}^{j}_{n}(t)^{2}\, dt 
= \sum^{n}_{i=1} \int^{i \tau_{n}}_{(i-1)\tau_{n}} | \tilde{\lm}_{j}(u_{i-1,n})|^{2} \, dt \le \sum^{n}_{i=1}  \tau_{n} c_{1}^{2} \leq Tc_{1}^{2}.
\end{align*}
\end{proof}


For the Lagrange multipliers we  derive further meaningful estimates.
\begin{lem}\label{est-lambda}
Let the assumptions of Theorem~\ref{thm:3.2} hold.
Consider $\tilde{\lm}_{n}^{j}: (0, T] \to \R$ as given in Definition~\ref{deflambdas} and let $t_{1}, t_{2} \in (0,T]$. There exists a  positive constant $C=C(u_{0}, L,p)$ such that for $r=1,2$ we have
\begin{align*}
|\tilde{\lm}_{n}^{r}(t_{2}) -\tilde{\lm}_{n}^{r}(t_{1})| &\leq C \|u_{i-1,n} -u_{j-1,n} \|_{W^{1,p}(I)} \\
&\leq C \|u_{i-1,n} -u_{j-1,n} \|_{L^{2}(I)}+C \|(u_{i-1,n} -u_{j-1,n})_{s} \|_{L^{p}(I)},
\end{align*}
where $i, j \in \{ 1, \ldots, n\}$ are such such that $t_{1} \in (\,(i-1)\tau_{n}, i \tau_{n}\,]$ and $t_{2} \in (\,(j-1)\tau_{n}, j \tau_{n}\,]$. 
If $t_{2}=t+\tau_{n}$ then
\begin{align*}
|\tilde{\lm}_{n}^{r}(t+\tau_{n}) -\tilde{\lm}_{n}^{r}(t)|  \leq C \tau_{n}\|(V_{n}(\cdot,t) )_{s}  \|_{L^{p}(I)} + C \tau_{n}\|V_{n}(\cdot,t)   \|_{L^{2}(I)}.
\end{align*}
If in addition $p \geq 2$ and we know that  $\|(u_{i,n}+\phi)_{s} \|_{L^{\infty}}^{p-1} \leq C (1 + \| V_{i,n} \|_{L^{2}(I)})$ for all $i=0, \ldots, n$ (for some appropriately defined $V_{0,n} \in L^{2}(I)$), then
\begin{align}\label{est-lambda-forte}
| \tilde{\lambda}_{r}(u_{i,n}) -\tilde{\lambda}_{r}(u_{i-1,n})| &\leq C \tau_{n} \| (V_{i,n})_{s} \|_{L^{2}(I)} (1+  \| V_{i,n} \|_{L^{2}(I)} 
+  \| V_{i-1,n} \|_{L^{2}(I)}) \\
& \quad + C \tau_{n} \| V_{i,n} \|_{L^{2}(I)}   \qquad \text{ for }\qquad r=1,2 \quad i=1, \ldots,n . \notag
\end{align}
\end{lem}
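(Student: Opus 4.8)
The plan is to estimate the difference $\tilde\lambda_r(u_{i,n}) - \tilde\lambda_r(u_{i-1,n})$ by tracing through the explicit formula for $\tilde\lambda_r$ given in \eqref{eq:P}. Recall that each $\tilde\lambda_r$ is a quotient whose denominator is $\det A_T(u+\phi)$ and whose numerator is a sum of products of integrals of the form $\int_I |\partial_s(u+\phi)|^p \cos(u+\phi)\,ds$, $\int_I \cos^2(u+\phi)\,ds$, etc. The natural approach is to write the difference of two quotients $\frac{N_i}{D_i} - \frac{N_{i-1}}{D_{i-1}}$ using the standard identity $\frac{N_i}{D_i} - \frac{N_{i-1}}{D_{i-1}} = \frac{(N_i - N_{i-1})D_{i-1} + N_{i-1}(D_{i-1} - D_i)}{D_i D_{i-1}}$, then control each factor separately. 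The denominators are bounded below by $\tfrac14\delta^2$ via Lemma~\ref{lemLM} and bounded above by Lemma~\ref{lem4.2}, while the numerators and $\det A_T$ are bounded above using the uniform bound \eqref{eq:3.8}. So the whole problem reduces to estimating, for each constituent integral $\int_I f(u_{i,n}+\phi)\,ds - \int_I f(u_{i-1,n}+\phi)\,ds$ where $f$ is $\cos^2$, $\sin\cos$, $|\cdot|^p\cos$, etc., the difference in terms of $u_{i,n}-u_{i-1,n} = \tau_n V_{i,n}$.

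First I would handle the trigonometric integrals. For a bounded Lipschitz function such as $\cos^2\theta$ or $\sin\theta\cos\theta$, the mean value theorem gives pointwise $|f(u_{i,n}+\phi) - f(u_{i-1,n}+\phi)| \le C|u_{i,n}-u_{i-1,n}|$, so integrating and applying Cauchy--Schwarz yields a bound $C\sqrt{L}\,\|u_{i,n}-u_{i-1,n}\|_{L^2} = C\tau_n\|V_{i,n}\|_{L^2}$; this accounts for the lower-order term $C\tau_n\|V_{i,n}\|_{L^2}$ in \eqref{est-lambda-forte}. The harder constituents are the weighted integrals $\int_I |\partial_s(u_{i,n}+\phi)|^p \cos(u_{i,n}+\phi)\,ds$, since these involve the difference of the $p$-th powers of the gradients as well as the trigonometric factor. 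I would split this difference into two pieces by adding and subtracting a hybrid term, isolating (a) the difference $\big||\partial_s(u_{i,n}+\phi)|^p - |\partial_s(u_{i-1,n}+\phi)|^p\big|$ weighted by a bounded trig factor, and (b) the difference of trig factors weighted by $|\partial_s(u_{i-1,n}+\phi)|^p$.

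For piece (b) the same Lipschitz/Cauchy--Schwarz argument works once we know $|\partial_s(u_{i-1,n}+\phi)|^p$ is controlled, which follows from \eqref{eq:3.8}, again producing the $C\tau_n\|V_{i,n}\|_{L^2}$-type contribution. Piece (a) is where the restriction $p\ge 2$ and the hypothesis on $\|(u_{i,n}+\phi)_s\|_{L^\infty}^{p-1}$ enter, and I expect this to be the main obstacle. The plan is to use the second inequality of Lemma~\ref{Lindquist} to bound $\big||a|^{p-2}a - |b|^{p-2}b\big|$, or more directly to write $|a|^p - |b|^p$ and estimate it by $C|a-b|(|a|^{p-1}+|b|^{p-1})$ in the spirit of Remark~\ref{remOkabe2} and Lemma~\ref{Lem:1.1}, where $a = \partial_s(u_{i,n}+\phi)$ and $b = \partial_s(u_{i-1,n}+\phi)$, so that $a-b = \tau_n (V_{i,n})_s$. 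Integrating, applying Hölder with the $L^\infty$-bound on $(u_{i,n}+\phi)_s$, and invoking the stated hypothesis $\|(u_{i,n}+\phi)_s\|_{L^\infty}^{p-1} \le C(1+\|V_{i,n}\|_{L^2})$ (and similarly for $i-1$) then converts the factor $|a|^{p-1}+|b|^{p-1}$ into $C(1+\|V_{i,n}\|_{L^2}+\|V_{i-1,n}\|_{L^2})$, while $\|a-b\|_{L^2} = \tau_n\|(V_{i,n})_s\|_{L^2}$ supplies the leading gradient-of-velocity factor. Collecting piece (a), piece (b), and the denominator estimates, and absorbing all constants depending only on $u_0$, $L$, $p$, gives exactly the claimed bound \eqref{est-lambda-forte}; the earlier, weaker inequalities in the lemma follow from the same computation by replacing the sharp $L^\infty$-control with the crude uniform bounds and reading off the $W^{1,p}$-dependence directly.
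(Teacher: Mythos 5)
Your proposal follows essentially the same route as the paper: difference of quotients with the determinant bounded below by Lemma~\ref{lemLM} and everything else bounded via Theorem~\ref{thm:3.2}, the mean value theorem for the trigonometric factors, Lemma~\ref{Lem:1.1} (via Remarks~\ref{remOkabe} and~\ref{remOkabe2}) for the differences of $p$-th powers, embedding for the first two claims, and the assumed $L^\infty$-control of $(u_{i,n}+\phi)_s$ precisely where you place it to obtain \eqref{est-lambda-forte}. No substantive differences.
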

Note that Lemma~\ref{lemma3bis} yields exactly the additional condition required to infer \eqref{est-lambda-forte}.
\begin{proof}
Let $t_{1} \neq t_{2}$. There exist $i, j \in \{ 1, \ldots, n\}$ such that $t_{1} \in (\,(i-1)\tau_{n}, i \tau_{n}\,]$ and $t_{2} \in (\,(j-1)\tau_{n}, j \tau_{n}\,]$. 
Then by definition we have 
$$  \tilde{\lm}_{n}^{r}(t_{2}) -\tilde{\lm}_{n}^{r}(t_{1})= \tilde{\lm}_{r}(u_{j-1,n}) - \tilde{\lm}_{r}(u_{i-1,n}), \qquad  r \in \{ 1,2 \} .$$
If $i=j$ then $\tilde{\lm}_{n}^{r}(t_{2}) =\tilde{\lm}_{n}^{r}(t_{1})$, therefore let us assume that $i\neq j$.
Then  we have that
$ 0<|t_{2} -t_{1}| \leq (|j-i|+1) \tau_{n}$.
To make the reading easier let us set $v:= u_{i-1,n}$ and $w:=u_{j-1,n}$. Also let us choose $r=1$ (the case $r=2$ is treated exactly in the same way). We have that $u,v \in W^{1,p}_{\rm per}(I)$. Further, by Theorem~\ref{thm:3.2} and Lemma~\ref{Hilfsatz1}  we we know that $$\|v \|_{W^{1,p}(I)}, \|w \|_{W^{1,p}(I)} \leq C=C(u_{0}, L,p,\phi).$$ 
By definition (recall \eqref{eq:P}) we have that
\begin{align*}
&\tilde{\lambda}_{1}(v) - \tilde{\lambda}_{1}(w)=\frac{1}{\det A_{T}(v+\phi)} \Big(  \int_{I} |v_{s} + 2\pi \frac{\eta}{L}|^{p} \cos (v+\phi) \, ds  \int_{I} \cos^{2} (v+\phi) \, ds    \\
& \qquad   \qquad \qquad  \qquad \qquad  \qquad \qquad+  \int_{I} |v_{s} +2\pi \frac{\eta}{L}|^{p} \sin (v+\phi) \, ds  \int_{I} \cos (v+\phi)  \sin (v+\phi)\, ds  \Big), \\
& \quad - \frac{1}{\det A_{T}(w+\phi)} \Big(  \int_{I} |w_{s} + 2\pi \frac{\eta}{L}|^{p} \cos (w+\phi) \, ds  \int_{I} \cos^{2} (w+\phi) \, ds    \\
& \qquad \qquad  \qquad  \qquad \qquad+  \int_{I} |w_{s} +2\pi \frac{\eta}{L}|^{p} \sin (w+\phi) \, ds  \int_{I} \cos (w+\phi)  \sin (w+\phi)\, ds  \Big), \\
&= \left( \frac{1}{\det A_{T}(v+\phi)} - \frac{1}{\det A_{T}(w+\phi)} \right ) \Big(  \int_{I} |v_{s} + 2\pi \frac{\eta}{L}|^{p} \cos (v+\phi) \, ds  \int_{I} \cos^{2} (v+\phi) \, ds  +\ldots \Big) \\
& \quad +  \frac{1}{\det A_{T}(w+\phi)}  \Big(  \int_{I} |v_{s} + 2\pi \frac{\eta}{L}|^{p} \cos (v+\phi) \, ds  \int_{I} \cos^{2} (v+\phi) \, ds  +\ldots \\
& \quad \qquad \qquad \qquad \qquad   -  \int_{I} |w_{s} + 2\pi \frac{\eta}{L}|^{p} \cos (w+\phi) \, ds  \int_{I} \cos^{2} (w+\phi) \, ds  - \ldots \Big ) .
\end{align*}
Using Lemma~\ref{lemLM}, the above bounds for $v$ and $w$,   the mean value theorem, embedding theory (that is employing $\|v-w\|_{L^{\infty}} \leq C \| u-w \|_{W^{1,p}(I)}$) and 
 Remark~\ref{remOkabe} to evaluate differences of type
 $ \int_{I} \left||v_{s} + 2\pi \frac{\eta}{L}|^{p}- |w_{s} + 2\pi \frac{\eta}{L}|^{p} \right | \, ds$  a lengthy but straightforward calculation gives
 \begin{equation}\label{LIPlambda}
  |\tilde{\lambda}_{1}(v) - \tilde{\lambda}_{1}(w)| \leq C \| v-w \|_{W^{1,p}(I)} \leq C\|( v-w)_{s} \|_{L^{p}(I)} + C\| v-w \|_{L^{2}(I)},
  \end{equation}
  where we have used Lemma \ref{Hilfsatz1} in the second inequality.
Thus we can write
 \begin{align*}
 |\tilde{\lm}_{n}^{r}(t_{2}) -\tilde{\lm}_{n}^{r}(t_{1})| &\leq C \|u_{i-1,n} -u_{j-1,n} \|_{W^{1,p}(I)} \\
 & \leq C \|(u_{i-1,n} -u_{j-1,n})_{s} \|_{L^{p}(I)} + C\|u_{i-1,n} -u_{j-1,n} \|_{L^{2}(I)}  .
 \end{align*}
 The second claim follows by taking $t_{1}=t$, $t_{2}=t+\tau_{n}$, (thus $j=i+1$) and the definition of the velocity $V_{n}$.   
 The third claim follows by arguing as above and by being a bit more careful in the estimate.
 More precisely usage of the embedding theorem must be avoided:  this is done as follows. Employing Lemma~\ref{lemLM}, the uniform  bounds for $v$ and $w$, $p \geq2 $, and  the mean value theorem, one evaluates as follows terms such as
\begin{align*}
 &|\int_{I} |(v + \phi)_{s}|^{p} (\cos (v+\phi) -\cos (w +\phi) ) \, ds| 
 \leq C \|(v + \phi)_{s} \|_{L^{\infty}}^{p-1}  \int_{I} |(v+\phi)_{s}||v-w| ds  \\
 &\qquad \leq C \|(v + \phi)_{s} \|_{L^{\infty}}^{p-1}\| (v+\phi)_{s} \|_{L^{2}(I)} \|( v-w)_{s} \|_{L^{2}(I)} \leq  C \|(v + \phi)_{s} \|_{L^{\infty}}^{p-1} \|( v-w)_{s} \|_{L^{2}(I)} ,\\
 & \int_{I} |\cos ^{2}(v+\phi) - \cos ^{2}(v+\phi)| ds  \leq C \int_{I} |v-w| ds \leq C \| v-w \|_{L^{2}(I)}.
 \end{align*}
We additionally employ now Remark~\ref{remOkabe2} and the extra assumption to estimate   differences of type
 $ \int_{I} \left||(u_{i,n} +\phi)_{s} |^{p}- |(u_{i-1,n} +\phi)_{s} |^{p} \right | \, ds$: namely
 \begin{align*}
 \int_{I} \left||(u_{i,n} +\phi)_{s} |^{p}- |(u_{i-1,n} +\phi)_{s} |^{p} \right | \, ds &\leq C \| (u_{i,n} -u_{i-1,n})_{s} \|_{L^{2}(I)}
  (1 + \| V_{i,n} \|_{L^{2}(I)} + \| V_{i-1,n} \|_{L^{2}(I)}) \\
  &= C \tau_{n} \| (V_{i,n})_{s} \|_{L^{2}(I)}(1 + \| V_{i,n} \|_{L^{2}(I)} + \| V_{i-1,n} \|_{L^{2}(I)}) .
 \end{align*}
 \end{proof}
\begin{lem}\label{lem:Lambdazero}
Let the assumptions and notation of Theorem~\ref{thm:3.2} hold.  Then  for $\Lambda^{j}_{n}$, $j=1,2$ (recall Definition  \ref{defBiglambdas}), we have
\begin{align}
&|(\Lambda_{n}^{j})_{t}(t)| \leq C \| V_{n}( \cdot, t)\|_{W^{1,p}(I)}  \qquad \text{for a.e.} \quad  t \in [0,T], \\
& |\Lambda_{n}^{j}(t)| \leq C \qquad \text{for all } t \in [0,T],
\end{align}
where $C=C(p,c_{*},L, u_{0})$.
\end{lem}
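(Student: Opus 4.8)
The goal is to establish two estimates for the piecewise-linear interpolants $\Lambda_n^j$ of the discrete Lagrange multipliers: a pointwise bound on the time-derivative and a uniform bound on $\Lambda_n^j$ itself. Both should follow from the work already done in Lemma~\ref{est-lambda} and Theorem~\ref{thm:3.2}.

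\medskip

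\textbf{Plan for the uniform bound.} The bound $|\Lambda_n^j(t)|\le C$ is the easier of the two. On each interval $(\,(i-1)\tau_n,i\tau_n\,]$ the map $\Lambda_n^j$ is the affine interpolation between the values $\tilde{\lm}_j(u_{i-1,n})$ and $\tilde{\lm}_j(u_{i,n})$, hence it lies between these two endpoints. By the bound \eqref{lemconti} from Theorem~\ref{thm:3.2}, we have $|\tilde{\lm}_r(u_{j-1,n})|\le c_1$ for all relevant indices, and the same bound holds for $\tilde{\lm}_r(u_{i,n})$ by \eqref{eq:3.8} (all the $u_{i,n}$ satisfy the uniform $W^{1,p}$ estimate, so Lemma~\ref{lem4.2} applies uniformly). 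Since a convex combination of two numbers bounded in modulus by $c_1$ is again bounded by $c_1$, we obtain $|\Lambda_n^j(t)|\le c_1\le C$ for all $t\in[0,T]$.

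\medskip

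\textbf{Plan for the derivative bound.} For the derivative estimate I would fix $t$ in the interior of an interval $(\,(i-1)\tau_n,i\tau_n\,)$, where $\Lambda_n^j$ is differentiable, and compute directly from Definition~\ref{defBiglambdas}:
\begin{align*}
(\Lambda_n^j)_t(t)=\frac{\tilde{\lm}_j(u_{i,n})-\tilde{\lm}_j(u_{i-1,n})}{\tau_n}.
\end{align*}
Now I would apply the first estimate of Lemma~\ref{est-lambda} with the choice $t_1\in(\,(i-1)\tau_n,i\tau_n\,]$, $t_2\in(\,i\tau_n,(i+1)\tau_n\,]$ so that the indices appearing there are exactly $i-1$ and $i$; equivalently I can invoke the Lipschitz-type bound \eqref{LIPlambda} directly with $v=u_{i,n}$ and $w=u_{i-1,n}$, which gives
\begin{align*}
|\tilde{\lm}_j(u_{i,n})-\tilde{\lm}_j(u_{i-1,n})|\le C\|u_{i,n}-u_{i-1,n}\|_{W^{1,p}(I)}.
\end{align*}
Dividing by $\tau_n$ and using that $u_{i,n}-u_{i-1,n}=\tau_n V_{i,n}$ (by definition of the discrete velocity) yields
\begin{align*}
|(\Lambda_n^j)_t(t)|\le C\,\frac{\|\tau_n V_{i,n}\|_{W^{1,p}(I)}}{\tau_n}=C\|V_{i,n}\|_{W^{1,p}(I)}=C\|V_n(\cdot,t)\|_{W^{1,p}(I)},
\end{align*}
where in the last equality I use that $V_n(\cdot,t)=V_{i,n}$ on this interval (Definition~\ref{definition:2.5}). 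Since this holds on every open subinterval, it holds for a.e. $t\in[0,T]$.

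\medskip

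\textbf{Main obstacle.} I expect no serious obstacle here: the heavy lifting is entirely contained in Lemma~\ref{est-lambda}, whose Lipschitz estimate \eqref{LIPlambda} is exactly the tool needed, and in the uniform bounds of Theorem~\ref{thm:3.2}. The only points requiring a little care are bookkeeping ones: verifying that the index pairing in Lemma~\ref{est-lambda} correctly produces the consecutive indices $i-1,i$ when differentiating on a single time interval, and confirming that the uniform multiplier bound $c_1$ from \eqref{lemconti} indeed applies to $\tilde{\lm}_j(u_{i,n})$ at the final index (this follows from \eqref{eq:3.8}, which bounds every $u_{i,n}$ in the relevant norm, so that Lemma~\ref{lem4.2} may be applied uniformly over all $i$). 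With these observations in place, both inequalities follow immediately.
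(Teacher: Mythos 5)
Your proposal is correct and follows essentially the same route as the paper: the derivative bound comes from the Lipschitz estimate \eqref{LIPlambda} of Lemma~\ref{est-lambda} applied to consecutive indices together with $u_{i,n}-u_{i-1,n}=\tau_n V_{i,n}$, and the uniform bound comes from the uniform multiplier bounds of Theorem~\ref{thm:3.2}. Your extra bookkeeping (the convex-combination observation and the remark that the endpoint index $i=n$ is covered via \eqref{eq:3.8} and Lemma~\ref{lem4.2}) is correct and only makes explicit what the paper leaves implicit.
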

\begin{proof}
Following the definition of $\Lambda_{n}^{j}$ and Lemma~\ref{est-lambda} it follows that for  $t \in ((i-1)\tau_{n}, i \tau_{n})$
\begin{align*}
|(\Lambda_{n}^{j})_{t}(t)|= \left|   \frac{\tilde{\lm}_{j}(u_{i,n})-\tilde{\lm}_{j}(u_{i-1,n})}{\tau_{n}}  \right| \leq C\| V_{n}( \cdot, t)\|_{W^{1,p}(I)}.
\end{align*}
The uniform bound  for $|\Lambda_{n}^{j}(t)|$ follows from  
  uniform bounds for
$ |\tilde{\lambda}_{1} (u_{j,n})|$ and 
$|\tilde{\lambda}_{2} (u_{j,n})|$  for $j \in \{1, \ldots,n\}$ by   \eqref{lemconti} in Theorem~\ref{thm:3.2}. 
\end{proof}

Next we would like to understand how the discrete Lagrange multiplier approximate \eqref{vorrei}.

 \begin{lem} \label{lem:211} Assume that Theorem~\ref{thm:3.2} holds. Then 
 for $t \in (0, T]$ we have
 \begin{align}\label{eqRuleL}
 & \int_{I} \langle  \left (\begin{array}{c} \tilde{\lambda}_{n}^{1}(t)\\ \tilde{\lambda}_{n}^{2}(t) \end{array} \right), 
 \left (\begin{array}{c} -\sin (\tilde{U}_{n}(s,t) +\phi)\\ \cos (\tilde{U}_{n}(s,t) +\phi) \end{array} \right)\rangle   \left (\begin{array}{c} -\sin (\tilde{U}_{n}(s,t) +\phi)\\ \cos (\tilde{U}_{n}(s,t) +\phi) \end{array} \right)ds \\
 &= 
 \int_{I} |(\tilde{U}_{n}(s,t)+\phi)_{s}|^{p}  \left (\begin{array}{c} \cos (\tilde{U}_{n}(s,t) +\phi)\\ \sin (\tilde{U}_{n}(s,t) +\phi) \end{array} \right)ds. \notag
 \end{align}
 \end{lem}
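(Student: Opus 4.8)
The plan is to recognize that the identity \eqref{eqRuleL} is, at each fixed time, simply a restatement of the defining relation \eqref{vorrei} for the Lagrange multipliers, now applied to the piecewise constant map $\tilde{U}_n$. The point is that over a single time interval both sides of \eqref{eqRuleL} depend on $t$ only through one fixed element of $W^{1,p}_{\rm per}(I)$, so that no genuine time dependence is involved and the claim becomes purely algebraic.

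First I would fix $t \in (0,T]$ and choose the index $i \in \{1,\dots,n\}$ with $t \in (\,(i-1)\tau_n, i\tau_n\,]$. By Definitions~\ref{deflambdas} and~\ref{definition:2.5} we have $\tilde{\lambda}_n^j(t) = \tilde{\lambda}_j(u_{i-1,n})$ and $\tilde{U}_n(\cdot,t) = u_{i-1,n}$. Writing $v := u_{i-1,n} \in W^{1,p}_{\rm per}(I)$, $\theta := v + \phi$, $N(\theta) = (-\sin\theta,\cos\theta)$, $T(\theta)=(\cos\theta,\sin\theta)$, and $\vec{\lambda} = (\tilde{\lambda}_1(v),\tilde{\lambda}_2(v))$, the assertion reduces to
$$\int_I \langle \vec{\lambda}, N(\theta)\rangle\, N(\theta)\, ds = \int_I |\theta_s|^p\, T(\theta)\, ds.$$

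Next I would compute the left-hand side directly, expanding the scalar product and the outer product $N(\theta)^t N(\theta)$ componentwise. Recognising that $A_T(\theta) = \int_I N(\theta)^t N(\theta)\, ds$ is exactly the matrix \eqref{defmatrice}, one obtains $\int_I \langle \vec{\lambda}, N(\theta)\rangle N(\theta)\, ds = A_T(\theta)\,\vec{\lambda}^{\,t}$, using the symmetry of $A_T(\theta)$. On the other hand, the explicit formulas for $\tilde{\lambda}_1(v)$ and $\tilde{\lambda}_2(v)$ in \eqref{eq:P} are precisely the unique solution of the linear system $A_T(\theta)\,\vec{\lambda}^{\,t} = \int_I |\theta_s|^p\, T(\theta)^t\, ds$; this is \eqref{defvecL} rewritten, and it is solvable because $\det A_T(\theta) \ge \tfrac14\delta^2 > 0$ by Lemma~\ref{lemLM}. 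Combining the two computations yields the desired equality and hence \eqref{eqRuleL}.

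There is no genuine analytic obstacle here: since $u_{i-1,n}$ is a fixed element of $W^{1,p}_{\rm per}(I)$, the entire statement is an algebraic identity between two equivalent forms of the same $2\times 2$ linear system, and integrability of all integrands is ensured by the bounds of Theorem~\ref{thm:3.2}. The only step that requires care is the bookkeeping of the row/column conventions and matching the explicit coefficients of \eqref{eq:P} term by term against the matrix product $A_T(\theta)\,\vec{\lambda}^{\,t}$; once this verification is carried out, the conclusion is immediate from the definition of the discrete Lagrange multipliers.
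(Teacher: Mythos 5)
Your proof is correct and follows essentially the same route as the paper: the identity is just the defining relation \eqref{vorrei} for the discrete Lagrange multipliers applied to the fixed element $u_{i-1,n}$, combined with the definitions of $\tilde{\lambda}_n^j$ and $\tilde{U}_n$. The paper states this more tersely ("by definition of $\tilde{\lambda}_j$ ... the claim follows"), whereas you spell out the underlying $2\times2$ linear algebra via $A_T(\theta)$ and Lemma~\ref{lemLM}, but the argument is the same.
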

 \begin{proof} By definition of  $\tilde{\lambda}_{j}(\cdot)$, $j=1,2$, 
 we have 
 \begin{align*}
 \int_{I} \langle \left (\begin{array}{c} \tilde{\lambda}_{1}(u_{i,n})\\ \tilde{\lambda}_{2}(u_{i,n}) \end{array} \right), &
 \left (\begin{array}{c} -\sin (u_{i,n} +\phi)\\ \cos (u_{i,n} +\phi) \end{array} \right)\rangle   \left (\begin{array}{c} -\sin (u_{i,n} +\phi)\\ \cos (u_{i,n} +\phi) \end{array} \right)ds \\
 &= 
 \int_{I} |(u_{i,n}+\phi)_{s}|^{p}  \left (\begin{array}{c} \cos (u_{i,n} +\phi)\\ \sin (u_{i,n} +\phi) \end{array} \right)ds. 
 \end{align*}
 The claim now follows using the definition of $\tilde{\lambda}^{j}_{n}$, $j=1,2$, and $\tilde{U}_{n}$.
 \end{proof}


\subsubsection{The case $p=2$: Control of the velocities}

Lemma~\ref{est-lambda} and Lemma~\ref{lem:Lambdazero} indicate that the differentiability in time of the Langrange multipliers is strictly connected to a control of the $W^{1,p}$-norm of the discrete velocities. Therefore  we now explore how to control the latter.

To motivates what follows let us observe that in the special case where $p=2$ we are basically dealing with a linear operator and one can try to recover standard energy estimates on a discrete level. For a PDE of type $w_{t}-w_{ss}=f$, an integral estimate for $w_{ts}$ is obtained by testing the equation differentiated in time with $w_{t}$. What follows employes essentially the same idea but on a discrete level.
  
\begin{lem}\label{controlVel}
Let $p=2$ and that $u_{0} \in W^{2,p}_{\rm per}(I)$.
Let the assumptions and notation of Theorem~\ref{thm:3.2} hold.  
Then 
\begin{align*}
\max_{t \in [0,T]}   \int_{I}| V_{n}(s,t)|^{2}  ds + \int_{0}^{T} \int_{I}|(V_{n}(s,t))_{s}|^{2} ds dt\leq  C \int_{0}^{T} \int_{I} |V_{n}(s,t)|^{2} ds dt+ \int_{I}| V_{0,n}(s)|^{2}  ds \leq C,
\end{align*}
where $V_{0,n}$ is defined by
\begin{align*}
V_{0,n}(s) := (u_{0}+\phi)_{ss}  + \tilde{\lambda}_{1} (u_{0}) \sin (u_{0} +\phi) -\tilde{\lambda}_{2} (u_{0}) \cos (u_{0} +\phi),
\end{align*}
and $C=C(p,c_{*},L, \|u_{0}\|_{W^{2,p}})$.
\end{lem}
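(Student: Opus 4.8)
The plan is to reproduce, at the discrete level, the parabolic energy estimate for an equation of the form $w_t - w_{ss} = f$ obtained by testing the time-differentiated equation with $w_t$; here the role of $w_t$ is played by the discrete velocity $V_{i,n}$. First I would record the Euler--Lagrange identity satisfied by each minimizer: from \eqref{firstvar} and the regularity of Section~\ref{sec:regMinimizers} (so that $u_{i,n} \in W^{2,2}_{\rm per}(I)$ and $V_{i,n} \in W^{1,2}_{\rm per}(I)$ is an admissible test function), for every $\varphi \in W^{1,2}_{\rm per}(I)$ one has
$$\int_I V_{i,n}\,\varphi + (u_{i,n}+\phi)_s\,\varphi_s - \tilde{\lambda}_1(u_{i-1,n})\sin(u_{i,n}+\phi)\,\varphi + \tilde{\lambda}_2(u_{i-1,n})\cos(u_{i,n}+\phi)\,\varphi\,ds = 0.$$
Crucially, the definition of $V_{0,n}$ is arranged precisely so that, after integrating $(u_0+\phi)_{ss}$ by parts (using $u_0 \in W^{2,2}_{\rm per}(I)$), the same identity holds for the index $i=0$ with the multipliers evaluated at $u_0$.

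Next I would subtract the identity at step $i-1$ from that at step $i$, divide by $\tau_n$, and test with $\varphi = V_{i,n}$. Since $\tfrac{1}{\tau_n}\big((u_{i,n}+\phi)_s - (u_{i-1,n}+\phi)_s\big) = (V_{i,n})_s$, the gradient term produces $\|(V_{i,n})_s\|_{L^2(I)}^2$, while the elementary convexity inequality $(a-b)a \ge \tfrac12(a^2-b^2)$ turns the velocity term into the telescoping quantity $\tfrac{1}{2\tau_n}\int_I(|V_{i,n}|^2 - |V_{i-1,n}|^2)\,ds$. It then remains to control the Lagrange/trigonometric remainder, which I would split into (a) a term with the multiplier frozen at $u_{i-1,n}$ acting on the increment of the trigonometric function, and (b) a term with the trigonometric function frozen at $u_{i-1,n}$ acting on the multiplier increment $\tilde{\lambda}_j(u_{i-1,n}) - \tilde{\lambda}_j(u_{i-2,n})$.

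For (a) I would use $|\tilde{\lambda}_j(u_{i-1,n})| \le c_1$ from \eqref{lemconti} together with the mean value theorem $|\sin(u_{i,n}+\phi) - \sin(u_{i-1,n}+\phi)| \le |u_{i,n}-u_{i-1,n}| = \tau_n|V_{i,n}|$ (and similarly for the cosine), which yields a clean bound by $C\|V_{i,n}\|_{L^2(I)}^2$. For (b) I would invoke the second estimate of Lemma~\ref{est-lambda} (which is valid for $p=2$), namely $|\tilde{\lambda}_j(u_{i-1,n}) - \tilde{\lambda}_j(u_{i-2,n})| \le C\tau_n\|(V_{i-1,n})_s\|_{L^2(I)} + C\tau_n\|V_{i-1,n}\|_{L^2(I)}$, combined with $|\sin|, |\cos| \le 1$ and Cauchy--Schwarz, producing $C\big(\|(V_{i-1,n})_s\|_{L^2(I)} + \|V_{i-1,n}\|_{L^2(I)}\big)\|V_{i,n}\|_{L^2(I)}$. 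Here lies the main obstacle: the multiplier increment contributes a term of the same order, $\|(V_{i-1,n})_s\|_{L^2(I)}$, as the dissipation being estimated. I would defeat this by Young's inequality with a small parameter $\epsilon$, observing that term (b) is absent at $i=1$ (both steps share $\tilde{\lambda}_j(u_0)$), so that upon summation it yields $\epsilon\int_0^T\!\int_I |(V_n)_s|^2$, which is absorbed into the left-hand side; in particular no gradient of $V_{0,n}$ ever appears, and only $V_{0,n} \in L^2(I)$ is needed.

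Finally I would multiply by $\tau_n$ and sum over $i=1,\dots,k$, so that the velocity term telescopes, leaving $\tfrac12\int_I |V_{k,n}|^2\,ds$ and the boundary contribution $\tfrac12\int_I |V_{0,n}|^2\,ds$. Choosing $\epsilon$ small and taking the maximum over $k$ (together with $k=n$ for the dissipation integral) gives the first asserted inequality. The final bound by a constant then follows from the uniform velocity estimate \eqref{eq:3.7}, from $|\tilde{\lambda}_j(u_0)| \le C$ (Lemma~\ref{lem4.2}), and from $(u_0+\phi)_{ss} \in L^2(I)$ with $|\sin|, |\cos| \le 1$, which together show $\int_I |V_{0,n}|^2\,ds \le C(\|u_0\|_{W^{2,p}})$.
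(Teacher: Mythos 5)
Your proposal is correct and follows essentially the same route as the paper: discrete time-differencing of the Euler--Lagrange identity, testing with $V_{i,n}$, the convexity identity to telescope the velocity term, the same (a)/(b) splitting of the Lagrange remainder with the Lipschitz bound \eqref{LIPlambda} from Lemma~\ref{est-lambda}, absorption of the $\|(V_{i-1,n})_s\|_{L^2}^2$ contribution by Young's inequality, and the separate first step built on the defined $V_{0,n}$ so that no derivative of $V_{0,n}$ is ever needed. The only cosmetic difference is your normalization by $\tau_n$ before summing, which the paper avoids by keeping the factor $\tau_n$ in front of the dissipation term throughout.
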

\begin{proof}
In the following we keep the discussion as general as possible (in regard to the choice of $p$) so that it becomes visible where difficulties arise when $p \neq 2$.\\
From \eqref{firstvar} with $u=u_{i,n}\in W^{1,p}_{\rm per}(I)$ solution to \eqref{Min} we obtain
\begin{align*}
0&=\int_{I} \frac{(u_{i,n}(s)-u_{i-1,n}(s))}{\tau_{n}} \varphi(s) ds -\tilde{\lambda}_{1}(u_{i-1,n}) \int_{I} \sin (u_{i,n} + \phi) \varphi(s) ds \notag\\
&\qquad + \tilde{\lambda}_{2}(u_{i-1,n}) \int_{I} \cos (u_{i,n} + \phi) \varphi(s) ds 
  +\int_{I} \left( \left|(u_{i,n} +  \phi)_{s}\right|^{p-2} (u_{i,n} +  \phi)_{s} \right ) \vp_{s} ds\\
&=\int_{I} V_{i,n}(s) \varphi(s) ds -\tilde{\lambda}_{1}(u_{i-1,n}) \int_{I} \sin (u_{i,n} + \phi) \varphi(s) ds + \tilde{\lambda}_{2}(u_{i-1,n}) \int_{I} \cos (u_{i,n} + \phi) \varphi(s) ds \notag\\
& \qquad +\int_{I} \left( \left|(u_{i,n} +  \phi)_{s}\right|^{p-2} (u_{i,n} +  \phi)_{s} \right ) \vp_{s} ds
\end{align*}
for all $\varphi \in W^{1,p}_{\rm per}(I)$.
Replacing $i$ with $i-1$ we obtain an equation for $V_{i-1,n}$ (where $i \geq 2$). Subtraction of these two equations gives
then
\begin{align*}
0&=\int_{I} (V_{i,n}(s) - V_{i-1,n}(s))\varphi(s) ds \\
& \qquad +\int_{I} \left( \left|(u_{i,n} +  \phi)_{s}\right|^{p-2} (u_{i,n} +  \phi)_{s}  -   \left|(u_{i-1,n} +  \phi)_{s}\right|^{p-2} (u_{i-1,n} +  \phi)_{s} \right )  \vp_{s} ds\\
& \qquad 
-\tilde{\lambda}_{1}(u_{i-1,n}) \int_{I} \sin (u_{i,n} + \phi) \varphi(s) ds 
+ \tilde{\lambda}_{1}(u_{i-2,n}) \int_{I} \sin (u_{i-1,n} + \phi) \varphi(s) ds\\
& \qquad 
+ \tilde{\lambda}_{2}(u_{i-1,n}) \int_{I} \cos (u_{i,n} + \phi) \varphi(s) ds 
-\tilde{\lambda}_{2}(u_{i-2,n}) \int_{I} \cos (u_{i-1,n} + \phi) \varphi(s) ds 
\end{align*}
for all $\varphi \in W^{1,p}_{\rm per}(I)$. Choosing $\varphi = V_{i,n}(s)  \in W^{1,p}_{\rm per}(I)$  and using Lemma~\ref{lemDB} we get
\begin{align*}
0&\geq \int_{I} (V_{i,n}(s) - V_{i-1,n}(s)) V_{i,n}(s) ds + C\frac{1}{\tau_{n}} \int_{I}  |(u_{i,n} -u_{i-1,n})_{s}|^{p} ds\\
& \qquad 
-\tilde{\lambda}_{1}(u_{i-1,n}) \int_{I} \sin (u_{i,n} + \phi)  V_{i,n}(s) ds 
+ \tilde{\lambda}_{1}(u_{i-2,n}) \int_{I} \sin (u_{i-1,n} + \phi)  V_{i,n}(s) ds\\
& \qquad 
+ \tilde{\lambda}_{2}(u_{i-1,n}) \int_{I} \cos (u_{i,n} + \phi)  V_{i,n}(s) ds 
-\tilde{\lambda}_{2}(u_{i-2,n}) \int_{I} \cos (u_{i-1,n} + \phi)  V_{i,n}(s) ds .
\end{align*}
Using the simple equality $ a(a-b)=\frac{1}{2} a^{2} +\frac{1}{2}|a-b|^{2} - \frac{1}{2} b^{2}$ we can write
\begin{align*}
 &\frac{1}{2} \int_{I}| V_{i,n}(s)|^{2}  ds 
-\frac{1}{2} \int_{I}| V_{i-1,n}(s)|^{2}  ds
+ C\tau_{n}^{p-1} \int_{I}  |(V_{i,n})_{s}|^{p} ds\\
& \leq -\frac{1}{2} \int_{I} |V_{i,n}(s) - V_{i-1,n}(s)|^{2} ds     \\& \qquad 
+\tilde{\lambda}_{1}(u_{i-1,n}) \int_{I} \sin (u_{i,n} + \phi)  V_{i,n}(s) ds 
- \tilde{\lambda}_{1}(u_{i-2,n}) \int_{I} \sin (u_{i-1,n} + \phi)  V_{i,n}(s) ds\\
& \qquad 
- \tilde{\lambda}_{2}(u_{i-1,n}) \int_{I} \cos (u_{i,n} + \phi)  V_{i,n}(s) ds 
+\tilde{\lambda}_{2}(u_{i-2,n}) \int_{I} \cos (u_{i-1,n} + \phi)  V_{i,n}(s) ds \\
& \leq  |\tilde{\lambda}_{1}(u_{i-1,n}) - \tilde{\lambda}_{1}(u_{i-2,n})| \int_{I} |V_{i,n}(s)| ds
+ |\tilde{\lambda}_{1}(u_{i-2,n})| \int_{I} |u_{i,n}(s) -u_{i-1,n}(s)| |V_{i,n}(s)| ds\\
& \quad +  |\tilde{\lambda}_{2}(u_{i-1,n}) - \tilde{\lambda}_{2}(u_{i-2,n})| \int_{I} |V_{i,n}(s)| ds
+ |\tilde{\lambda}_{2}(u_{i-2,n})| \int_{I} |u_{i,n}(s) -u_{i-1,n}(s)| |V_{i,n}(s)| ds.
\end{align*}
By Theorem~\ref{thm:3.2} 
we have  uniform bounds for $\| (u_{j,n})_{s}\|_{L^{p}}$,  
$ |\tilde{\lambda}_{1} (u_{j,n})|$ and 
$|\tilde{\lambda}_{2} (u_{j,n})|$  with $j \in \{1, \ldots,n\}$.  Together with \eqref{LIPlambda},   we obtain for any $i \in \{ 2, \ldots, n \}$ and $\epsilon >0$ that
\begin{align}\label{VSUM}
 &\frac{1}{2} \int_{I}| V_{i,n}(s)|^{2}  ds 
-\frac{1}{2} \int_{I}| V_{i-1,n}(s)|^{2}  ds
+ C\tau_{n}^{p-1} \int_{I}  |(V_{i,n})_{s}|^{p} ds  \notag\\
& \leq C \tau_{n} \int_{I} |V_{i,n}(s)|^{2} ds + C  (\|(u_{i-1,n}- u_{i-2,n})_{s} \|_{L^{p}(I)} + \|u_{i-1,n}- u_{i-2,n} \|_{L^{2}(I)})\|V_{i,n} \|_{L^{2}(I)} \notag \\
& \leq C_{\epsilon} \tau_{n} \int_{I} |V_{i,n}(s)|^{2} ds + \epsilon \frac{1}{\tau_{n}}\|(u_{i-1,n}- u_{i-2,n})_{s} \|_{L^{p}(I)}^{2} +\frac{1}{2}\tau_{n} \int_{I} |V_{i-1,n}(s)|^{2} ds  \notag \\
&= C_{\epsilon} \tau_{n} \int_{I} |V_{i,n}(s)|^{2} ds + \epsilon \tau_{n} \|(V_{i-1,n})_{s}  \|_{L^{p}(I)}^{2} + \frac{1}{2} \tau_{n} \|V_{i-1,n}  \|_{L^{2}(I)}^{2}.  
\end{align}
Next  we need some information about $V_{1,n}$.  Using the regularity assumptions on the initial data we can define (recall here $p=2$)
\begin{align*}
V_{0,n}(s) := (|(u_{0}+\phi)_{s}|^{p-2}(u_{0}+\phi)_{s})_{s} + \tilde{\lambda}_{1} (u_{0}) \sin (u_{0} +\phi) -\tilde{\lambda}_{2} (u_{0}) \cos (u_{0} +\phi).
\end{align*}
Testing with $\varphi \in W^{1,p}_{\rm per}(I)$, integrating by parts, using the periodicity of $|(u_{0}+\phi)_{s}|^{p-2}(u_{0}+\phi)_{s}$ and recalling that by defintion $u_{0,n}=u_{0}$, we see that $V_{0,n}$ satisfies
 \begin{align*}
0&=\int_{I} V_{0,n}(s) \varphi(s) ds -\tilde{\lambda}_{1}(u_{0,n}) \int_{I} \sin (u_{0,n} + \phi) \varphi(s) ds + \tilde{\lambda}_{2}(u_{0,n}) \int_{I} \cos (u_{0,n} + \phi) \varphi(s) ds \notag\\
& \qquad +\int_{I}  (|(u_{0,n}+\phi)_{s}|^{p-2}(u_{0,n} +  \phi)_{s}  \vp_{s} ds
\end{align*}
for all $\varphi \in W^{1,p}_{\rm per}(I)$.
Subtracting the equation for $V_{1,n}$ from the above one we infer
\begin{align*}
0&=\int_{I} (V_{1,n}(s) - V_{0,n}(s))\varphi(s) ds \\
& \qquad +\int_{I} ((|(u_{1,n}+\phi)_{s}|^{p-2}(u_{1,n} +  \phi)_{s} -(|(u_{0,n}+\phi)_{s}|^{p-2}(u_{0,n} +  \phi)_{s})  \vp_{s} ds\\
& \qquad 
-\tilde{\lambda}_{1}(u_{0,n}) \int_{I} (\sin (u_{1,n} + \phi)  - \sin (u_{0,n} + \phi))\varphi(s) ds \\
& \qquad 
+ \tilde{\lambda}_{2}(u_{0,n}) \int_{I} (\cos (u_{1,n} + \phi)  - \cos (u_{0,n} + \phi))\varphi(s) ds 
\end{align*}
for all $\varphi \in W^{1,p}_{\rm per}(I)$. Choosing $\varphi = V_{1,n}(s)  \in W^{1,p}_{\rm per}(I)$ and arguing as above we get
\begin{align}\label{passo1}
 \frac{1}{2} \int_{I}| V_{1,n}(s)|^{2}  ds &
-\frac{1}{2} \int_{I}| V_{0,n}(s)|^{2}  ds
+ C\tau_{n}^{p-1} \int_{I}  |(V_{1,n})_{s}|^{p} ds \\
&\leq   C  \|u_{1,n}- u_{0,n} \|_{L^{2}(I)} \|V_{1,n} \|_{L^{2}(I)} 
 \leq C \tau_{n} \int_{I} |V_{1,n}|^{2} ds. \notag 
\end{align}
This gives in particular that
\begin{align*}
 \int_{I}| V_{1,n}(s)|^{2}  ds +C\tau_{n}^{p-1} \int_{I}  |(V_{1,n})_{s}|^{p} ds &\leq   \int_{I}| V_{0,n}(s)|^{2}  ds
 + C \tau_{n} \int_{I} |V_{1,n}|^{2} ds \\
 &\leq \int_{I}| V_{0,n}(s)|^{2} + C \int_{0}^{T} \int_{I} |V_{n}(s,t)|^{2} ds dt.
\end{align*}
Together with \eqref{VSUM} and $p=2$ the above inequality  \eqref{passo1} yields for any $j \in \{1, \ldots n \}$
\begin{align*}
\sum_{i=1}^{j} &\left ( \frac{1}{2} \int_{I}| V_{i,n}(s)|^{2}  ds 
-\frac{1}{2} \int_{I}| V_{i-1,n}(s)|^{2}  ds
+ C\tau_{n} \int_{I}  |(V_{i,n})_{s}|^{2} ds \right) \\
& \leq C_{\epsilon} \sum_{i=1}^{n} \tau_{n}\int_{I}| V_{i,n}(s)|^{2}  ds  + \epsilon
\tau_{n} \sum_{i=1}^{j-1} \int_{I}  |(V_{i,n})_{s}|^{2} ds 
\end{align*}
With $\epsilon $ small enough we finally infer
\begin{align*}
\max_{i=1, \ldots, n}   \int_{I}| V_{i,n}(s)|^{2}  ds +\int_{0}^{T} \int_{I}|(V_{n}(s,t))_{s}|^{2} ds dt &\leq  C \int_{0}^{T} \int_{I} |V_{n}(s,t)|^{2} ds dt+ \int_{I}| V_{0,n}(s)|^{2}  ds .
\end{align*}
By Theorem~\ref{thm:3.2} and the smoothness of the initial data we get the claim.
\end{proof}

\subsubsection{The case $p=2$: Control of the Lagrange multipliers}

 Application of Lemma~\ref{controlVel} gives information about the regularity and convergence of the maps~$\Lambda_{n}^{j}$.
\begin{lem}\label{lem:Lambda}  
Let $p=2$ and  $u_{0} \in W^{2,p}_{\rm per}(I)$. Let the assumptions and notation of Theorem~\ref{thm:3.2} hold. Then
we  have that $\Lambda_{n}^{j} \in W^{1,2}(0,T)$ with $\| \Lambda_{n}^{j}\|_{W^{1,2}(0,T)} \leq C$ for  $j=1,2$ and $\Lambda_{n}^{j}$ converges uniformly  (and weakly in $W^{1,2}(0,T)$) to a continous map $\Lambda_{j} \in H^{1}(0,T)$ for $j=1,2$.
Moreover   we have that $\Lambda_{j}=\lambda_{j}$ for $j=1,2$.
\end{lem}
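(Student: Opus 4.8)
The plan is to first establish the uniform $W^{1,2}(0,T)$-bound for the $\Lambda_n^j$, then extract limits by compactness, and finally identify the limit with the map $\lambda_j$ produced in Lemma~\ref{lem:3.3}.

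\textbf{Uniform bound.} First I would invoke Lemma~\ref{lem:Lambdazero}, which supplies both the pointwise bound $|\Lambda_n^j(t)|\le C$ (whence $\|\Lambda_n^j\|_{L^2(0,T)}^2\le C^2T$) and the derivative bound $|(\Lambda_n^j)_t(t)|\le C\|V_n(\cdot,t)\|_{W^{1,p}(I)}$ for a.e.\ $t$. Since $p=2$ we have $W^{1,p}(I)=H^1(I)$, so squaring and integrating in time gives
\[
\int_0^T |(\Lambda_n^j)_t|^2\,dt \le C\int_0^T\Big(\|V_n(\cdot,t)\|_{L^2(I)}^2+\|(V_n(\cdot,t))_s\|_{L^2(I)}^2\Big)\,dt.
\]
The right-hand side is bounded uniformly in $n$ by Lemma~\ref{controlVel}: the first summand is at most $T\max_t\|V_n(\cdot,t)\|_{L^2(I)}^2\le C$ and the second is controlled directly. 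Hence $\|\Lambda_n^j\|_{W^{1,2}(0,T)}\le C$, independently of $n$.

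\textbf{Compactness.} As $W^{1,2}(0,T)$ is a Hilbert space, this bound yields a subsequence along which $\Lambda_n^j\rightharpoonup\Lambda_j$ weakly in $W^{1,2}(0,T)=H^1(0,T)$ for some limit $\Lambda_j$. The compact one-dimensional Sobolev embedding $W^{1,2}(0,T)\hookrightarrow\hookrightarrow C^0([0,T])$ then upgrades the convergence to uniform convergence, so that in particular $\Lambda_j$ is continuous.

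\textbf{Identification of the limit.} The delicate step is to show $\Lambda_j=\lambda_j$, where $\lambda_j$ is the weak $L^2$-limit of the piecewise constant maps $\tilde{\lambda}_n^j$ from Lemma~\ref{lem:3.3}. On each interval $((i-1)\tau_n,i\tau_n]$ the definitions give
\[
|\Lambda_n^j(t)-\tilde{\lambda}_n^j(t)|\le |\tilde{\lambda}_j(u_{i,n})-\tilde{\lambda}_j(u_{i-1,n})|,
\]
and this increment is controlled by estimate \eqref{est-lambda-forte} of Lemma~\ref{est-lambda}, whose hypotheses hold for $p=2$ by Lemma~\ref{lemma3bis} (with $V_{0,n}$ as in Lemma~\ref{controlVel}). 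Squaring, summing $\tau_n$-weighted over $i$, using the uniform bound $\max_i\|V_{i,n}\|_{L^2(I)}\le C$ to absorb the quadratic factors, and invoking the $L^2$-in-time velocity bound of Lemma~\ref{controlVel} once more, I expect
\[
\|\Lambda_n^j-\tilde{\lambda}_n^j\|_{L^2(0,T)}^2 \le C\tau_n^2\int_0^T\Big(\|(V_n)_s\|_{L^2(I)}^2+\|V_n\|_{L^2(I)}^2\Big)\,dt \le C\tau_n^2 \to 0.
\]
Since $\Lambda_n^j\rightharpoonup\Lambda_j$ and $\tilde{\lambda}_n^j\rightharpoonup\lambda_j$ weakly in $L^2(0,T)$ while their difference converges strongly to zero, testing against an arbitrary $\varphi\in L^2(0,T)$ forces $\Lambda_j=\lambda_j$ almost everywhere.

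The main obstacle I anticipate lies in this identification step: one must keep the discrete bookkeeping clean, in particular verifying that the prerequisite $L^\infty$-bound feeding \eqref{est-lambda-forte} holds uniformly in $i$ (including the initial index, via the regularity $u_0\in W^{2,p}_{\rm per}(I)$) and that the discrete sums collapse correctly onto the $L^2$-in-time velocity estimate. Everything ultimately rests on the $p=2$ velocity control of Lemma~\ref{controlVel}, which is exactly why the statement is confined to $p=2$.
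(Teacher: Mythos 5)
Your proposal is correct and follows essentially the same route as the paper: the uniform $W^{1,2}(0,T)$-bound comes from Lemma~\ref{lem:Lambdazero} combined with the $p=2$ velocity control of Lemma~\ref{controlVel}, and the identification $\Lambda_j=\lambda_j$ rests on the observation that $\Lambda_n^j-\tilde{\lambda}_n^j$ is of size $\tau_n$ times the (uniformly $L^2$-in-time bounded) derivative $(\Lambda_n^j)_t$. The only cosmetic difference is that you establish strong $L^2(0,T)$ convergence of $\Lambda_n^j-\tilde{\lambda}_n^j$ to zero, whereas the paper tests the difference directly against an arbitrary $\varphi\in L^2(0,T)$; these amount to the same computation.
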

\begin{proof}
With $p=2$ we can use  Lemma~\ref{lem:Lambdazero} and the bounds of Lemma~\ref{controlVel} to infer the statement on weak and uniform convergence.
To prove the last statement, consider  $\varphi \in L^{2}(0,T)$. Then we have for $j=1,2$:
\begin{align*}
\left |\int_{0}^{T} (\lambda_{j} -\Lambda_{j}) \varphi dt \right | \leq \left|\int_{0}^{T} (\lambda_{j}- \tilde{\lambda}^{j}_{n})\varphi   dt  \right| + \left
|\int_{0}^{T} ( \tilde{\lambda}^{j}_{n}-\Lambda_{n}^{j})\varphi   dt \right| + \left|\int_{0}^{T} ( \Lambda_{n}^{j} -\Lambda_{j})\varphi   dt \right| .
\end{align*}
The first and third integral on the right handside  go to zero for $n \to \infty$ on account of weak convergence (recall Lemma~\ref{lem:3.3}). For the second one observe that
\begin{align*}
\left|\int_{0}^{T} ( \tilde{\lambda}^{j}_{n}-\Lambda_{n}^{j})\varphi   dt \right| &= \left|\sum_{i=1}^{n} \int_{(i-1)\tau_{n}}^{i\tau_{n}} (t -(i-1)\tau_{n}) \frac{\tilde{\lm}_{j}(u_{i,n})-\tilde{\lm}_{j}(u_{i-1,n})}{\tau_{n}} \varphi  dt \right| \\& \leq  \tau_{n} \int_{0}^{T} |(\Lambda_{n}^{j})_{t}(t)||\varphi(t)| dt
\end{align*}
which goes to zero using the bounds of Lemma~\ref{controlVel}.
\end{proof}

\subsubsection{The case $p>2$: Control of the velocities for small initial data}

For the case $p>2$ we will be able to infer a control on the velocities   provided the initial data is small (in a sense that will be made precise below). We first derive some useful estimates. 
\begin{lem}\label{diffuinfty}
  Let $p \geq 2$ and 
assume that there exists a map $V_{0,n} \in L^{2}(I)$
 such that  
 \begin{align*}
0&=\int_{I} V_{0,n}(s) \varphi(s) ds -\tilde{\lambda}_{1}(u_{0,n}) \int_{I} \sin (u_{0,n} + \phi) \varphi(s) ds + \tilde{\lambda}_{2}(u_{0,n}) \int_{I} \cos (u_{0,n} + \phi) \varphi(s) ds \notag\\
& \qquad +\int_{I}  |(u_{0,n}+\phi)_{s}|^{p-2}(u_{0,n} +  \phi)_{s}  \vp_{s} ds
\end{align*}
for all $\varphi \in W^{1,p}_{\rm per}(I)$. Let the assumptions of Theorem~\ref{thm:3.2} hold. Set $V_{-1,n}:=0$. Then we have that for any $i=1, \ldots,n$ the bounds
\begin{align*}
\Big | |(u_{i,n}+\phi)_{s} |^{ p-1} -|(u_{i-1,n}+\phi)_{s} |^{ p-1}\Big | &\leq C \Big (
\| V_{i,n}-V_{i-1,n} \|_{L^{2}(I)} \\
& \qquad +   \tau_{n} \| (V_{i,n})_{s}  \|_{L^{2}(I)} +\tau_{n} \| V_{i,n}  \|_{L^{2}(I)}
\Big ) \qquad \quad  \text{ if $i=1$,}\\
\Big | |(u_{i,n}+\phi)_{s} |^{ p-1} -|(u_{i-1,n}+\phi)_{s} |^{ p-1}\Big | &\leq C \Big (
\| V_{i,n}-V_{i-1,n} \|_{L^{2}(I)} +   \tau_{n} \| (V_{i,n})_{s}  \|_{L^{2}(I)}
 + \tau_{n} \| V_{i,n}  \|_{L^{2}(I)} \\
 & \qquad + \tau_{n} \| (V_{i-1,n})_{s}  \|_{L^{2}(I)}+ \tau_{n} \| V_{i-1,n}  \|_{L^{2}(I)}
\Big ) \qquad \text{ if $i >1$,}\\
\Big | |(u_{i,n}+\phi)_{s} |^{ p-1} -|(u_{0,n}+\phi)_{s} |^{ p-1}\Big | &\leq C \Big (
\| V_{i,n}\|_{L^{2}(I)}+ \|V_{0,n} \|_{L^{2}(I)} \\
& \qquad +  \sum_{j=1}^{i}  \tau_{n} \| (V_{j,n})_{s}  \|_{L^{2}(I)}
+  \sum_{j=1}^{i}  \tau_{n} \| V_{j,n}  \|_{L^{2}(I)}
\Big )
\end{align*}
hold on $I$, provided  there exists  a constant $\hat{c}$ so that  $  \| V_{j,n} \|_{L^{2}(I)}\leq \hat{c}$ for all $j=0, \ldots,i$.
Here $C=C(\hat{c}, c^{*}, L, p)$. 
\end{lem}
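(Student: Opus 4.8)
The plan is to exploit the regularity established in the discussion preceding Lemma~\ref{lemma3bis}. Writing $a_{i} := (u_{i,n}+\phi)_{s}$ and $w_{i} := |a_{i}|^{p-2} a_{i}$, the key observation is that $|(u_{i,n}+\phi)_{s}|^{p-1} = |w_{i}|$, so by the reverse triangle inequality the quantity to be estimated satisfies $\big| |a_{i}|^{p-1} - |a_{i-1}|^{p-1} \big| \le |w_{i} - w_{i-1}|$ pointwise. From \eqref{regeq} and \eqref{periodicityproperty} each $w_{i}$ lies in $W^{2,p}_{\rm per}(I)$ with $(w_{i})_{s} = \xi_{i}$, where $\xi_{i} = V_{i,n} - \tilde{\lambda}_{1}(u_{i-1,n})\sin(u_{i,n}+\phi) + \tilde{\lambda}_{2}(u_{i-1,n})\cos(u_{i,n}+\phi)$; in particular $w_{i}-w_{i-1}$ is periodic, so the elementary bound $\|g\|_{L^{\infty}} \le \tfrac{1}{L}\int_{I}|g|\,ds + \int_{I}|g_{s}|\,ds$ (already used before Lemma~\ref{lemma3bis}) reduces everything to controlling $\int_{I}|w_{i}-w_{i-1}|\,ds$ and $\int_{I}|\xi_{i}-\xi_{i-1}|\,ds$.

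For the first ($L^{1}$) term I would apply the second inequality of Lemma~\ref{Lindquist} with $a=a_{i}$, $b=a_{i-1}$. Since $|a_{i}+t(a_{i-1}-a_{i})| \le \max(|a_{i}|,|a_{i-1}|)$ and $p\ge 2$, the integral factor is bounded by $(\|a_{i}\|_{L^{\infty}}+\|a_{i-1}\|_{L^{\infty}})^{p-2}$, which under the standing hypothesis $\|V_{j,n}\|_{L^{2}(I)}\le \hat{c}$ is bounded by a constant via the $L^{\infty}$-estimate of Lemma~\ref{lemma3bis}. Using $a_{i}-a_{i-1}=\tau_{n}(V_{i,n})_{s}$ then yields $\int_{I}|w_{i}-w_{i-1}|\,ds \le C\tau_{n}\|(V_{i,n})_{s}\|_{L^{2}(I)}$; this accounts for the $\tau_{n}\|(V_{i,n})_{s}\|_{L^{2}}$ term in all three claimed bounds.

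For the derivative term I would subtract the Euler--Lagrange identity \eqref{firstvar} for index $i$ from that for index $i-1$, giving $\int_{I}|\xi_{i}-\xi_{i-1}|\,ds$. The leading contribution $\int_{I}|V_{i,n}-V_{i-1,n}|\,ds \le \sqrt{L}\,\|V_{i,n}-V_{i-1,n}\|_{L^{2}(I)}$ produces the first term on the right. The remaining trigonometric contributions I would split in the standard way, writing $\tilde{\lambda}_{r}(u_{i-1,n})\sin(u_{i,n}+\phi)-\tilde{\lambda}_{r}(u_{i-2,n})\sin(u_{i-1,n}+\phi)$ as $[\tilde{\lambda}_{r}(u_{i-1,n})-\tilde{\lambda}_{r}(u_{i-2,n})]\sin(u_{i,n}+\phi) + \tilde{\lambda}_{r}(u_{i-2,n})[\sin(u_{i,n}+\phi)-\sin(u_{i-1,n}+\phi)]$, and similarly for the cosine. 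The second piece is controlled by $|\tilde{\lambda}_{r}|\le c_{1}$ together with $|\sin(u_{i,n}+\phi)-\sin(u_{i-1,n}+\phi)|\le \tau_{n}|V_{i,n}|$, giving $C\tau_{n}\|V_{i,n}\|_{L^{2}(I)}$. For the Lagrange-multiplier difference I would crucially invoke the strong estimate \eqref{est-lambda-forte} (whose hypothesis is supplied precisely by Lemma~\ref{lemma3bis} and the velocity bound $\hat{c}$), which after absorbing the factor $(1+\|V_{i-1,n}\|_{L^{2}}+\|V_{i-2,n}\|_{L^{2}})\le C$ yields $C\tau_{n}\|(V_{i-1,n})_{s}\|_{L^{2}(I)}+C\tau_{n}\|V_{i-1,n}\|_{L^{2}(I)}$. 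Collecting these gives the $i>1$ bound. For $i=1$ one observes that $\xi_{1}$ and the $\xi_{0}$ coming from the defining equation for $V_{0,n}$ both carry the identical multipliers $\tilde{\lambda}_{r}(u_{0,n})$, so the multiplier-difference term is absent and only $\|V_{1,n}-V_{0,n}\|_{L^{2}}$, $\tau_{n}\|(V_{1,n})_{s}\|_{L^{2}}$ and $\tau_{n}\|V_{1,n}\|_{L^{2}}$ survive.

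Finally, for the long-range ($0$ to $i$) bound I would not iterate the consecutive estimate, since summing $\|V_{j,n}-V_{j-1,n}\|_{L^{2}}$ does not telescope to $\|V_{i,n}\|_{L^{2}}+\|V_{0,n}\|_{L^{2}}$. Instead I would estimate $w_{i}-w_{0}$ directly: for the $L^{1}$ term use $a_{i}-a_{0}=\sum_{j=1}^{i}\tau_{n}(V_{j,n})_{s}$ to get $\sum_{j=1}^{i}\tau_{n}\|(V_{j,n})_{s}\|_{L^{2}}$, and for $\int_{I}|\xi_{i}-\xi_{0}|\,ds$ keep $\|V_{i,n}\|_{L^{2}}+\|V_{0,n}\|_{L^{2}}$ from the velocity difference, telescope $\tilde{\lambda}_{r}(u_{i-1,n})-\tilde{\lambda}_{r}(u_{0,n})=\sum_{j=1}^{i-1}[\tilde{\lambda}_{r}(u_{j,n})-\tilde{\lambda}_{r}(u_{j-1,n})]$ through \eqref{est-lambda-forte}, and bound $\int_{I}|\sin(u_{i,n}+\phi)-\sin(u_{0,n}+\phi)|\,ds$ by $\sum_{j=1}^{i}\tau_{n}\sqrt{L}\,\|V_{j,n}\|_{L^{2}}$. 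I expect the main obstacle to be bookkeeping the \emph{$L^{2}$} (rather than $L^{p}$) character of the gradient norms in the conclusion: this is exactly what forces the use of the refined multiplier estimate \eqref{est-lambda-forte} in place of the weaker \eqref{LIPlambda}, and the applicability of \eqref{est-lambda-forte} rests in turn on the uniform $L^{\infty}$ control of $(u_{j,n}+\phi)_{s}$ from Lemma~\ref{lemma3bis}, which is available only under the smallness/boundedness hypothesis $\|V_{j,n}\|_{L^{2}(I)}\le\hat{c}$.
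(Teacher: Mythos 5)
Your proposal is correct and follows essentially the same route as the paper's proof: the same reduction via $\big||w_i|-|w_{i-1}|\big|\le\|w_i-w_{i-1}\|_{L^\infty}\le\frac1L\int_I|w_i-w_{i-1}|+\int_I|(w_i-w_{i-1})_s|$, the same use of the second inequality of Lemma~\ref{Lindquist} together with the $L^\infty$-gradient bound of Lemma~\ref{lemma3bis} for the $L^1$ part, the same subtraction of consecutive Euler--Lagrange identities with \eqref{est-lambda-forte} for the multiplier differences, and the same direct (non-telescoped in the velocities) treatment of the $w_i-w_0$ estimate. You also correctly identify the two delicate points the paper relies on, namely the disappearance of the multiplier-difference term when $i=1$ and the necessity of \eqref{est-lambda-forte} rather than \eqref{LIPlambda}.
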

\begin{proof}
Set $w_{i}:=|(u_{i,n}+\phi)_{s}|^{p-2} (u_{i,n}+\phi)_{s}$, $i=0, \ldots,n$.
We have that 
\begin{align*}
\Big | |(u_{i,n}+\phi)_{s} |^{ p-1} -|(u_{i-1,n}+\phi)_{s} |^{ p-1}\Big | & =
\Big | |w_{i}| - | w_{i-1}|   \Big | \leq \| w_{i}-w_{i-1}\|_{L^{\infty}}.
\end{align*}
The same arguments (and notation) employed to derive Lemma~\ref{lemma3bis} (recall \eqref{regeq}) yield that
\begin{align*}
(w_{i})_{s} &= V_{i,n} -\tilde{\lambda}_{1}(u_{i-1,n}) \sin (u_{i,n}+\phi) + \tilde{\lambda}_{2}(u_{i-1,n}) \cos (u_{i,n}+\phi) 
\end{align*}
for all $i=0, \ldots, n$ where we have set $u_{-1,n}:=u_{0,n}$. 
Note also  that by  the second inequality  in Lemma~\ref{Lindquist}, the fourth inequality in Lemma~\ref{lemma3bis} and the assumption 
$\| V_{i,n} \|_{L^{2}(I)}$, $ \| V_{i-1,n} \|_{L^{2}(I)} \leq \hat{c}$ we have
\begin{align}\label{muffin1}
|w_{i}-w_{i-1}| \leq C |(u_{i,n}-u_{i-1,n})_{s}|
\end{align}
with $C=C(\hat{c},p, c^{*},L)$. Therefore, using embedding theory, we can write
\begin{align*}
\| w_{i}-w_{i-1}\|_{L^{\infty}} &\leq \frac{1}{L}\int_{I} |w_{i}-w_{i-1}| ds + \int_{I} |(w_{i}-w_{i-1})_{s}| ds \\
& \leq C \int_{I} |(u_{i,n}-u_{i-1,n})_{s}| + |V_{i,n}-V_{i-1,n}| + |\tilde{\lambda}_{1}(u_{i-1,n}) - \tilde{\lambda}_{1}(u_{i-2,n})|ds\\
& \qquad + C\int_{I}|\tilde{\lambda}_{2}(u_{i-1,n}) - \tilde{\lambda}_{2}(u_{i-2,n})| + |u_{i,n}-u_{i-1,n}| ds\\
& \leq  C  \left (\| u_{i,n}-u_{i-1,n}\|_{W^{1,2}(I)} +  \| V_{i,n}-V_{i-1,n} \|_{L^{2}(I)} +  C \tau_{n} \| V_{i-1,n} \|_{W^{1,2}(I)}
\right )
\end{align*}
where we have used \eqref{lemconti}, the mean value theorem, and \eqref{est-lambda-forte} in the last inequality. The first two claims now follow.
The last claim is obtained in a similar way. More precisely we observe that
\begin{align*}
\Big | |(u_{i,n}+\phi)_{s} |^{ p-1} -|(u_{0,n}+\phi)_{s} |^{ p-1}\Big | & =
\Big | |w_{i}| - | w_{0}|   \Big | \leq \| w_{i}-w_{0}\|_{L^{\infty}} \\&
\leq
\frac{1}{L}\int_{I} |w_{i}-w_{0}| ds + \int_{I} |(w_{i}-w_{0})_{s}| ds,
\end{align*}
as well as
\begin{align*}
|w_{i}-w_{0}| \leq \sum_{j=1}^{i} |w_{j}-w_{j-1}| \leq C \sum_{j=1}^{i}  |(u_{j,n}-u_{j-1,n})_{s}|
\end{align*}
by \eqref{muffin1}. For the derivative we write instead
\begin{align*}
(w_{i}-w_{0})_{s}& =
V_{i,n}  -V_{0,n}  +
 \sum_{j=1}^{i} \Big ( -\tilde{\lambda}_{1}(u_{j-1,n}) \sin (u_{j,n}+\phi)  + \tilde{\lambda}_{1}(u_{j-2,n}) \sin (u_{j-1,n}+\phi) \Big) \\
 & \quad + \sum_{j=1}^{i} \Big (\tilde{\lambda}_{2}(u_{j-1,n}) \cos (u_{j,n}+\phi) - \tilde{\lambda}_{2}(u_{j-2,n}) \cos (u_{j-1,n}+\phi) \Big)
\end{align*}
and then argue as above.
\end{proof}

With more information about the bounds for $u_{i,n}$ we can now extend the results of Lemma~\ref{controlVel} to a wider class of $p$. To do that we will assume some smallness assumption on the initial data.
In that respect note that if we take $u_{0}=0$, then $\theta_{0}(s)=\phi(s)=\frac{2\pi \eta}{L}s$, which corresponds to an initial planar curve $\gamma_{0}=\gamma_{0}(s)$ (parametrized by arc length) with tangent $\gamma_{0}'(s)= (\cos \theta_{0}(s), \sin \theta_{0}(s))$, $s \in [0,L]$. In other words $\theta_{0}$ corresponds to a   circle of radius $L/2\pi \eta$ (multiply covered in case $ \eta>1$). Note also that if $u_{0}=0$, then $\tilde{\lambda}_{r}(u_{0})=0$ for $r=1,2$ and $V_{0,n}=0$ with $V_{0,n}$ defined as in \eqref{defVzeron}.
Next we show that we can control the velocities on a possibly smaller time interval $[0,T^{*}] \subset [0,T]$.

\begin{lem}\label{lem:controlVelpBig} Let $p \geq 2$. Let the assumptions and notation of Theorem~\ref{thm:3.2} hold.
 Let $u_{0} \in W^{1,p}_{\rm per}(I) $ be such that the weak derivative $(|(u_{0}+\phi)_{s}|^{p-2}(u_{0}+\phi)_{s})_{s}$ exists and belongs to $L^{2}(I)$ and $[|(u_{0}+\phi)_{s}|^{p-2}(u_{0}+\phi)_{s}]_{0}^{L}=0$.
Define
\begin{align}\label{defVzeron}
V_{0,n}(s) := (|(u_{0}+\phi)_{s}|^{p-2}(u_{0}+\phi)_{s})_{s} + \tilde{\lambda}_{1} (u_{0}) \sin (u_{0} +\phi) -\tilde{\lambda}_{2} (u_{0}) \cos (u_{0} +\phi),
\end{align}
and set $c_{0}:=(\frac{1}{2})^{\frac{p-2}{p-1}}|\phi_{s}|^{p-2}=(\frac{1}{2})^{\frac{p-2}{p-1}}|\frac{2\pi \eta}{L}|^{p-2}$. 
Assume that the initial data $u_{0}$ is so small so that  
we have 
\begin{align}\label{SC} \tag{SC}
\left \{ \begin{array}{l}
c_{*}=2 \|( u_{0} + \phi)_{s} \|_{L^{p}(I)}^{p} \leq 4 \| \phi_{s} \|_{L^{p}(I)}^{p}\\
\| V_{0,n}\|_{L^{2}(I)}^{2}  \leq \delta \leq \min \{1/2, \tilde{C} \}\\ 
    |(u_{0} + \phi)_{s}|^{p-1} \geq \frac{1}{2} |\phi_{s}|^{p-1} \quad\text{almost everywhere on } I,
    \end{array}  \right .
\end{align}
with $\tilde{C}$  a fixed constant that depends on $c_{0}$, $L$, $p$.
Let  $0<T< \delta $ and let $n \geq n_{0}=n_{0}(c_{0},L,p)$ be sufficiently large.
Then 
$$ |(u_{0} + \phi)_{s}|^{p-2} \geq  c_{0} \quad \text{ almost everywhere on } I, $$ 
and 
for any $i=1, \ldots, n$ we have
$$\| V_{i,n}\|_{L^{2}(I)} \leq 1, \qquad  |(u_{i,n}+\phi)_{s} |^{ p-1} \geq\frac{1}{2} (\frac{1}{2}|\phi_{s}|^{p-1}) \quad\text{almost everywhere on } I,$$
and
\begin{align*}
  \int_{I}| V_{i,n}(s)|^{2}  ds 
+&  \frac{ c_{0}}{2}  \sum_{j=1}^{i}\tau_{n} \int_{I}  |(V_{j,n})_{s}|^{2} ds  
+ \sum_{j=1}^{i}\int_{I} |V_{j,n}(s) - V_{j-1,n}(s)|^{2} ds  
 \\
& \leq C \left(\int_{I}| V_{0,n}(s)|^{2}  ds + \sum_{j=1}^{i-1}  \tau_{i-1} \int_{I} |V_{j,n}(s)|^{2} ds \right ) \leq 2C\delta,
\end{align*}
where $C$ is a constant that depends only on $c_{0}$, $L$, $p$.
\end{lem}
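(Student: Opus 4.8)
The plan is to argue by continuation (strong induction) on the index $i$, propagating simultaneously three facts: the velocity bound $\|V_{i,n}\|_{L^2(I)}\le1$, the pointwise gradient lower bound $|(u_{i,n}+\phi)_s|^{p-1}\ge\frac14|\phi_s|^{p-1}$, and the asserted discrete energy estimate. The very first assertion is immediate: since $p\ge2$ the exponent $\frac{p-2}{p-1}$ is nonnegative, so raising the third line of \eqref{SC} to this power and using that $\phi_s\equiv 2\pi\eta/L$ is constant gives $|(u_0+\phi)_s|^{p-2}\ge(\tfrac12)^{\frac{p-2}{p-1}}|\phi_s|^{p-2}=c_0$ a.e. The base case holds because $\|V_{0,n}\|_{L^2}^2\le\delta\le\frac12$ and the third line of \eqref{SC} already yields $|(u_0+\phi)_s|^{p-1}\ge\frac12|\phi_s|^{p-1}\ge\frac14|\phi_s|^{p-1}$.

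The core of the inductive step mirrors the discrete energy computation of Lemma~\ref{controlVel}, but with coercivity now coming from the \emph{degenerate} structure. Writing \eqref{firstvar} for the minimizers $u_{i,n}$ and $u_{i-1,n}$ (and, when $i=1$, using the equation satisfied by the given $V_{0,n}$), I subtract the two identities and test with $\varphi=V_{i,n}\in W^{1,p}_{\rm per}(I)$. For the difference of the $p$-Laplacian terms I invoke the first inequality of Lemma~\ref{Lindquist} with $a=(u_{i,n}+\phi)_s$, $b=(u_{i-1,n}+\phi)_s$, and since $a-b=\tau_n(V_{i,n})_s$ this gives
\begin{align*}
\frac{1}{\tau_n}\int_I\big\langle|a|^{p-2}a-|b|^{p-2}b,\,a-b\big\rangle\,ds\ \ge\ \frac{\tau_n}{2}\int_I|b|^{p-2}\,|(V_{i,n})_s|^2\,ds.
\end{align*}
The crucial point is that the pointwise bound $|b|^{p-2}=|(u_{i-1,n}+\phi)_s|^{p-2}\ge c_0$ (up to the harmless degradation recorded in the statement) is supplied by the \emph{previous} step of the induction, so coercivity at step $i$ needs no information about $u_{i,n}$ itself and converts the right-hand side into $\frac{c_0}{2}\tau_n\int_I|(V_{i,n})_s|^2\,ds$. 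The algebraic identity $a(a-b)=\frac12a^2+\frac12|a-b|^2-\frac12b^2$ then produces the telescoping pair $\frac12\|V_{i,n}\|_{L^2}^2-\frac12\|V_{i-1,n}\|_{L^2}^2$ together with the retained nonnegative term $\frac12\|V_{i,n}-V_{i-1,n}\|_{L^2}^2$.

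It remains to bound the four Lagrange-multiplier contributions. These split into factors $|\tilde\lambda_r(u_{i-1,n})-\tilde\lambda_r(u_{i-2,n})|$, estimated by \eqref{est-lambda-forte} of Lemma~\ref{est-lambda}, and factors $\int_I|u_{i,n}-u_{i-1,n}|\,|V_{i,n}|\,ds$, estimated by the uniform bounds of Theorem~\ref{thm:3.2}; I note that the hypothesis $\|(u_{i,n}+\phi)_s\|_{L^\infty}^{p-1}\le C(1+\|V_{i,n}\|_{L^2})$ needed for \eqref{est-lambda-forte} is exactly what Lemma~\ref{lemma3bis} furnishes. After Young's inequality these terms are absorbed into $C_\epsilon\tau_n\|V_{i,n}\|_{L^2}^2+\epsilon\tau_n\|(V_{i,n})_s\|_{L^2}^2$ plus lower-order multiples of the earlier velocities; summing over $j=1,\dots,i$ and applying a discrete Gr\"onwall inequality (for which $n\ge n_0$, i.e. $\tau_n$ small, guarantees $C\tau_n<1$ so that the absorption closes) yields the energy estimate at level $i$ with right-hand side $\le2C\delta$. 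In particular $\|V_{i,n}\|_{L^2}^2\le2C\delta$, and choosing $\tilde{C}$ small forces $\|V_{i,n}\|_{L^2}\le1$.

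Finally I reinstate the gradient bound at step $i$: feeding the velocity bounds $\|V_{j,n}\|_{L^2}\le1$ and the just-obtained control of $\sum_j\tau_n\|(V_{j,n})_s\|_{L^2}^2$ into the third estimate of Lemma~\ref{diffuinfty}, and using $T<\delta\le\frac12$ together with Cauchy--Schwarz on the sums, every term on its right-hand side is $O(\sqrt\delta)$. Hence $\big||(u_{i,n}+\phi)_s|^{p-1}-|(u_0+\phi)_s|^{p-1}\big|\le C\sqrt\delta$ a.e., which for $\delta\le\tilde{C}$ small is $\le\frac14|\phi_s|^{p-1}$; combined with $|(u_0+\phi)_s|^{p-1}\ge\frac12|\phi_s|^{p-1}$ this gives $|(u_{i,n}+\phi)_s|^{p-1}\ge\frac14|\phi_s|^{p-1}$ and closes the induction. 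The main obstacle is precisely this coupling: coercivity for the degenerate ($p>2$) operator requires the gradient to stay bounded away from zero, the gradient lower bound in turn relies on the velocity bounds through Lemma~\ref{diffuinfty}, and the velocity estimate itself relies on the Lagrange-multiplier control of Lemma~\ref{est-lambda} and Lemma~\ref{lemma3bis}; the three can only be propagated together, and it is exactly the smallness encoded in \eqref{SC} (through $\delta$ and $\tilde{C}$) and the largeness of $n$ that make the loop close.
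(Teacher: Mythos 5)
Your proposal is correct and follows essentially the same route as the paper's proof: the same induction propagating the velocity bound, the pointwise gradient lower bound and the energy estimate together, with coercivity at step $i$ supplied by Lemma~\ref{Lindquist} and the lower bound $|(u_{i-1,n}+\phi)_s|^{p-2}\ge c_0/2$ from step $i-1$, the Lagrange multipliers controlled via \eqref{est-lambda-forte} and Lemma~\ref{lemma3bis}, and the gradient bound reinstated through the third estimate of Lemma~\ref{diffuinfty}. The only cosmetic discrepancy is that the $\epsilon$-term produced by \eqref{est-lambda-forte} involves $\|(V_{i-1,n})_s\|_{L^2}$ rather than $\|(V_{i,n})_s\|_{L^2}$, which does not affect the absorption after summation.
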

\begin{proof}
We use an induction argument.
From \eqref{firstvar} with $u=u_{i,n}\in W^{1,p}_{\rm per}(I)$ solution to \eqref{Min} we obtain
\begin{align*}
0&=\int_{I} V_{i,n}(s) \varphi(s) ds -\tilde{\lambda}_{1}(u_{i-1,n}) \int_{I} \sin (u_{i,n} + \phi) \varphi(s) ds + \tilde{\lambda}_{2}(u_{i-1,n}) \int_{I} \cos (u_{i,n} + \phi) \varphi(s) ds \notag\\
& \qquad +\int_{I} \left( \left|(u_{i,n} +  \phi)_{s}\right|^{p-2} (u_{i,n} +  \phi)_{s} \right ) \vp_{s} ds
\end{align*}
for all $\varphi \in W^{1,p}_{\rm per}(I)$.
Replacing $i$ with $i-1$ we obtain an equation for $V_{i-1,n}$. Subtraction of these two equations gives
then (here $i \geq2$)
\begin{align*}
0&=\int_{I} (V_{i,n}(s) - V_{i-1,n}(s))\varphi(s) ds \\
& \qquad +\int_{I} \left( \left|(u_{i,n} +  \phi)_{s}\right|^{p-2} (u_{i,n} +  \phi)_{s}  -   \left|(u_{i-1,n} +  \phi)_{s}\right|^{p-2} (u_{i-1,n} +  \phi)_{s} \right )  \vp_{s} ds\\
& \qquad 
-\tilde{\lambda}_{1}(u_{i-1,n}) \int_{I} \sin (u_{i,n} + \phi) \varphi(s) ds 
+ \tilde{\lambda}_{1}(u_{i-2,n}) \int_{I} \sin (u_{i-1,n} + \phi) \varphi(s) ds\\
& \qquad 
+ \tilde{\lambda}_{2}(u_{i-1,n}) \int_{I} \cos (u_{i,n} + \phi) \varphi(s) ds 
-\tilde{\lambda}_{2}(u_{i-2,n}) \int_{I} \cos (u_{i-1,n} + \phi) \varphi(s) ds 
\end{align*}
for all $\varphi \in W^{1,p}_{\rm per}(I)$. Choosing $\varphi = V_{i,n}(s)  \in W^{1,p}_{\rm per}(I)$  and using the first inequality in  Lemma~\ref{Lindquist} we get
\begin{align*}
0&\geq \int_{I} (V_{i,n}(s) - V_{i-1,n}(s)) V_{i,n}(s) ds + \frac{1}{2}\frac{1}{\tau_{n}} \int_{I} |(u_{i-1,s}+\phi)_{s}|^{p-2} |(u_{i,n} -u_{i-1,n})_{s}|^{2} ds\\
& \qquad 
-\tilde{\lambda}_{1}(u_{i-1,n}) \int_{I} \sin (u_{i,n} + \phi)  V_{i,n}(s) ds 
+ \tilde{\lambda}_{1}(u_{i-2,n}) \int_{I} \sin (u_{i-1,n} + \phi)  V_{i,n}(s) ds\\
& \qquad 
+ \tilde{\lambda}_{2}(u_{i-1,n}) \int_{I} \cos (u_{i,n} + \phi)  V_{i,n}(s) ds 
-\tilde{\lambda}_{2}(u_{i-2,n}) \int_{I} \cos (u_{i-1,n} + \phi)  V_{i,n}(s) ds .
\end{align*}
Using the simple equality $ a(a-b)=\frac{1}{2} a^{2} +\frac{1}{2}|a-b|^{2} - \frac{1}{2} b^{2}$ we can write
\begin{align*}
 &\frac{1}{2} \int_{I}| V_{i,n}(s)|^{2}  ds 
-\frac{1}{2} \int_{I}| V_{i-1,n}(s)|^{2}  ds
+ \frac{1}{2}\tau_{n} \int_{I} |(u_{i-1,s}+\phi)_{s}|^{p-2}  |(V_{i,n})_{s}|^{2} ds\\
& +\frac{1}{2} \int_{I} |V_{i,n}(s) - V_{i-1,n}(s)|^{2} ds     \\& \leq
\tilde{\lambda}_{1}(u_{i-1,n}) \int_{I} \sin (u_{i,n} + \phi)  V_{i,n}(s) ds 
- \tilde{\lambda}_{1}(u_{i-2,n}) \int_{I} \sin (u_{i-1,n} + \phi)  V_{i,n}(s) ds\\
& \qquad 
- \tilde{\lambda}_{2}(u_{i-1,n}) \int_{I} \cos (u_{i,n} + \phi)  V_{i,n}(s) ds 
+\tilde{\lambda}_{2}(u_{i-2,n}) \int_{I} \cos (u_{i-1,n} + \phi)  V_{i,n}(s) ds \\
& \leq  |\tilde{\lambda}_{1}(u_{i-1,n}) - \tilde{\lambda}_{1}(u_{i-2,n})| \int_{I} |V_{i,n}(s)| ds
+ |\tilde{\lambda}_{1}(u_{i-2,n})| \int_{I} |u_{i,n}(s) -u_{i-1,n}(s)| |V_{i,n}(s)| ds\\
& \quad +  |\tilde{\lambda}_{2}(u_{i-1,n}) - \tilde{\lambda}_{2}(u_{i-2,n})| \int_{I} |V_{i,n}(s)| ds
+ |\tilde{\lambda}_{2}(u_{i-2,n})| \int_{I} |u_{i,n}(s) -u_{i-1,n}(s)| |V_{i,n}(s)| ds.
\end{align*}
By Theorem~\ref{thm:3.2}  
we have  uniform bounds for $\| (u_{j,n})_{s}\|_{L^{p}}$,  
$ |\tilde{\lambda}_{1} (u_{j,n})|$ and 
$|\tilde{\lambda}_{2} (u_{j,n})|$  with $j \in \{1, \ldots,n\}$. Note that thanks to the first assumption in \eqref{SC} these bounds do  not depend on $u_{0}$ any longer. Together with Lemma~\ref{lemma3bis} (fourth statement), and \eqref{est-lambda-forte},    we obtain for any $i \in \{ 2, \ldots, n \}$ and $\epsilon >0$ that
\begin{align*}
 &\frac{1}{2} \int_{I}| V_{i,n}(s)|^{2}  ds 
-\frac{1}{2} \int_{I}| V_{i-1,n}(s)|^{2}  ds
+\frac{1}{2}\tau_{n} \int_{I} |(u_{i-1,s}+\phi)_{s}|^{p-2}  |(V_{i,n})_{s}|^{2} ds \notag\\
& +\frac{1}{2} \int_{I} |V_{i,n}(s) - V_{i-1,n}(s)|^{2} ds  \notag \\
& \leq C \tau_{n} \int_{I} |V_{i,n}(s)|^{2} ds + C \tau_{n}(1+ \|V_{i-1,n}\|_{L^{2}(I)} + \|V_{i-2,n}\|_{L^{2}(I)}) \|(V_{i-1,n})_{s}\|_{L^{2}(I)} \|V_{i,n}\|_{L^{2}(I)} \notag\\
& \qquad +  C \tau_{n}\|V_{i,n}\|_{L^{2}(I)} \|V_{i-1,n}\|_{L^{2}(I)} \notag \\
&   \leq C(1+\frac{1}{\epsilon}) \tau_{n} \int_{I} |V_{i,n}(s)|^{2} ds +  \epsilon \tau_{n}(1+ \|V_{i-1,n}\|_{L^{2}(I)} + \|V_{i-2,n}\|_{L^{2}(I)})^{2} \|(V_{i-1,n})_{s}\|_{L^{2}(I)}^{2}\\
& \qquad +  \frac{1}{2}\tau_{n}\|V_{i-1,n}\|_{L^{2}(I)}^{2} .
\end{align*}
We proceed with an induction argument.
If we know that
\begin{align*}
|(u_{j,n}+\phi)_{s} |^{ p-1} \geq\frac{1}{2} (\frac{1}{2}|\phi_{s}|^{p-1}) \quad\text{almost everywhere on } I,\\
|(u_{j,n}+\phi)_{s} |^{ p-2} \geq (\frac{1}{2})^{\frac{p-2}{p-1}} c_{0}\geq \frac{c_{0}}{2} \quad\text{almost everywhere on } I,\\
\|V_{j,n} \|_{L^{2}} \leq 1, 
\end{align*}
for all $j=0, \ldots, i-1$ 
and set $\epsilon=\frac{1}{3^{2}\cdot 8} c_0 $ we obtain
\begin{align}\label{VSUM2bis}
 &\frac{1}{2} \int_{I}| V_{i,n}(s)|^{2}  ds 
-\frac{1}{2} \int_{I}| V_{i-1,n}(s)|^{2}  ds 
+\frac{1}{4 } c_{0} \tau_{n} \int_{I}  |(V_{i,n})_{s}|^{2} ds \\
& \quad +\frac{1}{2} \int_{I} |V_{i,n}(s) - V_{i-1,n}(s)|^{2} ds  \notag \\
&   \leq C \tau_{n} \int_{I} |V_{i,n}(s)|^{2} ds +   \tau_{n} \frac{c_{0}}{8} \|(V_{i-1,n})_{s}\|_{L^{2}(I)}^{2} +  \frac{1}{2}\tau_{n}\|V_{i-1,n}\|_{L^{2}(I)}^{2} .\notag
\end{align}
Summming \eqref{VSUM2bis} up over $j=1, \ldots,i$  (for the initial step $j=1$ see \eqref{passo1bis} below) and using that $T < \delta$ we get for $1 \leq i \leq n$:
\begin{align*}
 &\frac{1}{2} \int_{I}| V_{i,n}(s)|^{2}  ds 
- \frac{1}{2}\int_{I}| V_{0,n}(s)|^{2}  ds
+  \frac{ c_{0}}{4}  \tau_{n} \int_{I}  |(V_{i,n})_{s}|^{2} ds  + \sum_{j=1}^{i-1}  \frac{ c_{0}}{8}  \tau_{n} \int_{I}  |(V_{j,n})_{s}|^{2} ds\notag\\
& 
+ \frac{1}{2}\sum_{j=1}^{i}\int_{I} |V_{j,n}(s) - V_{j-1,n}(s)|^{2} ds  
\leq C\tau_{n} \int_{I} |V_{i,n}(s)|^{2} ds + \sum_{j=1}^{i-1} C \tau_{n} \int_{I} |V_{j,n}(s)|^{2} ds  \\
& \leq C\tau_{n} \int_{I} |V_{i,n}(s)|^{2} ds+ C n\tau_{n} \leq C\tau_{n} \int_{I} |V_{i,n}(s)|^{2} ds + C\delta.
\end{align*}
With $\tau_{n} \leq \frac{1}{4C}$ we can absorb the integral term on the right hand-side and get the claimed estimates.
Moreover $\|V_{i,n} \|_{L^{2}} \leq 1$ also follows  for $\tilde{C}$ appropiately chosen,
so that by the third claim in Lemma~\ref{diffuinfty} we infer
\begin{align*}
\Big | |(u_{i,n}+\phi)_{s} |^{ p-1} -|(u_{0,n}+\phi)_{s} |^{ p-1}\Big | &\leq C \Big (
\| V_{i,n}\|_{L^{2}(I)}+ \|V_{0,n} \|_{L^{2}(I)} \\
& \qquad +  \sum_{j=1}^{i}  \tau_{n} \| (V_{j,n})_{s}  \|_{L^{2}(I)}
+ \sum_{j=1}^{i}  \tau_{n} \| V_{j,n}  \|_{L^{2}(I)}
\Big )
\end{align*}
and therefore
\begin{align*}
& |(u_{i,n}+\phi)_{s} |^{ p-1} 
\geq |(u_{0,n}+\phi)_{s} |^{ p-1} - \Big   | |(u_{i,n}+\phi)_{s} |^{ p-1} -|(u_{0,n}+\phi)_{s} |^{ p-1}\Big |\\
& \quad \geq \frac{1}{2}|\phi_{s}|^{p-1} - C ( \| V_{i,n}\|_{L^{2}(I)} + \| V_{0,n} \|_{L^{2}(I)} +\sum_{j=1}^{i}   \tau_{n} \| (V_{j,n})_{s} \|_{L^{2}(I)} + \sum_{j=1}^{i}  \tau_{n} \| V_{j,n}  \|_{L^{2}(I)}
)\\
&\quad \geq  \frac{1}{2}|\phi_{s}|^{p-1} - C  \sqrt{\delta}  \geq\frac{1}{4} |\phi_{s}|^{p-1}
\end{align*} 
where we have used that
\begin{align*}
&\sum_{j=1}^{i}  \tau_{n} \| V_{j,n}  \|_{L^{2}(I)}
+\sum_{j=1}^{i} \tau_{n} \| (V_{j,n})_{s} \|_{L^{2}(I)}  \leq (i\tau_{n})^{1/2} (\sum_{j=1}^{i}\tau_{n} \| V_{j,n} \|_{L^{2}(I)}^{2} )^{1/2}\\
& \qquad + (i\tau_{n})^{1/2} (\sum_{j=1}^{i}\tau_{n} \| (V_{j,n})_{s} \|_{L^{2}(I)}^{2} )^{1/2} \leq \sqrt{T} C\sqrt{\delta},\\
&\| V_{i,n}\|_{L^{2}(I)} + \| V_{0,n} \|_{L^{2}(I)} \leq C \sqrt{\delta},
\end{align*}
and provided $\delta $ is small enough (thus we may have to take $\tilde{C}$ even smaller). This gives
$$ |(u_{i,n}+\phi)_{s} |^{ p-1} \geq  \frac{1}{2}(\frac{1}{2}|\phi_{s}|^{p-1}) \text{ and } |(u_{i,n}+\phi)_{s} |^{ p-2} \geq (\frac{1}{2})^{\frac{p-2}{p-1}} c_{0}\geq \frac{c_{0}}{2}.$$
Next  we need some information about $V_{1,n}$, in order to show the first induction step.  Recall that by assumption 
\begin{align*}
V_{0,n}(s) = (|(u_{0}+\phi)_{s}|^{p-2}(u_{0}+\phi)_{s})_{s} + \tilde{\lambda}_{1} (u_{0}) \sin (u_{0} +\phi) -\tilde{\lambda}_{2} (u_{0}) \cos (u_{0} +\phi).
\end{align*}
Testing with $\varphi \in W^{1,p}_{\rm per}(I)$, integrating by parts, using the periodicity of $|(u_{0}+\phi)_{s}|^{p-2}(u_{0}+\phi)_{s}$ and recalling that by defintion $u_{0,n}=u_{0}$, we see that $V_{0,n}$ satisfies
 \begin{align*}
0&=\int_{I} V_{0,n}(s) \varphi(s) ds -\tilde{\lambda}_{1}(u_{0,n}) \int_{I} \sin (u_{0,n} + \phi) \varphi(s) ds + \tilde{\lambda}_{2}(u_{0,n}) \int_{I} \cos (u_{0,n} + \phi) \varphi(s) ds \notag\\
& \qquad +\int_{I}  |(u_{0,n}+\phi)_{s}|^{p-2}(u_{0,n} +  \phi)_{s}  \vp_{s} ds
\end{align*}
for all $\varphi \in W^{1,p}_{\rm per}(I)$.
Subtracting the equation for $V_{1,n}$ from the above one we infer
\begin{align*}
0&=\int_{I} (V_{1,n}(s) - V_{0,n}(s))\varphi(s) ds \\& \quad +\int_{I} ((|(u_{1,n}+\phi)_{s}|^{p-2}(u_{1,n} +  \phi)_{s} -(|(u_{0,n}+\phi)_{s}|^{p-2}(u_{0,n} +  \phi)_{s})  \vp_{s} ds\\
& \qquad 
-\tilde{\lambda}_{1}(u_{0,n}) \int_{I} (\sin (u_{1,n} + \phi)  - \sin (u_{0,n} + \phi))\varphi(s) ds \\
& \qquad 
+ \tilde{\lambda}_{2}(u_{0,n}) \int_{I} (\cos (u_{1,n} + \phi)  - \cos (u_{0,n} + \phi))\varphi(s) ds 
\end{align*}
for all $\varphi \in W^{1,p}_{\rm per}(I)$. Choosing $\varphi = V_{1,n}(s)  \in W^{1,p}_{\rm per}(I)$, applying Lemma~\ref{Lindquist} as above and using the smallness of the initial data (that guarantees  that $|(u_{0,n}+\phi)_{s}|^{p-2} \geq c_{0}>0$) we get
\begin{align}\label{passo1bis}
 \frac{1}{2} \int_{I}| V_{1,n}(s)|^{2}  ds &
-\frac{1}{2} \int_{I}| V_{0,n}(s)|^{2}  ds
+ \frac{c_{0}}{8}\tau_{n} \int_{I}  |(V_{1,n})_{s}|^{2} ds
+ \frac{1}{2} \int_{I}| V_{1,n}(s) -V_{0,n} (s)|^{2}  ds \notag \\
&\leq   C  \|u_{1,n}- u_{0,n} \|_{L^{2}(I)} \|V_{1,n} \|_{L^{2}(I)} 
 \leq C \tau_{n} \int_{I} |V_{1,n}|^{2} ds.
\end{align}
With $\frac{1}{2}-C\tau_{n} \geq \frac{1}{4}$ we infer that
\begin{align*}
\int_{I}| V_{1,n}(s)|^{2}  ds &+\frac{c_{0}}{2}\tau_{n} \int_{I}  |(V_{1,n})_{s}|^{2} ds  +  \int_{I}| V_{1,n}(s) -V_{0,n} (s)|^{2}  ds \\
 &\leq  2 \int_{I}| V_{0,n}(s)|^{2}  ds \leq C\delta
\end{align*}
 Using now that $\|V_{1,n}\|_{L^{2}}^{2},\|V_{0,n}\|_{L^{2}}^{2} \leq 1 $, 
and Lemma~\ref{diffuinfty} we obtain that
\begin{align*}
\Big | |(u_{1,n}+\phi)_{s} |^{ p-1} -|(u_{0,n}+\phi)_{s} |^{ p-1}\Big | &\leq C \Big (
\| V_{1,n}-V_{0,n} \|_{L^{2}(I)} \\
& \qquad +  C \tau_{n} \| (V_{1,n})_{s} \|_{L^{2}(I)} + C \tau_{n} \| V_{1,n} \|_{L^{2}(I)}
\Big )  \leq C\sqrt{\delta},
\end{align*}
and therefore
\begin{align*}
|(u_{1,n}+\phi)_{s} |^{ p-1} &\geq |(u_{0,n}+\phi)_{s} |^{ p-1} - \Big | |(u_{1,n}+\phi)_{s} |^{ p-1} -|(u_{0,n}+\phi)_{s} |^{ p-1}\Big |\\
& \geq \frac{1}{2}|\phi_{s}|^{p-1} - C \sqrt{\delta}  \geq \frac{1}{4} |\phi_{s}|^{p-1}
\end{align*}
provided $\sqrt{\delta} \leq \frac{1}{4C}|\phi_{s}|^{p-1}$. Hence, collecting all estimates, we have obtained 
\begin{align*}
&|(u_{1,n}+\phi)_{s} |^{ p-1} \geq\frac{1}{2} (\frac{1}{2}|\phi_{s}|^{p-1})\quad\text{almost everywhere on } I,\\
&|(u_{1,n}+\phi)_{s} |^{ p-2} \geq |\phi_{s}|^{p-2}(\frac{1}{4})^{\frac{p-2}{p-1}} 
= c_{0}  (\frac{1}{2})^{\frac{p-2}{p-1}} \geq \frac{c_{0}}{2}\quad\text{almost everywhere on } I,\\
&\| V_{0,n} \|_{L^{2}(I)}, \| V_{1,n} \|_{L^{2}(I)} \leq1 .
\end{align*}
\end{proof}
 
\subsubsection{The case $p>2$: Control of the Lagrange multipliers for small initial data}

With the help of \eqref{est-lambda-forte} and Lemma~\ref{lem:controlVelpBig} we can now infer new information about the Lagrange multipliers (recall Definiton \ref{defBiglambdas}) when $p>2$.
\begin{lem} \label{lem:lambda2plus}
Assume $p\geq2$.
Let the assumptions and notation of Theorem \ref{thm:3.2} and Lemma~\ref{lem:controlVelpBig} hold. 
Then  for $\Lambda^{j}_{n}$, $j=1,2$, we have
\begin{align}
&|(\Lambda_{n}^{j})_{t}(t)| \leq C \| V_{n}( \cdot, t)\|_{W^{1,2}(I)} \qquad \text{for a.e.} t \in [0,T], \\
& |\Lambda_{n}^{j}(t)| \leq C \qquad \text{for all } t \in [0,T],
\end{align}
where $C=C(p,c_{*},L, u_{0})$.\\
Furthermore we  have that $\Lambda_{n}^{j} \in W^{1,2}(0,T)$ with $\| \Lambda_{n}^{j}\|_{W^{1,2}(0,T)} \leq C$ for all $n\geq n_{0}$ and $\Lambda_{n}^{j}$ converges unifomly  (and weakly in $W^{1,2}(0,T)$) to a continous map $\Lambda_{j} \in H^{1}(0,T)$ for $j=1,2$.
Moreover  we have that $\Lambda_{j}=\lambda_{j}$ for $j=1,2$ on $(0, T)$.
\end{lem}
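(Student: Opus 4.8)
The plan is to follow the structure of the proof of Lemma~\ref{lem:Lambda} almost verbatim, substituting the velocity control of Lemma~\ref{lem:controlVelpBig} for that of Lemma~\ref{controlVel} and invoking the sharp Lagrange-multiplier increment estimate \eqref{est-lambda-forte} in place of the linear argument available when $p=2$. Throughout, the decisive input is the uniform bound $\|V_{i,n}\|_{L^2(I)} \le 1$ for all $i=0,\dots,n$ supplied by Lemma~\ref{lem:controlVelpBig}, which is what allows the nonlinear structure of \eqref{est-lambda-forte} to be tamed.

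First I would establish the two pointwise bounds. For $t \in ((i-1)\tau_n, i\tau_n)$ one has $V_n(\cdot,t)=V_{i,n}$ and, by Definition~\ref{defBiglambdas}, $(\Lambda_n^j)_t(t) = \tau_n^{-1}(\tilde{\lambda}_j(u_{i,n}) - \tilde{\lambda}_j(u_{i-1,n}))$. Applying \eqref{est-lambda-forte} (whose additional hypothesis is guaranteed by the third estimate of Lemma~\ref{lemma3bis}) and dividing by $\tau_n$ gives
\begin{align*}
|(\Lambda_n^j)_t(t)| \le C\|(V_{i,n})_s\|_{L^2(I)}\left(1 + \|V_{i,n}\|_{L^2(I)} + \|V_{i-1,n}\|_{L^2(I)}\right) + C\|V_{i,n}\|_{L^2(I)}.
\end{align*}
Since $\|V_{i,n}\|_{L^2(I)}, \|V_{i-1,n}\|_{L^2(I)} \le 1$ by Lemma~\ref{lem:controlVelpBig} (recall $V_{0,n}$ satisfies $\|V_{0,n}\|_{L^2(I)}^2 \le \delta \le 1/2$ by~\eqref{SC}), the bracketed factor is at most $3$ and the claimed estimate $|(\Lambda_n^j)_t(t)| \le C\|V_n(\cdot,t)\|_{W^{1,2}(I)}$ follows. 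The uniform bound $|\Lambda_n^j(t)| \le C$ is then immediate from \eqref{lemconti} in Theorem~\ref{thm:3.2}, exactly as in Lemma~\ref{lem:Lambdazero}.

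Next I would promote these to a uniform $W^{1,2}(0,T)$ bound. Squaring and integrating the pointwise estimates yields
\begin{align*}
\|\Lambda_n^j\|_{W^{1,2}(0,T)}^2 \le C^2 T + C \int_0^T \left( \|V_n(\cdot,t)\|_{L^2(I)}^2 + \|(V_n(\cdot,t))_s\|_{L^2(I)}^2 \right) dt,
\end{align*}
and the right-hand side is bounded uniformly in $n \ge n_0$ because the space-time $L^2$-norm of $V_n$ is controlled by \eqref{eq:3.7} while the space-time $L^2$-norm of $(V_n)_s$ is controlled by the summed estimate in Lemma~\ref{lem:controlVelpBig}. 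Since in one dimension $W^{1,2}(0,T)$ embeds compactly into $C([0,T])$, a subsequence of $\Lambda_n^j$ converges uniformly, and weakly in $W^{1,2}(0,T)$, to a limit $\Lambda_j \in H^1(0,T)$.

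Finally I would identify $\Lambda_j = \lambda_j$ by the same three-term splitting as in Lemma~\ref{lem:Lambda}: for $\varphi \in L^2(0,T)$ write $\lambda_j - \Lambda_j = (\lambda_j - \tilde{\lambda}_n^j) + (\tilde{\lambda}_n^j - \Lambda_n^j) + (\Lambda_n^j - \Lambda_j)$. The first term vanishes in the limit by the weak convergence $\tilde{\lambda}_n^j \rightharpoonup \lambda_j$ of Lemma~\ref{lem:3.3}, the third by the weak convergence just established, and the middle one is estimated by
\begin{align*}
\left| \int_0^T (\tilde{\lambda}_n^j - \Lambda_n^j)\varphi\, dt \right| \le \tau_n \int_0^T |(\Lambda_n^j)_t(t)||\varphi(t)|\, dt \le \tau_n \|(\Lambda_n^j)_t\|_{L^2(0,T)} \|\varphi\|_{L^2(0,T)} \to 0,
\end{align*}
using the uniform $W^{1,2}$ bound and $\tau_n \to 0$. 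The main obstacle is entirely concentrated in the first step: unlike the $p=2$ case, \eqref{est-lambda-forte} carries the nonlinear amplification factor $(1 + \|V_{i,n}\|_{L^2(I)} + \|V_{i-1,n}\|_{L^2(I)})$, and it is only the smallness assumption~\eqref{SC} feeding into Lemma~\ref{lem:controlVelpBig} that yields the uniform velocity bound needed to neutralise it; without it the time derivative of $\Lambda_n^j$ could not be controlled by $\|V_n(\cdot,t)\|_{W^{1,2}(I)}$ alone.
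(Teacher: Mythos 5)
Your proposal is correct and follows essentially the same route as the paper: the pointwise bound on $(\Lambda_n^j)_t$ comes from \eqref{est-lambda-forte} tamed by the uniform velocity bound of Lemma~\ref{lem:controlVelpBig}, and the remaining claims (uniform $W^{1,2}(0,T)$ bound, uniform and weak convergence, and the identification $\Lambda_j=\lambda_j$ via the three-term splitting) are carried over verbatim from Lemma~\ref{lem:Lambda} with Lemma~\ref{lem:controlVelpBig} replacing Lemma~\ref{controlVel}, exactly as the paper does.
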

\begin{proof}
Following the definition of $\Lambda_{n}^{j}$ and using \eqref{est-lambda-forte} and Lemma~\ref{lem:controlVelpBig} it follows that 
\begin{align*}
|(\Lambda_{n}^{j})_{t}(t)|= \left|   \frac{\tilde{\lm}_{j}(u_{i,n})-\tilde{\lm}_{j}(u_{i-1,n})}{\tau_{n}}  \right| \leq C\| V_{n}( \cdot, t)\|_{W^{1,2}(I)} \qquad \text{ for } t \in ((i-1)\tau_{n}, i \tau_{n}).
\end{align*}
The other claims follow exactly as in Lemma~\ref{lem:Lambda} now using Lemma~\ref{lem:controlVelpBig} instead of Lemma~\ref{controlVel}.
\end{proof}

 
\subsection{Convergence procedure}
\label{sec:3.2}
\subsubsection{Convergence to a weak solution of \eqref{eq:P} ($1<p<\infty$)}

\begin{lem} \label{lem:4.1}
Let $1< p < \infty$. 
Let $u_{0} \in W^{1,p}_{\rm per}(I)$, $c_{*}$, and $T=T(p,u_{0},L)$ be as in Theorem~\ref{thm:3.2}.
Let $u_{n}$ be the piecewise linear interpolation of $\{u_{i,n}\}$ given in Definition~\ref{definition:2.4}. 
Then there exists a map 
\begin{align*}
u \in L^{\infty}(0,T; W^{1,p}_{\rm per}(I)) \cap H^{1}(0,T; L^{2}(I)) 
\end{align*}
such that  
\begin{align} \label{eq:4.1}
\int^{T}_{0} \int_{I} | \pd_t u(s,t) |^{2} \, dx dt < \frac{4}{p} c_{*},
\end{align}

\begin{align}\label{eq:4.1bis}
\sup_{(0,T)} \| u \|_{W^{1,p}(I)} \leq C=C(c_{*}, \| \phi \|_{W^{1,p}},p, \| u_{0}\|_{L^{2}(I)}),
\end{align}

and (for a subsequence which we still denote by $u_{n}$)
\begin{align} \label{eq:4.2}
\begin{cases}
&u_{n} \rightharpoonup u \quad \text{weakly\,$\ast$ in} \quad L^{\infty}(0,T; W^{1,p}(I)), \\
&u_{n} \rightharpoonup u \quad \text{weakly in} \quad  H^{1}(0,T; L^{2}(I)), 
\end{cases}
\quad \text{as} \quad n \to \infty. 
\end{align}
Moreover, 
for $\alpha = \min \{ \frac{1}{4}, \frac{p-1}{2p}\}$  we have that 
\begin{align} \label{eq:4.3}
u_{n} \to u \quad \text{in} \quad  C^{0,\va}([0,T] \times I). 
\end{align}
In particular, $u(\cdot,t) \to u_{0}(\cdot)$ in $C^{0}$ as $t \downarrow 0$. 
\end{lem}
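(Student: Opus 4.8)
The plan is to pass to the limit $n\to\infty$ in the piecewise linear interpolants $u_n$ of Definition~\ref{definition:2.4}, extracting every stated property from the uniform a priori bounds of Theorem~\ref{thm:3.2}. First I would record that for each fixed $t$ the map $u_n(\cdot,t)$ is a convex combination of $u_{i-1,n}$ and $u_{i,n}$, so by convexity of $\|\cdot\|_{L^p}$ the bound \eqref{eq:3.8} is inherited by $(u_n+\phi)_s(\cdot,t)$; moreover $\partial_t u_n=V_n$, so \eqref{eq:3.7} bounds $\partial_t u_n$ in $L^2(I\times(0,T))$ and \eqref{eq:3.8L2} bounds $u_n(\cdot,t)$ in $L^2(I)$ uniformly in $t$. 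Together with Lemma~\ref{Hilfsatz1} this gives a uniform bound for $u_n$ in $L^\infty(0,T;W^{1,p}_{\rm per}(I))\cap H^1(0,T;L^2(I))$. Weak-$\ast$ compactness in $L^\infty(0,T;W^{1,p})$ (reflexivity of $W^{1,p}$ for $1<p<\infty$) and weak compactness in the Hilbert space $H^1(0,T;L^2)$ then produce, along a subsequence, a limit $u$ in the stated space together with the convergences \eqref{eq:4.2}; weak lower semicontinuity of the norms against \eqref{eq:3.7} and \eqref{eq:3.8}--\eqref{eq:3.8L2} yields \eqref{eq:4.1} and \eqref{eq:4.1bis}.

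For the Hölder statement \eqref{eq:4.3} I would establish two one-sided moduli of continuity, uniform in $n$. In space, the uniform $L^p$-bound on $(u_n)_s(\cdot,t)$ and the Morrey embedding $W^{1,p}(I)\hookrightarrow C^{0,1-1/p}(I)$ give
\[ |u_n(s,t)-u_n(s',t)| \le C\,|s-s'|^{1-\frac1p}\qquad\text{for all } t. \]
In time, writing $\partial_t u_n=V_n$ and using Cauchy--Schwarz in $\tau$ together with \eqref{eq:3.7},
\[ \|u_n(\cdot,t_2)-u_n(\cdot,t_1)\|_{L^2(I)} \le |t_2-t_1|^{1/2}\Big(\int_{t_1}^{t_2}\int_I |V_n|^2\,ds\,d\tau\Big)^{1/2} \le C\,|t_2-t_1|^{1/2}. \]

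The crux is to convert this $L^2$-in-space time estimate into a pointwise one by interpolating against the spatial Hölder bound. Given $s$ and an interval $J\ni s$ of length $\ell$, I would write $u_n(s,t_2)-u_n(s,t_1)$ as its average over $J$ plus the two spatial oscillations, obtaining
\[ |u_n(s,t_2)-u_n(s,t_1)| \le C\,\ell^{-1/2}|t_2-t_1|^{1/2} + C\,\ell^{1-\frac1p}. \]
The choice $\ell=|t_2-t_1|^{1/2}$ balances the two terms into the exponents $\tfrac14$ and $\tfrac{p-1}{2p}$, so that with $\alpha=\min\{\tfrac14,\tfrac{p-1}{2p}\}$ and $|t_2-t_1|$ bounded,
\[ |u_n(s,t_2)-u_n(s,t_1)| \le C\,|t_2-t_1|^{\alpha}. \]
Since $1-\tfrac1p\ge\alpha$, the spatial bound is likewise controlled by $C\,|s-s'|^{\alpha}$, and the two one-sided estimates combine to a bound for $u_n$ in $C^{0,\alpha}([0,T]\times I)$, uniform in $n$ and in fact available at a strictly larger exponent. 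Arzelà--Ascoli then yields uniform convergence of a subsequence to a continuous limit, which must coincide with the weak limit $u$; interpolating the uniform bound at the larger exponent against this uniform convergence upgrades it to convergence in $C^{0,\alpha}$, proving \eqref{eq:4.3}. Finally, since $u_n(\cdot,0)=u_0$ for every $n$, the $C^0$-convergence forces $u(\cdot,t)\to u_0$ as $t\downarrow0$.

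I expect the main obstacle to be precisely this interpolation step: the control of $\partial_t u_n$ is only in $L^2(I\times(0,T))$, which by itself gives no pointwise-in-$s$ modulus of continuity in time, so the spatial Hölder regularity must be exploited to localize the $L^2$ time bound, and the exponent $\alpha=\min\{\tfrac14,\tfrac{p-1}{2p}\}$ emerges exactly from balancing these two competing scales. The remaining ingredients --- weak compactness, lower semicontinuity, Arzelà--Ascoli and the interpolation upgrade to Hölder convergence --- are standard.
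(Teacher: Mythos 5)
Your proposal is correct and its skeleton coincides with the paper's: the uniform bounds of Theorem~\ref{thm:3.2} are inherited by the interpolants, weak($\ast$) compactness gives \eqref{eq:4.2}, lower semicontinuity gives \eqref{eq:4.1}--\eqref{eq:4.1bis}, the time modulus $\| u_{n}(\cdot,t_{2})-u_{n}(\cdot,t_{1})\|_{L^{2}(I)}\le C|t_{2}-t_{1}|^{1/2}$ is obtained exactly as in the paper, and Arzel\`a--Ascoli closes the argument. The one genuinely different ingredient is the step that converts the $L^{2}$-in-space time modulus into a pointwise one. The paper invokes the interpolation inequality $\|\Gamma\|_{L^{\infty}}\le C\|\Gamma\|_{L^{q}}^{1/2}\|\Gamma\|_{W^{1,p}}^{1/2}$ with $1/p+1/q=1$ (citing Adams), splitting into the cases $q\le 2$ and $q>2$ (the latter needing the further Lebesgue interpolation $\|\Gamma\|_{L^{q}}\le\|\Gamma\|_{L^{2}}^{\theta}\|\Gamma\|_{L^{\infty}}^{1-\theta}$), which is what produces the exponents $1/4$ and $(p-1)/(2p)$. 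You instead use an elementary mean-plus-oscillation decomposition over a subinterval of length $\ell$ and optimize $\ell=|t_{2}-t_{1}|^{1/2}$; the two terms $\ell^{-1/2}|t_{2}-t_{1}|^{1/2}$ and $\ell^{1-1/p}$ reproduce precisely the same pair of exponents with no cited embedding theorem, so your route is more self-contained (a one-dimensional Campanato-type argument), while the paper's is shorter on the page. Two minor caveats: you should cap $\ell\le L$ (this only changes constants since $|t_{2}-t_{1}|\le T$), and your parenthetical claim that the H\"older bound holds ``at a strictly larger exponent'' is not justified --- the time modulus is exactly $\alpha$, so Arzel\`a--Ascoli plus boundedness in $C^{0,\alpha}$ strictly yields convergence only in $C^{0,\alpha'}$ for $\alpha'<\alpha$; the paper's own proof has the same (harmless) imprecision, so this does not distinguish the two arguments.
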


\begin{proof}
First of all notice that by Theorem \ref{thm:3.2}   we have that $u_{n}(\cdot,t)  \in W^{1,p}_{\rm per}(I)$ and
\begin{align} \label{eq:4.6old}
\sup_{t \in (\,0, T\,)} (\| (u_{n}(\cdot,t))_{s} \|_{L^{p}(I)}+ \| u_{n}(\cdot,t) \|_{L^{2}(I)})
 \le  C(c_{*}, \| \phi \|_{W^{1,p}(I)},p, \| u_{0}\|_{L^{2}(I)}).
\end{align} 
Lemma~\ref{Hilfsatz1} yields 
\begin{align}\label{eq:4.6}
\sup_{t \in (\,0, T\,)} \| u_{n}(\cdot,t) \|_{W^{1,p}(I)}
 \le  C(c_{*}, \| \phi \|_{W^{1,p}},p, \| u_{0}\|_{L^{2}(I)}).
 \end{align} 
Thus there exists a map $u \in L^{\infty}(0,T; W^{1,p}(I))$ such that 
\begin{align*}
u_{n} \rightharpoonup u \quad \text{weakly$\ast$ in} \quad L^{\infty}(0,T; W^{1,p}(I)) \quad \text{as} \quad n \to \infty 
\end{align*}
up to a subsequence.

Next note that since $u_{n}(s, \cdot)$ is absolutely continuous in $[\,0, T\,]$, 
for any $t_{1}$, $t_{2} \in [\,0, T\,]$ with $t_{1}< t_{2}$, we infer from H\"older's inequality and Fubini's theorem that  
\begin{align*}
\| u_{n}(\cdot, t_{2}) - u_{n}(\cdot, t_{1}) \|_{L^{2}(I)} 
 &= \left( \int_{I} \av{\int^{t_{2}}_{t_{1}} \dfrac{\pd u_{n}}{\pd t}(s,\tau) \, d\tau}^{2}\, ds \right)^{\frac{1}{2}} \\
 &\le \left( \int^{t_{2}}_{t_{1}} \int_{I} \av{\dfrac{\pd u_{n}}{\pd t}(s,\tau)}^{2}\, ds d \tau \right)^{\frac{1}{2}} (t_{2}-t_{1})^{\frac{1}{2}}. 
\end{align*} 
According to Theorem \ref{thm:3.2}, we find 
\begin{align} \label{eq:4.4}
\int^{t_{2}}_{t_{1}} \int_{I} \av{\dfrac{\pd u_{n}}{\pd t}(s,\tau)}^{2}\, ds d \tau
 = \int^{t_{2}}_{t_{1}} \int_{I} \av{V_{n}(s,\tau)}^{2} \, ds d \tau 
 \le \frac{4}{p}c_{*}.  
\end{align}
Hence we obtain 
\begin{align} \label{eq:4.5}
\| u_{n}(\cdot, t_{2}) - u_{n}(\cdot, t_{1}) \|_{L^{2}(I)} \le \sqrt{\frac{4}{p}c_{*}} (t_{2}-t_{1})^{\frac{1}{2}}. 
\end{align}

We turn to the proof of \eqref{eq:4.3}. 
First of all observe that by \eqref{eq:4.6} and embedding theory we have that
\begin{align}\label{linftyb}
\sup_{ t \in [0,T]} \| u_{n} (\cdot, t) \|_{L^{\infty}(I)} \leq C, \qquad \text{ with } C=C(L,c_{*},p, \| \phi\|_{W^{1,p}(I)},\| u_{0}\|_{L^{2}(I)}).
\end{align}
Moreover for any $t \in [0,T]$ we have
\begin{align}\label{pip}
|u_{n}(s_{2},t) - u_{n}(s_{1},t)| \leq |\int_{s_{1}}^{s_{2}}   (u_{n})_{s}(s, t)ds| \leq C |s_{2}-s_{1}|^{1-\frac{1}{p}}
\end{align}
with $C=C(c_{*},p, \| \phi\|_{W^{1,p}(I)})$.

Fix $0 \le t_1 \le t_2 \le T$ arbitrarily and set $$\Gm(\cdot):= u_n(\cdot, t_2) - u_n(\cdot, t_1) \in W^{1,p}_{\rm per}(I).$$ 
By an interpolation inequality (see for instance \cite[Thm. 5.9]{Adams})
we find  
\begin{align*}
\| \Gm \|_{L^\infty} \le C \| \Gm \|_{L^q}^{1/2} \|  \Gm \|_{W^{1,p}}^{1/2}, \qquad \text{ where } \frac{1}{p} + \frac{1}{q}=1. 
\end{align*}
Using \eqref{eq:4.6} yields
\begin{align*}
\| \Gm \|_{L^\infty} \le C \| \Gm \|_{L^q}^{1/2}, \qquad \text{ where } \frac{1}{p} + \frac{1}{q}=1 .
\end{align*}
Now  if $ 1 \leq q \leq 2$ then $\| \Gm \|_{L^{q}} \leq C \| \Gm \|_{L^{2}}$ for some $C=C(L)$ and by \eqref{eq:4.5} we infer
$$\| \Gm \|_{L^\infty} \le C | t_2 - t_1 |^{1/4} $$
with $C=C(L,p,c_{*}, \| \phi\|_{W^{1,p}(I)}, \|u_{0}\|_{L^{2}})$.
On the other hand if $ 2<q< \infty$ (i.e. $1<p<2$) another interpolation inequality (see for instance \cite[Appendix B2]{Evans}) gives
$$ \| \Gm \|_{L^{q}} \leq  \| \Gm \|_{L^{2}}^{\theta} \| \Gm \|_{L^{\infty}}^{1-\theta} $$
for $\frac{1}{q}=\frac{\theta}{2}$ (note that $\Gm \in L^{\infty}$,  
recall \eqref{linftyb}).
Then we obtain
 $$\| \Gm \|_{L^\infty} \le C \| \Gm \|_{L^q}^{1/2} \leq C\| \Gm \|_{L^{2}}^{\theta/2} \| \Gm \|_{L^{\infty}}^{\frac{1-\theta}{2}} \leq C\| \Gm \|_{L^{2}}^{\theta/2}= C\| \Gm \|_{L^{2}}^{\frac{p-1}{p}} .$$ 
Using \eqref{eq:4.5} we find 
\begin{align} \label{eq:4.9}
\| \Gm \|_{L^\infty} \le C | t_2 - t_1 |^{\frac{p-1}{2p} }.
\end{align}

Hence, it follows  that 
\begin{align*} 
\| u_{n}(t_{2}) - u_{n}(t_{1}) \|_{C^{0}} \le C | t_{2} - t_{1} |^\alpha \qquad \text{ with } \alpha =1/4 \text{ if }  2 \leq p < \infty, \text{ and } \alpha = \frac{p-1}{2p} \text{ otherwise}. 
\end{align*}
Since $\frac{p-1}{2p} \geq \frac{1}{4}$ if and only if $p \geq 2$ then we can state that
\begin{align} \label{eq:4.12}
\| u_{n}(t_{2}) - u_{n}(t_{1}) \|_{C^{0}} \le C | t_{2} - t_{1} |^\alpha \qquad \text{ with } \alpha = \min \{ \frac{1}{4}, \frac{p-1}{2p}\}. 
\end{align}
Next, since $\alpha \leq \frac{p-1}{2p} \leq \frac{p-1}{p}$ we infer from the above inequality and \eqref{pip} that
\begin{align}
|u_{n}( s_{2}, t_{2}) - u_{n}(s_{1}, t_{1})| \leq C (|t_{2}-t_{1}|^{\alpha} + |s_{2}-s_{1}|^{\alpha})
\end{align}
for any $(t_{i}, s_{i}) \in [0,T] \times [0,L]$, $i=1,2$.
Application of the Arzel\`a-Ascoli theorem yields \eqref{eq:4.3}. In particular $u ( \cdot, t) \in W^{1,p}_{\rm per}(I)$ for all times.
Moreover, setting $t_{1}=0$ in \eqref{eq:4.12}, we observe that 
\begin{align*}
\| u(t) - u_{0} \|_{C^{0}} \to 0 \quad \text{as} \quad t \downarrow 0. 
\end{align*}

Finally, from \eqref{eq:4.4}  we also infer that  there exists $V \in L^{2}(0,T; L^{2}(I)) $ such that
\begin{align}\label{VH}
V_{n} = \pd_{t} u_{n} \rightharpoonup V  \quad \text{in} \quad L^{2}(0,T; L^{2}(I)). 
\end{align}
Moreover we see that for any $v \in C_{0}^{\infty}((0,T) \times I)$ 
\begin{align*}
\int_{0}^{T} \int_{I} u_{n} v_{t}\,  ds dt = -\int_{0}^{T}\int_{I}  V_{n}  v  \, dsdt \to  -\int_{0}^{T} \int_{I} V v \, dsdt
\end{align*}
and using the fact  that $u_{n} \to u$ uniformly continuous
\begin{align*}
\int_{0}^{T}\int_{I} u_{n} v_{t} \, ds dt \to \int_{0}^{T} \int_{I} u v_{t} \, ds dt
\end{align*}
from which we infer that $u$ admits weak derivative $u_{t}=V$.
Hence we have $u \in H^{1}(0,T; L^{2}(I))$ and \eqref{eq:4.2}. 
Moreover, it follows from \eqref{eq:4.4} that \eqref{eq:4.1} holds. 
\end{proof}


\begin{lem} \label{lem:4.2}
Let $\tilde{u}_{n}$, $\tilde{U}_{n}$ be the piecewise constant interpolations of $\{ u_{i,n} \}$ as given in Definition~\ref{definition:2.5} and let the assumptions of Lemma~\ref{lem:4.1} hold. 
Then we have 
\begin{align} \label{eq:4.13}
\tilde{u}_{n} \to u  \quad \text{ and } \quad  \tilde{U}_{n} \to u \text{ in } C^{0}([0,T] \times I). 
\end{align} 
where $u$ denotes the map obtained in Lemma \ref{lem:4.1}.   
Moreover, it holds that 
\begin{align} \label{eq:4.14}
 (\tilde{u}_{n})_{s} \rightharpoonup  u_{s} \quad  \text{ and } \quad  (\tilde{U}_{n})_{s} \rightharpoonup  u_{s} \quad  \text{weakly in} \quad L^{p}(0,T; L^{p}(I)) \quad \text{as} \quad n \to \infty. 
\end{align}
\end{lem}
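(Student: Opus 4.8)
The plan is to bootstrap everything from the piecewise linear interpolant $u_n$, which by Lemma~\ref{lem:4.1} already converges to $u$ in $C^{0,\alpha}([0,T]\times I)$ and in particular satisfies the uniform-in-$n$ temporal H\"older bound \eqref{eq:4.12}. The starting observation is that the two piecewise constant interpolants are nothing but $u_n$ sampled at the nodes: for $t\in((i-1)\tau_n,i\tau_n]$ one has $u_n(s,i\tau_n)=u_{i-1,n}(s)+\tau_n V_{i,n}(s)=u_{i,n}(s)=\tilde u_n(s,t)$ and $u_n(s,(i-1)\tau_n)=u_{i-1,n}(s)=\tilde U_n(s,t)$.

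First I would establish \eqref{eq:4.13}. Using the identities above together with \eqref{eq:4.12}, for every $(s,t)\in I\times((i-1)\tau_n,i\tau_n]$ we get
\[
|\tilde u_n(s,t)-u_n(s,t)|=|u_n(s,i\tau_n)-u_n(s,t)|\le C|i\tau_n-t|^{\alpha}\le C\tau_n^{\alpha},
\]
and analogously $|\tilde U_n(s,t)-u_n(s,t)|\le C\tau_n^{\alpha}$, with $C$ independent of $n$. Taking the supremum over $I\times[0,T]$ yields $\|\tilde u_n-u_n\|_{C^0}+\|\tilde U_n-u_n\|_{C^0}\le C\tau_n^{\alpha}$; since $\tau_n=T/n\to0$ and $u_n\to u$ in $C^0$ by \eqref{eq:4.3}, the triangle inequality gives \eqref{eq:4.13}.

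For \eqref{eq:4.14} I would first record the uniform bound. By \eqref{eq:3.8} in Theorem~\ref{thm:3.2} the quantities $\|(u_{i,n})_s\|_{L^p(I)}$ are bounded uniformly in $i$ and $n$, so
\[
\int_0^T\!\!\int_I |(\tilde u_n)_s|^{p}\,ds\,dt=\sum_{i=1}^n\tau_n\int_I|(u_{i,n})_s|^{p}\,ds\le CT,
\]
and likewise for $\tilde U_n$. Hence $(\tilde u_n)_s$ and $(\tilde U_n)_s$ are bounded in the reflexive space $L^p((0,T)\times I)$, so along a subsequence $(\tilde u_n)_s\rightharpoonup w$ weakly. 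To identify $w$, I would test against $\varphi\in C_0^\infty((0,T)\times I)$ and integrate by parts in $s$ (no boundary terms, as $\varphi$ is compactly supported in the interior),
\[
\int_0^T\!\!\int_I (\tilde u_n)_s\,\varphi\,ds\,dt=-\int_0^T\!\!\int_I \tilde u_n\,\varphi_s\,ds\,dt\longrightarrow -\int_0^T\!\!\int_I u\,\varphi_s\,ds\,dt=\int_0^T\!\!\int_I u_s\,\varphi\,ds\,dt,
\]
where the passage to the limit uses the uniform convergence $\tilde u_n\to u$ from \eqref{eq:4.13} and the last equality uses $u(\cdot,t)\in W^{1,p}_{\rm per}(I)$ from Lemma~\ref{lem:4.1}. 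Thus $w=u_s$; since every weakly convergent subsequence has this same limit, the full sequence converges weakly, and the identical argument applies to $\tilde U_n$.

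The only delicate point is the limit identification in the last display: one must check that the uniform (hence $L^1$) convergence of $\tilde u_n$ suffices to pass to the limit in $\int \tilde u_n\,\varphi_s$, and that testing against interior test functions alone pins down the weak $L^p$-limit. Both are standard facts (density of $C_0^\infty$ in $L^{p'}$ and uniqueness of weak limits), so no real obstruction arises; the whole lemma is essentially a corollary of the uniform H\"older bound \eqref{eq:4.12} and the energy bound \eqref{eq:3.8}.
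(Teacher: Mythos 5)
Your proposal is correct and follows essentially the same route as the paper: both prove \eqref{eq:4.13} by observing that $\tilde u_n$, $\tilde U_n$ are $u_n$ evaluated at the nodes and invoking the uniform temporal H\"older bound \eqref{eq:4.12} plus the triangle inequality, and both prove \eqref{eq:4.14} via the uniform $L^p$ bound from \eqref{eq:3.8}, reflexivity, and identification of the weak limit by integrating by parts against compactly supported (in $s$) test functions using the uniform convergence $\tilde u_n\to u$. The only cosmetic difference is your choice of test class $C_0^\infty((0,T)\times I)$ versus the paper's $L^\infty(0,T;C_0^\infty(I))$, which changes nothing.
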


\begin{proof}
We show the proof only for $\tilde{u}_{n}$, since analogous arguments holds for $\tilde{U}_{n}$.
Recalling \eqref{eq:4.6old} and \eqref{eq:4.6}, we  see that $\tilde{u}_{n} \in L^{\infty}(0,T; W^{1,p}_{\rm per}(I))$. 
In particular 
equations \eqref{linftyb}, \eqref{pip} now hold with $u_{n}$ replaced by $\tilde{u}_{n}$. 
 
Fix $t \in (\,0, T\,]$ arbitrarily. 
Then there exists a family of intervals $\{ (\,(i_{n}-1) \tau_{n}, i_{n} \tau_{n} \,] \}_{n \in \N}$ such that 
$t \in (\,(i_{n}-1) \tau_{n}, i_{n} \tau_{n} \,]$. 
We deduce from \eqref{eq:4.12} that 
\begin{align*}
\| \tilde{u}_{n}(t) - u_{n}(t) \|_{C^{0}(I)} 
& = \| u_{i_{n},n} - u_{n}(t) \|_{C^{0}(I)} 
 = \| u_{n}(i_{n} \tau_{n}) - u_{n}(t) \|_{C^{0}(I)} \\
& \le C |i_{n} \tau_{n} - t|^{\alpha} 
 \le C \tau_{n}^{\va} \to 0 \quad \text{as} \quad n \to \infty  
\end{align*} 
Since $u_{n} \to u$ in $C^{0}([0,T] \times I)$ by  Lemma~\ref{lem:4.1} and $t$ was arbitrarily chosen, we infer that $\tilde{u}_{n} \to u$ in $C^{0}([0,T] \times I)$. 

We turn to \eqref{eq:4.14}. 
Again recalling \eqref{eq:4.6}, we also see that $(\tilde{u}_{n})_{s} \in L^{p}(0,T; L^{p}(I))$ and $\| (\tilde{u}_{n})_{s}\|_{L^{p}(0,T; L^{p}(I))} \leq C$ for all $n \in \mathbb{N}$. Now $L^{p}(0,T; L^{p}(I))$ is a reflexive Banach space therefore there exists $ v \in L^{p}(0,T; L^{p}(I))$ such that $ (\tilde{u}_{n})_{s} \rightharpoonup  v$.
This implies that 
$$ \int^{T}_{0} \int_{I} (\tilde{u}_{n})_{s} \cdot \vp  \, dsdt \to  \int^{T}_{0} \int_{I}  v \cdot \vp \, dsdt $$
for any $\vp \in L^{q}(0,T; L^{q}(I))$, with $1/p + 1/q=1$.
On the other hand if $\vp \in L^{\infty}(0,T; C_{0}^{\infty}(I))$ we infer that
$$  \int^{T}_{0} \int_{I}  (\tilde{u}_{n})_{s} \vp \,dsdt = - \int^{T}_{0} \int_{I}  \tilde{u}_{n} \cdot \vp_{s} \, dsdt \to 
 - \int^{T}_{0} \int_{I}  u \cdot \vp_{s} \, dsdt $$
where we have used that $\tilde{u}_{n} \to u$. Hence we derive that $v=u_{s}$ and the second claim follows.
\end{proof}

\begin{lem} \label{lem:4.4}
Let $1<p<\infty$.
Let $\tilde{u}_{n}$ be the piecewise constant interpolation of $\{ u_{i,n} \}$ as given in Definition~\ref{definition:2.5} and let the assumptions of Lemma~\ref{lem:4.1} hold. 
Then,  
it holds that 
\begin{align*}
\int^{T}_{0} \int_{I} | (\tilde{u}_{n} + \phi)_{s}|^{p-2}  (\tilde{u}_{n} + \phi)_{s}  \cdot  \vp_{s} \, ds dt 
 \to \int^{T}_{0} \int_{I} | (u+\phi)_{s} |^{p-2}  (u+\phi)_{s}  \cdot  \vp_{s} \, ds dt 
 \quad \text{as} \quad n \to \infty 
\end{align*}
for any $\vp \in L^{\infty}(0,T; W^{1,p}_{\rm per}(I))$, where $u$ denotes the map obtained in Lemma \ref{lem:4.1}. 

 Moreover we have that the map $| (u+\phi)_{s} |^{p-2}  (u+\phi)_{s} \in L^{\infty}(0,T; L^{\frac{p}{p-1}}(I))$ admits for almost every time a weak derivative (in the space variable)
\begin{align*}
(| (u+\phi)_{s} |^{p-2}  (u+\phi)_{s})_{s} \in L^{2}(0,T; L^{2}(I)), 
\end{align*}
and  $[| (u+\phi)_{s} |^{p-2}  (u+\phi)_{s}]_{0}^{L}=0$ for a. e. $t \in (0,T)$.
\end{lem}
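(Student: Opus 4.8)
The plan is to pass to the limit in the discrete Euler--Lagrange identity \eqref{regeq}, the only real difficulty being the nonlinearity $\xi \mapsto |\xi|^{p-2}\xi$. Write $w_n := |(\tilde u_n + \phi)_s|^{p-2}(\tilde u_n+\phi)_s$ and $W := |(u+\phi)_s|^{p-2}(u+\phi)_s$. First I would record two uniform bounds for $w_n$: from \eqref{eq:3.8} one has $\int_I |w_n|^{p/(p-1)}\,ds = \|(\tilde u_n+\phi)_s\|_{L^p(I)}^p \le C$, so $w_n$ is bounded in $L^\infty(0,T;L^{p/(p-1)}(I))$; and by Lemma~\ref{lemma3bis}, $\|w_n(\cdot,t)\|_{W^{1,2}(I)}^2 \le C(1+\|V_n(\cdot,t)\|_{L^2(I)}^2)$ pointwise in $t$, which integrated in time together with Theorem~\ref{thm:3.2} (namely \eqref{eq:3.7}) gives $\|w_n\|_{L^2(0,T;W^{1,2}(I))}\le C$. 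Hence, up to a subsequence, $w_n \rightharpoonup w$ weakly in $L^2(0,T;W^{1,2}(I))$ and weakly-$\ast$ in $L^\infty(0,T;L^{p/(p-1)}(I))$, so in particular $(w_n)_s \rightharpoonup w_s$ weakly in $L^2(0,T;L^2(I))$. On the other hand, \eqref{regeq} gives the weak derivative $(w_n)_s = V_n - \tilde\lambda_n^1\sin(\tilde u_n+\phi) + \tilde\lambda_n^2\cos(\tilde u_n+\phi)$, which converges weakly in $L^2(0,T;L^2(I))$ by \eqref{VH}, Lemma~\ref{lem:3.3} and the uniform convergence $\tilde u_n \to u$ of Lemma~\ref{lem:4.2}; this already shows $w_s\in L^2(0,T;L^2(I))$. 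Finally, since the trace $f\mapsto (f(0,\cdot),f(L,\cdot))$ is a bounded, hence weakly continuous, operator on $L^2(0,T;W^{1,2}(I))$, the periodicity $w_n(0,\cdot)=w_n(L,\cdot)$ (recall \eqref{periodicityproperty}) passes to the limit and yields $[w]_0^L = 0$ for a.e.\ $t$.

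The heart of the proof is to identify $w = W$ via a monotonicity (Minty-type) argument. Set $a_n := (\tilde u_n+\phi)_s$ and $b := (u+\phi)_s$ and consider
\[
M_n := \int_0^T\!\!\int_I (w_n - W)(a_n - b)\, ds\, dt \ge 0,
\]
the integrand being nonnegative by monotonicity of $\xi\mapsto |\xi|^{p-2}\xi$. I would show $M_n\to 0$ by treating its four terms. The two terms containing $W\in L^\infty(0,T;L^{p/(p-1)}(I))$ cancel in the limit, because $a_n\rightharpoonup b$ in $L^p(0,T;L^p(I))$ by Lemma~\ref{lem:4.2} and $W$ lies in the dual space. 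The term $\int\!\!\int w_n b$ converges by the weak convergence of $w_n$. For the genuinely nonlinear term $\int\!\!\int w_n a_n$, I would integrate by parts in space, using periodicity of $w_n$ and $\tilde u_n$, to write $\int_I w_n a_n = -\int_I (w_n)_s\,\tilde u_n + \tfrac{2\pi\eta}{L}\int_I w_n$; the first summand is a product of the weakly convergent factor $(w_n)_s$ and the strongly (uniformly) convergent factor $\tilde u_n$, and the second passes to the limit by weak convergence of $w_n$. Comparing limits, the difference between $\int\!\!\int w_n a_n$ and $\int\!\!\int w_n b$ reduces to $-\int_0^T\int_I (wu)_s\,ds\,dt = -\int_0^T [w u]_0^L\, dt$, which vanishes because $u$ is periodic and $[w]_0^L=0$. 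Hence all contributions cancel and $M_n\to 0$.

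Since $M_n\to 0$ with nonnegative integrand, the quantitative monotonicity estimates force $a_n\to b$ strongly in $L^p(0,T;L^p(I))$: for $p\ge 2$ this is immediate from Lemma~\ref{lemDB}, while for $1<p<2$ one combines the pointwise lower bound of Lemma~\ref{Lindquist} with Hölder's inequality and the uniform bound on $\|a_n\|_{L^p}$. Continuity of the Nemytskii map then yields $w_n\to W$ strongly in $L^{p/(p-1)}(0,T;L^{p/(p-1)}(I))$, which both identifies $w=W$ and, by Hölder's inequality against $\varphi_s\in L^\infty(0,T;L^p(I))$, gives the claimed convergence of the $p$-Laplacian term for every $\varphi\in L^\infty(0,T;W^{1,p}_{\rm per}(I))$. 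The remaining assertions of the second part are then exactly the properties of $w=W$ collected in the first paragraph: $W\in L^\infty(0,T;L^{p/(p-1)}(I))$, $W_s\in L^2(0,T;L^2(I))$, and $[W]_0^L=0$ for a.e.\ $t$.

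The main obstacle is precisely the passage to the limit in $\int\!\!\int w_n a_n$, a product of two merely weakly convergent sequences; the mechanism that makes it work is the spatial $W^{1,2}$-regularity encoded in Lemma~\ref{lemma3bis}, which upgrades $(w_n)_s$ to a weak $L^2$ limit and enables the integration by parts against the uniformly convergent $\tilde u_n$. A secondary delicate point is the careful treatment of the boundary term, where the periodicity $[w]_0^L=0$ — itself obtained from weak continuity of the trace — is indispensable.
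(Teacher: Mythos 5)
Your proposal is correct in substance but identifies the weak limit of the nonlinear term by a different mechanism than the paper. The paper never forms the Minty pairing $M_n$: instead it exploits the convexity of $y\mapsto \frac1p|y|^p$ to write $F(\psi)-F(\tilde u_n)\ge \int_0^T\!\!\int_I w_n\cdot(\psi-\tilde u_n)_s\,ds\,dt$, passes to the limit using weak lower semicontinuity of $F$, and then recovers $\int\!\!\int w\cdot\varphi_s=\int\!\!\int |(u+\phi)_s|^{p-2}(u+\phi)_s\cdot\varphi_s$ by the two-sided variation $\psi=u\pm\ve\varphi$ and the Gateaux differentiability of $F$. The decisive technical step is, however, identical in both arguments: the integration by parts of $\int_I w_n(\tilde u_n)_s\,ds$ (resp.\ $\int\!\!\int w_n(\psi-\tilde u_n)_s$), which trades the product of two weakly convergent sequences for the product of the weakly convergent $(w_n)_s$ (controlled in $L^2(0,T;L^2)$ via Lemma~\ref{lemma3bis} and \eqref{eq:3.7}) with the uniformly convergent $\tilde u_n$, using \eqref{periodicityproperty} to kill the boundary terms. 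What your route buys is slightly more: strong convergence of $(\tilde u_n+\phi)_s$ in $L^p$ and the pointwise identification $w=W$ a.e., whereas the paper only identifies the two as functionals against $\varphi_s$; what the paper's route buys is that it avoids the quantitative monotonicity inequalities altogether and works uniformly in $1<p<\infty$ with no case distinction.

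One citation needs repair: for $1<p<2$ you invoke the lower bound of Lemma~\ref{Lindquist}, but that lemma (like Lemma~\ref{lemDB}) is stated only for $p\ge 2$, and its first inequality is false as written for $p<2$. The inequality you actually need, $\langle |a|^{p-2}a-|b|^{p-2}b,\,a-b\rangle \ge C\,|a-b|^2(|a|+|b|)^{p-2}$, is standard but is not available anywhere in the paper, so you would have to supply it (or its reference) yourself. Alternatively you could drop the strong-convergence upgrade entirely and close the identification with the classical Minty argument, testing monotonicity against arbitrary $v$ rather than against $b$, which requires no quantitative inequality and hence no case distinction in $p$.
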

\begin{proof}
According to Lemma \ref{lemma3bis} and Theorem~\ref{thm:3.2} we see that 
\begin{align*}
\int^{T}_{0} \|  | (\tilde{u}_{n} + \phi)_{s}|^{p-2}  (\tilde{u}_{n} + \phi)_{s}  \|_{W^{1,2}(I)}^{2} \, dt 
 \le C \int^{T}_{0} ( 1 + \| V_{n} \|^{2}_{L^{2}} ) \, dt 
 \le C. 
\end{align*}
Thus we find $w \in L^{2}(0,T; W^{1,2}(I))$ such that 
\begin{align} \label{eq:4.18}
 | (\tilde{u}_{n} + \phi)_{s}|^{p-2}  (\tilde{u}_{n} + \phi)_{s} \rightharpoonup w \quad \text{in} \quad 
L^{2}(0,T; W^{1,2}(I)) \quad \text{as} \quad n \to \infty 
\end{align}
 and
$\| w \|_{L^{2}(0,T; W^{1,2}(I))} \leq C=C(c_{*}, p,T)$.

This implies in particular that
\begin{align}\label{peppa}
\int_{0}^{T} \int_{I} | (\tilde{u}_{n} + \phi)_{s}|^{p-2}  (\tilde{u}_{n} + \phi)_{s} \cdot \vp \, ds dt \to \int_{0}^{T} \int_{I} w \cdot \vp \, ds dt  \quad \forall \vp \in L^{2}(0,T; L^{2}(I)).
\end{align}

On the other hand by Lemma \ref{lemma3bis} and Theorem~\ref{thm:3.2} we also have for $1/p+1/q=1$ that 
\begin{align*}
\int^{T}_{0} \|  | (\tilde{u}_{n} + \phi)_{s}|^{p-2}  (\tilde{u}_{n} + \phi)_{s}  \|_{L^{q}(I)}^{2} \, dt 
 \le C \int^{T}_{0} ( 1 + \| V_{n} \|^{2}_{L^{2}} ) \, dt 
 \le C.
\end{align*}
The space $L^{2}(0,T; L^{q}(I))$ is reflexive with dual space given by $(L^{2}(0,T; L^{q}(I)))^{*}= L^{2}(0,T; L^{p}(I))$. Hence there exists $\tilde{\xi} \in L^{2}(0,T; L^{q}(I))$ such that
\begin{align}\label{peppa2}
\int_{0}^{T} \int_{I} | (\tilde{u}_{n} + \phi)_{s}|^{p-2}  (\tilde{u}_{n} + \phi)_{s} \cdot \vp \, ds dt \to \int_{0}^{T} \int_{I} \tilde{\xi} \cdot \vp \, ds dt  \quad \forall \vp \in L^{2}(0,T; L^{p}(I)).
\end{align}
Together with \eqref{peppa} we infer that $w=\tilde{\xi}$ almost everywhere and $w \in L^{2}(0,T; W^{1,2}(I)) \cap L^{2}(0,T; L^{q}(I))= L^{2}(0,T; W^{1,2}(I))$.

Note also that the periodicity property
\begin{align}\label{periodicityproperty2}
0=[| (\tilde{u}_{n} + \phi)_{s}|^{p-2}  (\tilde{u}_{n} + \phi)_{s}]_{0}^{L} =| (\tilde{u}_{n} + \phi)_{s}|^{p-2}  (\tilde{u}_{n} + \phi)_{s}(L) -| (\tilde{u}_{n} + \phi)_{s}|^{p-2}  (\tilde{u}_{n} + \phi)_{s}(0)
\end{align}
that holds for all time (recall \eqref{periodicityproperty}) carries over to $w$ in the sense that for almost every time 
$w(L,t)-w(0,t)=0$. This follows from \eqref{peppa} testing with $\varphi(t,s)=\zeta(t)\psi_{s}(s)$ for $\psi \in W^{1,p}_{\rm per}(I)$, $\zeta \in C^{\infty}_{0}(0,T)$, integrating by parts and using \eqref{eq:4.18} and \eqref{peppa2} .

Next, set 
\begin{align*}
F(\psi) := \dfrac{1}{p} \int^{T}_{0} \int_{I} | ( \psi +\phi)_{s} |^{p} \, ds dt = \frac{1}{p}\| ( \psi +\phi)_{s} \|_{L^{p}(0,T; L^{p}(I))}^{p}.  
\end{align*}
Using the convexity of the $C^{1}$-map $y \to \frac{1}{p} |y|^{p}$, we observe that 
\begin{align} \label{eq:4.19}
F(\psi) - F(\tilde{u}_{n})  
 \ge \int^{T}_{0} \int_{I} |  (\tilde{u}_{n}+\phi)_{s}|^{p-2}  (\tilde{u}_{n}+\phi)_{s} \cdot (\psi - \tilde{u}_{n})_{s} \, dx dt 
 \quad \text{for any} \quad \psi \in L^{\infty}(0,T; W^{1,p}_{\rm per}(I)). 
\end{align}
Recalling \eqref{eq:4.14} and the fact that $F(\cdot)$ is weakly lower semicontinuous and letting $n \to \infty$ in~\eqref{eq:4.19}, we have   
\begin{align} \label{eq:4.20}
F(\psi) - F(u) \ge \int^{T}_{0} \int_{I} w \cdot (\psi - u)_{s} \, dx dt. 
\end{align}
Indeed, using \eqref{eq:4.18}, \eqref{eq:4.13}, \eqref{periodicityproperty2} we see that    
\begin{align*}
& \int^{T}_{0} \int_{I}  | (\tilde{u}_{n} + \phi)_{s}|^{p-2}  (\tilde{u}_{n} + \phi)_{s}\cdot (\psi-\tilde{u}_{n})_{s} \, ds dt \\
& \quad = -\int^{T}_{0} \int_{I} ( | (\tilde{u}_{n} + \phi)_{s}|^{p-2}  (\tilde{u}_{n} + \phi)_{s})_{s} \cdot (\psi-\tilde{u}_{n}) \, ds dt\\
& \quad \to -\int^{T}_{0} \int_{I}  w_{s} \cdot (\psi- u) \, ds dt
 = \int^{T}_{0} \int_{I} w \cdot (\psi- u)_{s} \, ds dt,   
\end{align*}
where in the last inequality we have used the fact that $w \in L^{2}(0,T; W^{1,2}(I)) \cap L^{2}(0,T; L^{q}(I))$ and that $[w]_{0}^{L}=0$ for almost every time.

Setting $\psi= u + \ve \vp$ in \eqref{eq:4.20}  for some $\vp \in L^{\infty}(0,T; W^{1,p}_{\rm per}(I))$ and $\ve >0$, we obtain 
\begin{align} \label{eq:4.21}
\dfrac{F(u + \ve \vp) - F(u)}{\ve} \ge \int^{T}_{0} \int_{I} w \cdot  \vp_{s} \, ds dt.
\end{align}
On the other hand, putting $\psi= u - \ve \vp$ in \eqref{eq:4.20}, we obtain 
\begin{align} \label{eq:4.22}
\dfrac{F(u) - F(u - \ve \vp)}{\ve} \le \int^{T}_{0} \int_{I} w \cdot  \vp_{s} \, ds dt.
\end{align}
Plugging \eqref{eq:4.22} into \eqref{eq:4.21} and letting $\ve \downarrow 0$ (the existence of the Gateaux derivative is standard, see for instance \cite[Thm. 3.11 Step2]{Dacorogna}), we find  
\begin{align}\label{peppa4}
\int^{T}_{0} \int_{I} | (u + \phi)_{s}|^{p-2}  (u + \phi)_{s} \cdot  \vp_{s} \, ds dt 
 = \int^{T}_{0} \int_{I} w \cdot  \vp_{s} \, ds dt \quad \forall \, \vp \in L^{\infty}(0,T; W^{1,p}_{\rm per}(I)). 
\end{align}
Together with \eqref{peppa2} we obtain the first claim.
Next, observe that \eqref{eq:4.18} implies  also that
\begin{align}
\int_{0}^{T} \int_{I} (| (\tilde{u}_{n} + \phi)_{s}|^{p-2}  (\tilde{u}_{n} + \phi)_{s})_{s} \cdot \vp \, ds dt \to \int_{0}^{T} \int_{I} w_{s} \cdot \vp \, ds dt  \quad \forall \vp \in L^{2}(0,T; L^{2}(I)).
\end{align}
In particular choosing $\vp \in L^{2}(0,T; C^{\infty}_{\rm per}(I))$ and using \eqref{periodicityproperty2} we get 
\begin{align}
\int_{0}^{T} \int_{I} (| (\tilde{u}_{n} + \phi)_{s}|^{p-2} & (\tilde{u}_{n} + \phi)_{s})_{s} \cdot \vp \, ds dt  \notag \\
&= - 
\int_{0}^{T} \int_{I} | (\tilde{u}_{n} + \phi)_{s}|^{p-2}  (\tilde{u}_{n} + \phi)_{s} \cdot \vp_{s} \, ds dt
 \to -\int_{0}^{T} \int_{I} w \cdot \vp_{s} \, ds dt.
\end{align}
Together with \eqref{peppa4} this gives 
\begin{align}
\int^{T}_{0} \int_{I} | (u + \phi)_{s}|^{p-2}  (u + \phi)_{s} \cdot  \vp_{s} \, ds dt =- \int_{0}^{T} \int_{I} w_{s} \cdot \vp \, ds dt \qquad \forall \vp \in L^{2}(0,T; C^{\infty}_{\rm per}(I)),
\end{align} 
so that we infer that for almost every time  $| (u + \phi)_{s}|^{p-2}  (u + \phi)_{s}$ admits weak derivative  $(| (u + \phi)_{s}|^{p-2}  (u + \phi)_{s})_{s}=w_{s}$ and $[| (u + \phi)_{s}|^{p-2}  (u + \phi)_{s}]_{0}^{L}=0$.
\end{proof}


\begin{lem} \label{lem:4.4bis}
Let $1<p<\infty$.
Let $\tilde{U}_{n}$ be the piecewise constant interpolation of $\{ u_{i,n} \}$ as given in Definition~\ref{definition:2.5} and let the assumptions of Lemma~\ref{lem:4.1} hold. 
Then for any $\delta \in (0,T)$ we have that 
\begin{align*}
\int^{T}_{\delta} \int_{I} | (\tilde{U}_{n} + \phi)_{s}|^{p-2}  (\tilde{U}_{n} + \phi)_{s}  \cdot  \vp_{s} \, ds dt 
 \to \int^{T}_{\delta} \int_{I} | (u+\phi)_{s} |^{p-2}  (u+\phi)_{s}  \cdot  \vp_{s} \, ds dt 
 \quad \text{as} \quad n \to \infty 
\end{align*}
for any $\vp \in L^{\infty}(0,T; W^{1,p}_{\rm per}(I))$, where $u$ denotes the map obtained in Lemma \ref{lem:4.1}. 
\end{lem}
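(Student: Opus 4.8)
The plan is to repeat almost verbatim the argument used for Lemma~\ref{lem:4.4}, with the forward interpolation $\tilde{u}_{n}$ replaced by the backward interpolation $\tilde{U}_{n}$ and the interval $[0,T]$ replaced by $[\delta,T]$. The only genuinely new point, and the reason for restricting to $[\delta,T]$, is the behaviour on the first time step: there $\tilde{U}_{n}=u_{0,n}=u_{0}$, and under the present hypotheses (only $u_{0}\in W^{1,p}_{\rm per}(I)$) the field $|(u_{0}+\phi)_{s}|^{p-2}(u_{0}+\phi)_{s}$ need not belong to $W^{1,2}(I)$, so the bound of Lemma~\ref{lemma3bis} is unavailable there. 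Fixing $\delta\in(0,T)$ and taking $n$ large enough that $\tau_{n}<\delta$, every subinterval $((i-1)\tau_{n},i\tau_{n}]$ meeting $[\delta,T]$ has $i\ge 2$; hence on $[\delta,T]$ the map $\tilde{U}_{n}$ takes only the values $u_{i-1,n}$ with $i-1\ge 1$, for which Lemma~\ref{lemma3bis} does apply.

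Set $\tilde{w}_{n}:=|(\tilde{U}_{n}+\phi)_{s}|^{p-2}(\tilde{U}_{n}+\phi)_{s}$. First I would establish the uniform bound
\[
\int_{\delta}^{T}\|\tilde{w}_{n}\|_{W^{1,2}(I)}^{2}\,dt=\sum_{i}\tau_{n}\,\big\| |(u_{i-1,n}+\phi)_{s}|^{p-2}(u_{i-1,n}+\phi)_{s}\big\|_{W^{1,2}(I)}^{2}\le C\Big(T+\sum_{i}\tau_{n}\|V_{i-1,n}\|_{L^{2}(I)}^{2}\Big)\le C,
\]
where the sums run over the indices $i\ge 2$ with $((i-1)\tau_{n},i\tau_{n}]\cap[\delta,T]\neq\emptyset$, the first inequality uses Lemma~\ref{lemma3bis} and the velocity sum is controlled by Theorem~\ref{thm:3.2}. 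As in Lemma~\ref{lem:4.4} one also gets a uniform $L^{2}(\delta,T;L^{q}(I))$ bound with $\tfrac1p+\tfrac1q=1$. Hence, up to a subsequence, $\tilde{w}_{n}\rightharpoonup\tilde{w}$ in $L^{2}(\delta,T;W^{1,2}(I))$, and the periodicity $[\tilde{w}_{n}]_{0}^{L}=0$, valid for every time by \eqref{periodicityproperty}, passes to the limit to give $[\tilde{w}]_{0}^{L}=0$ for a.e.\ $t$.

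It remains to identify $\tilde{w}=|(u+\phi)_{s}|^{p-2}(u+\phi)_{s}$, which I would do by the same convexity (Minty-type) argument as in Lemma~\ref{lem:4.4}. Writing $F_{\delta}(\psi):=\tfrac1p\int_{\delta}^{T}\!\int_{I}|(\psi+\phi)_{s}|^{p}\,ds\,dt$ and using convexity of $y\mapsto\tfrac1p|y|^{p}$, one has $F_{\delta}(\psi)-F_{\delta}(\tilde{U}_{n})\ge\int_{\delta}^{T}\!\int_{I}\tilde{w}_{n}\cdot(\psi-\tilde{U}_{n})_{s}$. The decisive inputs to pass to the limit are $\tilde{U}_{n}\to u$ in $C^{0}$ and $(\tilde{U}_{n})_{s}\rightharpoonup u_{s}$ weakly in $L^{p}$, both already furnished by \eqref{eq:4.13} and \eqref{eq:4.14} of Lemma~\ref{lem:4.2} (restricted to $[\delta,T]$); integrating by parts (using $[\tilde{w}_{n}]_{0}^{L}=0$), passing to the limit with $(\tilde{w}_{n})_{s}\rightharpoonup\tilde{w}_{s}$ and $\tilde{U}_{n}\to u$ strongly in $L^{2}$, and integrating back ($[\tilde{w}]_{0}^{L}=0$), one arrives at $F_{\delta}(\psi)-F_{\delta}(u)\ge\int_{\delta}^{T}\!\int_{I}\tilde{w}\cdot(\psi-u)_{s}$. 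Choosing $\psi=u\pm\varepsilon\varphi$ and letting $\varepsilon\downarrow0$ identifies $\tilde{w}=|(u+\phi)_{s}|^{p-2}(u+\phi)_{s}$; since the limit is thereby uniquely determined the whole sequence converges, and testing $\tilde{w}_{n}\rightharpoonup\tilde{w}$ against $\varphi_{s}$ yields the claim.

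The main obstacle is exactly the behaviour near $t=0$: the backward interpolation freezes the merely $W^{1,p}$-regular initial datum on the first time step, where the higher-order estimate of Lemma~\ref{lemma3bis} fails. This is what forces the restriction to $[\delta,T]$ and what distinguishes this statement from Lemma~\ref{lem:4.4}; everything else is a routine transcription of the previous proof.
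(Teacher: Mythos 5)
Your proposal is correct and follows exactly the route the paper takes: the published proof is just a remark that one repeats the argument of Lemma~\ref{lem:4.4} while avoiding the interval $[0,\tau_{n}]$, where $\tilde{U}_{n}=u_{0}$ lacks the regularity of Lemma~\ref{lemma3bis} and the periodicity \eqref{periodicityproperty}, noting that $\tau_{n}<\delta$ for $n$ large. You have simply spelled out in detail what the paper leaves implicit, with no deviation in substance.
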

\begin{proof}
The proof follows with similar arguments used in Lemma~\ref{lem:4.4}. The main difference is that now we want to ``avoid'' the time interval $[0, \tau_{n}]$ because here $\tilde{U}_{n}(s,t)=u_{0}(s)$ and for the initial data $u_{0}$ we do not have the regularity properties derived  in Lemma~\ref{lemma3bis} (nor does \eqref{periodicityproperty} holds). Note that for given $\delta>0$ we have that $\tau_{n} < \delta$ if $n$ is sufficiently large.
\end{proof}


\begin{thm}\label{thm:existence}
Let $1< p < \infty$.  
Let $u_{0} \in W^{1,p}_{\rm per}$, $c_{*}$, and $T=T(p,u_{0},L)$ be as in Theorem~\ref{thm:3.2}. Then 
there exists a map $u \in L^{\infty}(0,T; W^{1,p}_{\rm per}(I)) \cap H^{1}(0,T; L^{2}(I))$  that satisfies properties (i), (ii)  and (iv) of Definition~\ref{def:WS} and the estimates  \eqref{eq:4.1}, \eqref{eq:4.1bis}.
 Moreover we have that the map $| (u+\phi)_{s} |^{p-2}  (u+\phi)_{s} \in L^{\infty}(0,T; L^{\frac{p}{p-1}}(I))$ admits for almost every time a weak derivative (in space) 
\begin{align*}
(| (u+\phi)_{s} |^{p-2}  (u+\phi)_{s})_{s}=u_{t} -\lambda_{1}(t) \sin (u+ \phi)  + \lambda_{2}(t) \cos(u+ \phi) \in L^{2}(0,T; L^{2}(I)), 
\end{align*}
and $[| (u + \phi)_{s}|^{p-2}  (u + \phi)_{s}]_{0}^{L}=0$ for a. e. $t \in (0,T)$.
\end{thm}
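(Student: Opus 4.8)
The plan is to assemble the convergence results of Lemmas~\ref{lem:4.1}--\ref{lem:4.4bis} together with the weak convergence of the discrete Lagrange multipliers (Lemma~\ref{lem:3.3}), and then pass to the limit in the discrete Euler--Lagrange equation. First I would record that properties (i) and (iv) of Definition~\ref{def:WS}, as well as the estimates \eqref{eq:4.1} and \eqref{eq:4.1bis}, are furnished directly by Lemma~\ref{lem:4.1}: the regularity $u \in L^{\infty}(0,T;W^{1,p}_{\rm per}(I)) \cap H^{1}(0,T;L^{2}(I))$ is part of \eqref{eq:4.2}, while property (iv) follows from the uniform convergence \eqref{eq:4.3}, since $u_{n}(\cdot,0)=u_{0,n}=u_{0}$ forces $u(\cdot,0)=u_{0}$ in $W^{1,p}_{\rm per}(I)$ (recall that $u(\cdot,t)\in W^{1,p}_{\rm per}(I)$ for all $t$).

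The core of the argument is establishing the weak formulation (ii). I would start from the vanishing first variation \eqref{firstvar} of the minimizer $u_{i,n}$ of \eqref{Min}: for every $\varphi\in W^{1,p}_{\rm per}(I)$ one has
\begin{align*}
0 = \int_{I} V_{i,n}\varphi\,ds &- \tilde{\lambda}_{1}(u_{i-1,n})\int_{I}\sin(u_{i,n}+\phi)\varphi\,ds + \tilde{\lambda}_{2}(u_{i-1,n})\int_{I}\cos(u_{i,n}+\phi)\varphi\,ds \\
&+ \int_{I}|(u_{i,n}+\phi)_{s}|^{p-2}(u_{i,n}+\phi)_{s}\,\varphi_{s}\,ds.
\end{align*}
Testing with $\varphi(\cdot,t)$ for $\varphi\in L^{\infty}(0,T;W^{1,p}_{\rm per}(I))$, integrating over $(0,T)$, and recalling Definitions~\ref{definition:2.4}--\ref{deflambdas} (so that $\partial_{t}u_{n}=V_{n}$, $\tilde{u}_{n}$, $\tilde{\lambda}^{j}_{n}$ replace $V_{i,n}$, $u_{i,n}$, $\tilde{\lambda}_{j}(u_{i-1,n})$), this yields the integrated identity
\begin{align*}
0 = \int_{0}^{T}\!\int_{I}\partial_{t}u_{n}\,\varphi\,ds\,dt &- \int_{0}^{T}\tilde{\lambda}^{1}_{n}\int_{I}\sin(\tilde{u}_{n}+\phi)\varphi\,ds\,dt + \int_{0}^{T}\tilde{\lambda}^{2}_{n}\int_{I}\cos(\tilde{u}_{n}+\phi)\varphi\,ds\,dt \\
&+ \int_{0}^{T}\!\int_{I}|(\tilde{u}_{n}+\phi)_{s}|^{p-2}(\tilde{u}_{n}+\phi)_{s}\,\varphi_{s}\,ds\,dt.
\end{align*}
I would then pass to the limit term by term: the first term converges to $\int_{0}^{T}\int_{I}u_{t}\varphi$ by the weak convergence $\partial_{t}u_{n}\rightharpoonup u_{t}$ in $L^{2}(0,T;L^{2}(I))$ from \eqref{VH}, and the last term converges to the corresponding $p$-Laplacian expression by Lemma~\ref{lem:4.4}.

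The delicate point, as anticipated in the introduction, are the two Lagrange-multiplier terms, where a merely weakly convergent factor $\tilde{\lambda}^{j}_{n}\rightharpoonup\lambda_{j}$ (Lemma~\ref{lem:3.3}) is paired with a nonlinear function of $\tilde{u}_{n}$. I would handle these by a weak--strong argument: setting $g_{n}(t):=\int_{I}\sin(\tilde{u}_{n}+\phi)\varphi\,ds$ and $g(t):=\int_{I}\sin(u+\phi)\varphi\,ds$, the uniform convergence $\tilde{u}_{n}\to u$ in $C^{0}([0,T]\times I)$ from Lemma~\ref{lem:4.2}, combined with the uniform bound $\|\varphi(\cdot,t)\|_{W^{1,p}(I)}\le C$, gives $\|g_{n}-g\|_{L^{\infty}(0,T)}\to 0$, hence $g_{n}\to g$ strongly in $L^{2}(0,T)$. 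Combining the uniform $L^{2}(0,T)$-bound on $\tilde{\lambda}^{1}_{n}$ with its weak convergence then yields $\int_{0}^{T}\tilde{\lambda}^{1}_{n}g_{n}\,dt\to\int_{0}^{T}\lambda_{1}g\,dt$, and analogously for the cosine term. This produces exactly \eqref{eq:1.2}, i.e.\ property (ii).

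Finally, for the ``moreover'' part I would invoke Lemma~\ref{lem:4.4}, which already provides that $w:=|(u+\phi)_{s}|^{p-2}(u+\phi)_{s}\in L^{\infty}(0,T;L^{p/(p-1)}(I))$ possesses a weak spatial derivative $w_{s}\in L^{2}(0,T;L^{2}(I))$ with $[w]_{0}^{L}=0$ for a.e.\ $t$. Integrating by parts in space in the $p$-Laplacian term of \eqref{eq:1.2} --- the boundary contribution $[w\varphi]_{0}^{L}$ vanishing since both $w$ and $\varphi$ are periodic --- rewrites (ii) as
\begin{align*}
\int_{0}^{T}\!\int_{I}\big(u_{t}-\lambda_{1}\sin(u+\phi)+\lambda_{2}\cos(u+\phi)-w_{s}\big)\varphi\,ds\,dt = 0
\end{align*}
for all $\varphi\in L^{\infty}(0,T;W^{1,p}_{\rm per}(I))$. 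Since the bracketed function lies in $L^{2}(0,T;L^{2}(I))$ and $L^{\infty}(0,T;W^{1,p}_{\rm per}(I))$ contains a dense subset of $L^{2}(0,T;L^{2}(I))$ (namely finite linear combinations of tensor products $\zeta(t)\psi(s)$ with $\psi\in C^{\infty}_{\rm per}(I)$), the fundamental lemma of the calculus of variations forces $w_{s}=u_{t}-\lambda_{1}\sin(u+\phi)+\lambda_{2}\cos(u+\phi)$ a.e., which is precisely the claimed identity, and $[w]_{0}^{L}=0$ is inherited from Lemma~\ref{lem:4.4}. I expect the weak--strong passage in the Lagrange-multiplier terms to be the principal obstacle, the remaining steps being bookkeeping built on the already-established convergences.
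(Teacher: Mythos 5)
Your proposal is correct and follows essentially the same route as the paper's proof: the same assembly of Lemma~\ref{lem:4.1} for (i), (iv) and the estimates, the same term-by-term passage to the limit in the time-integrated discrete Euler--Lagrange equation (Lemma~\ref{lem:4.4} for the $p$-Laplacian term, \eqref{VH} for the velocity, and the identical weak--strong pairing of $\tilde{\lambda}^{j}_{n}\rightharpoonup\lambda_{j}$ against the strongly convergent integrals of $\sin(\tilde{u}_{n}+\phi)\varphi$ and $\cos(\tilde{u}_{n}+\phi)\varphi$). The only cosmetic difference is in the final identification of $w_{s}$: the paper tests (ii) with tensor products $\tilde{\varphi}(t)\psi(s)$ and reads off $w_{s}=\xi$ directly from the resulting spatial identity, whereas you first import the existence of $w_{s}$ and $[w]_{0}^{L}=0$ from Lemma~\ref{lem:4.4} and then identify $w_{s}$ by integration by parts and density; both are valid and rest on the same ingredients.
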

\begin{proof}
Equation \eqref{firstvar} and Theorem~\ref{thm:3.1} yield that for any $\varphi \in L^{\infty} (0,T; W^{1,p}_{\rm per}(I))$  and (for almost every) $t \in  ((i-1)\tau_{n},  i \tau_{n}]$, $i=1, \ldots, n$ we have
\begin{align*}
0&=\int_{I}V_{n}(s,t) \varphi(s,t) ds -\tilde{\lambda}^{1}_{n}(t) \int_{I} \sin (\tilde{u}_{n} +\phi) \varphi ds + \tilde{\lambda}^{2}_{n}(t)\int_{I} \cos (\tilde{u}_{n} +\phi) \varphi ds \\
& \qquad + \int_{I} |(\tilde{u}_{n} +\phi)_{s}|^{p-2} (\tilde{u}_{n} +\phi)_{s} \varphi_{s} ds,
\end{align*}
so that integration in time yields
\begin{align*}
0=& \int_{0}^{T}\int_{I}V_{n}(s,t) \varphi(s,t) ds dt  -\int_{0}^{T}\tilde{\lambda}^{1}_{n}(t) \int_{I} \sin (\tilde{u}_{n} +\phi) \varphi ds dt + \int_{0}^{T}\tilde{\lambda}^{2}_{n}(t)\int_{I} \cos (\tilde{u}_{n} +\phi) \varphi ds dt\\
& \qquad + \int_{0}^{T}\int_{I} |(\tilde{u}_{n} +\phi)_{s}|^{p-2} (\tilde{u}_{n} +\phi)_{s} \varphi_{s} ds dt \qquad \text{  for any } \varphi \in L^{\infty} (0,T; W^{1,p}_{\rm per}(I)).
\end{align*}
We now let $n \to \infty$. The last integral is dealt with in Lemma~\ref{lem:4.4}, the first one in Lemma~\ref{lem:4.1}.
By Lemma~\ref{lem:3.3} we have that there exist $\lambda_{1}, \lambda_{2} \in L^{2}(0,T)$ such that 
$$\tilde{\lambda}^{j}_{n} \rightharpoonup \lambda_{j} \text{ weakly in } L^{2}(0,T).$$
Since $v_{n}(t):=\int_{I} \sin (\tilde{u}_{n} +\phi) \varphi(s,t) ds \to \int_{I} \sin (u +\phi) \varphi(s,t) ds =:v(t) $ by  Lemma~\ref{lem:4.2}, and $|v_{n}| \leq C(\vp)$, then also $v_{n} \to v$ in $L^{2}(0,T)$ and 
we infer that
$$\int_{0}^{T}\tilde{\lambda}^{1}_{n}(t) \int_{I} \sin (\tilde{u}_{n} +\phi) \varphi ds dt   \to \int_{0}^{T} \lambda_{1}(t)\int_{I} \sin (u +\phi) \varphi(s,t) ds dt $$
for $n \to \infty$. The integral with the $\tilde{\lambda}^{2}_{n}$  is treated in a similar way.

Finally note that
since the solution $u$  fullfills (ii) in Definition~\ref{def:WS}, we have  that by taking $\varphi (t,s)= \tilde{\varphi}(t)\psi(s)$ with $\tilde{\varphi}\in C^{\infty}_{0}(0,T)$ and $\psi \in W^{1,p}_{\rm per}(I)$
\begin{align}\label{bella}
\int_{I} \Big( u_{t} -\lambda_{1}(t) \sin (u+ \phi)  + \lambda_{2}(t) \cos(u+ \phi) \Big) \psi +  
 \left|u_{s} +  2\pi \frac{\eta}{L}\right|^{p-2} (u_{s} +  2\pi \frac{\eta}{L})  \psi_{s}
                                    \, ds = 0
\end{align}
for almost every time $t \in [0,T]$ and for any map $ \psi \in W^{1,p}_{\rm per}(I)$. In other words $\int_{I}\xi \psi + w\psi_{s} ds=0$ for all $\psi \in W^{1,p}_{\rm per}(I)$, where $\xi: =u_{t} -\lambda_{1}(t) \sin (u+ \phi)  + \lambda_{2}(t) \cos(u+ \phi) \in L^{2}(I)$
and $w:=\left|u_{s} +  2\pi \frac{\eta}{L}\right|^{p-2} (u_{s} +  2\pi \frac{\eta}{L}) \in L^{\frac{p}{p-1}}$.
This gives that $w_{s}=\xi$. 
\end{proof}
It remains to show that property $(iii)$ of Definition~\ref{def:WS} is also fulfilled by the map $u$ of Theorem~\ref{thm:existence}. In other words  we would like to show $\frac{d}{dt} \int_{I}T(u+\phi) ds=0$ (where $T$ is as in \eqref{defiT} with $\theta=u+\phi$). 
First we show  that  the discrete Lagrange multipliers  fulfill the expected property \eqref{vorrei} in the limit equation for almost every time.
\begin{lem}\label{lem:3.16}
Let the assumptions of Theorem~\ref{thm:existence} hold. Then we have that
\begin{align*}
 \int_{0}^{T}\varphi(t)\int_{I} \Big \langle \left (\begin{array}{c} \lambda_{1}(t)\\ \lambda_{2}(t) \end{array} \right), &
 \left (\begin{array}{c} -\sin (u +\phi)\\ \cos (u +\phi) \end{array} \right) \Big \rangle   \left (\begin{array}{c} -\sin (u +\phi)\\ \cos (u +\phi) \end{array} \right)ds dt\\
 &= 
 \int_{0}^{T}\varphi(t)\int_{I} |(u+\phi)_{s}|^{p}  \left (\begin{array}{c} \cos (u +\phi)\\ \sin (u +\phi) \end{array} \right)ds dt\\
 &  = \int_{0}^{T} \varphi(t)
 \int_{I} (|(u+\phi)_{s}|^{p-2} (u+\phi)_{s})_{s} \left (\begin{array}{c} -\sin (u +\phi)\\ \cos (u +\phi) \end{array} \right)ds dt   
 \end{align*}
 for any $\varphi \in C^{\infty}_{0}(0,T)$.
In particular $\lambda_{j}(t)= \tilde{\lambda}_{j}(u(\cdot, t))$ for $j=1,2$ and almost every $t \in (0,T)$. 
\end{lem}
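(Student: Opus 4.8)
The plan is to obtain the asserted identities by passing to the limit in the discrete identity of Lemma~\ref{lem:211}. Write $\vec{\lambda}_n(t):=(\tilde{\lambda}_n^1(t),\tilde{\lambda}_n^2(t))$. Then \eqref{eqRuleL} says that for every $t\in(0,T]$
\begin{align*}
\int_I \langle \vec{\lambda}_n(t), N(\tilde{U}_n+\phi)\rangle\, N(\tilde{U}_n+\phi)\, ds
= \int_I |(\tilde{U}_n+\phi)_s|^{p}\, T(\tilde{U}_n+\phi)\, ds,
\end{align*}
where $T(\theta)=(\cos\theta,\sin\theta)$ and $N(\theta)=(-\sin\theta,\cos\theta)$. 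I multiply this by a fixed $\varphi\in C^\infty_0(0,T)$ and integrate over $(0,T)$. Since $\mathrm{supp}\,\varphi$ is a compact subset of $(0,T)$, for all large $n$ we have $\tau_n<\mathrm{dist}(\mathrm{supp}\,\varphi,\{0\})$, so on $\mathrm{supp}\,\varphi$ the function $\tilde{U}_n(\cdot,t)$ coincides with a genuine minimizer $u_{i-1,n}$, $i-1\ge1$; in particular \eqref{periodicityproperty} and Lemma~\ref{lemma3bis} are available. Integrating by parts in $s$, with the boundary term vanishing by periodicity exactly as in \eqref{lagmult}, I rewrite the right-hand side as $\int_I (|(\tilde{U}_n+\phi)_s|^{p-2}(\tilde{U}_n+\phi)_s)_s\, N(\tilde{U}_n+\phi)\, ds$.

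I then let $n\to\infty$. On the left-hand side, $\tilde{U}_n\to u$ in $C^0$ by Lemma~\ref{lem:4.2}, hence $\varphi(t)\int_I\langle\,\cdot\,,N(\tilde U_n+\phi)\rangle N(\tilde U_n+\phi)\,ds$ is a matrix converging strongly in $L^2(0,T)$ to the one built from $N(u+\phi)$, while $\vec{\lambda}_n\rightharpoonup(\lambda_1,\lambda_2)$ weakly in $L^2(0,T)$ by Lemma~\ref{lem:3.3}; a weak--strong pairing gives convergence to $\int_0^T\varphi(t)\int_I\langle(\lambda_1,\lambda_2),N(u+\phi)\rangle N(u+\phi)\,ds\,dt$, the first integral in the statement. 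On the right-hand side I split $N(\tilde U_n+\phi)=N(u+\phi)+(N(\tilde U_n+\phi)-N(u+\phi))$. Arguing as in Lemma~\ref{lem:4.4}, using the $L^2(0,T;W^{1,2}(I))$-bounds of Lemma~\ref{lemma3bis} together with the identification of the limit provided by Lemma~\ref{lem:4.4bis} (on any $[\delta,T]$ with $\delta<\mathrm{dist}(\mathrm{supp}\,\varphi,\{0\})$), the quantity $|(\tilde{U}_n+\phi)_s|^{p-2}(\tilde{U}_n+\phi)_s$ converges weakly in $L^2(\delta,T;W^{1,2}(I))$ to $|(u+\phi)_s|^{p-2}(u+\phi)_s$, so its $s$-derivative pairs with the fixed $L^2$-function $\varphi\,N(u+\phi)$; the remaining term vanishes since $\varphi(N(\tilde U_n+\phi)-N(u+\phi))\to0$ strongly in $L^2$ while the derivatives stay bounded. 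This yields the third integral in the statement, establishing the equality of the first and third integrals.

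The equality of the third integral with the middle one is pure integration by parts in the limit: by Theorem~\ref{thm:existence}, $|(u+\phi)_s|^{p-2}(u+\phi)_s$ has for a.e.\ $t$ a weak $s$-derivative in $L^2(I)$ and satisfies $[|(u+\phi)_s|^{p-2}(u+\phi)_s]_0^L=0$. Using $(N(u+\phi))_s=-(u+\phi)_s\,T(u+\phi)$ and integrating by parts (no boundary contribution) converts $\int_I(|(u+\phi)_s|^{p-2}(u+\phi)_s)_s\,N(u+\phi)\,ds$ into $\int_I|(u+\phi)_s|^{p}\,T(u+\phi)\,ds$, precisely as in \eqref{lagmult}. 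Finally, since the chain of identities holds for every $\varphi\in C^\infty_0(0,T)$, I read off the pointwise relation $\int_I\langle(\lambda_1(t),\lambda_2(t)),N(u+\phi)\rangle N(u+\phi)\,ds=\int_I|(u+\phi)_s|^{p}T(u+\phi)\,ds$ for a.e.\ $t$, i.e.\ $(\lambda_1(t),\lambda_2(t))\,A_T(u(\cdot,t)+\phi)=\int_I|(u+\phi)_s|^{p}T(u+\phi)\,ds$; inverting $A_T$, nondegenerate by Lemma~\ref{lemLM}, and comparing with \eqref{defvecL} gives $(\lambda_1(t),\lambda_2(t))=(\tilde{\lambda}_1(u(\cdot,t)),\tilde{\lambda}_2(u(\cdot,t)))$.

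The main obstacle is the passage to the limit in the nonlinear term $\int_I|(\tilde{U}_n+\phi)_s|^{p}\,T(\tilde{U}_n+\phi)\,ds$, which cannot be handled directly because the gradients converge only weakly in $L^p$. The resolution is exactly the discrete integration by parts performed first, which trades the $p$-th power for the quantity $|(\tilde{U}_n+\phi)_s|^{p-2}(\tilde{U}_n+\phi)_s$ that converges weakly in $L^2(0,T;W^{1,2}(I))$; the compact support of $\varphi$ is what lets us ignore the irregular initial step $t\in[0,\tau_n]$, where $\tilde{U}_n=u_0$ lacks the regularity of Lemma~\ref{lemma3bis} and \eqref{periodicityproperty} may fail.
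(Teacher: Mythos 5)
Your proposal is correct and follows essentially the same route as the paper: starting from Lemma~\ref{lem:211}, multiplying by $\varphi$ and integrating in time, handling the nonlinear term by a discrete integration by parts (valid on $\mathrm{supp}\,\varphi$ thanks to \eqref{periodicityproperty} and the compact support avoiding $[0,\tau_n]$), splitting $N(\tilde U_n+\phi)=N(u+\phi)+(N(\tilde U_n+\phi)-N(u+\phi))$, and combining the uniform convergence of $\tilde U_n$ with the $L^2(0,T;W^{1,2}(I))$ bounds of Lemma~\ref{lemma3bis} and the identification from Lemma~\ref{lem:4.4bis}. The only cosmetic difference is that the paper integrates by parts back in $s$ before passing to the limit in the main term (pairing $w_n$ with $\psi_s$ for $\psi=\varphi\sin(u+\phi)$, $-\varphi\cos(u+\phi)$), whereas you pair $(w_n)_s$ weakly with $\varphi N(u+\phi)$ directly; these are equivalent.
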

\begin{proof}  The integral expression follows from multiplying by $\varphi$ and
integrating in time expression \eqref{eqRuleL} and passing to the limit (recall Lemma~\ref{lem:3.3}, Lemma~\ref{lem:4.2} and Lemma~\ref{lem:4.4bis}). Integration by parts is then used for the second equality (recall the second statement in Lemma~\ref{lem:4.4}).
We show here some details for the most delicate term, that is we want to show the convergence
\begin{align*}
 \int_{0}^{T}\varphi(t)\int_{I} |(\tilde{U}_{n}+\phi)_{s}|^{p}  \left (\begin{array}{c} \cos (\tilde{U}_{n} +\phi)\\ \sin (\tilde{U}_{n} +\phi) \end{array} \right)ds dt\to  \int_{0}^{T}\varphi(t)\int_{I} |(u+\phi)_{s}|^{p}  \left (\begin{array}{c} \cos (u +\phi)\\ \sin (u +\phi) \end{array} \right)ds dt.
\end{align*}
Let $\delta \in (0, T)$ be so small that $supp\, \varphi \subset (\delta, T)$.
Using integration by parts, \eqref{periodicityproperty}, and  provided $\tau_{n} < \delta$ (which is true for $n$ sufficiently large), we can write
\begin{align*}
&\int_{0}^{T}\varphi(t)\int_{I} |(\tilde{U}_{n}+\phi)_{s}|^{p}  \left (\begin{array}{c} \cos (\tilde{U}_{n} +\phi)\\ \sin (\tilde{U}_{n} +\phi) \end{array} \right)ds dt\\
&=\int_{\delta}^{T}\varphi(t)\int_{I} |(\tilde{U}_{n}+\phi)_{s}|^{p}  \left (\begin{array}{c} \cos (\tilde{U}_{n} +\phi)\\ \sin (\tilde{U}_{n} +\phi) \end{array} \right) ds dt\\
&=\int_{\delta}^{T}\varphi(t)\int_{I} |(\tilde{U}_{n}+\phi)_{s}|^{p-2}(\tilde{U}_{n}+\phi)_{s}(\tilde{U}_{n}+\phi)_{s}  \left (\begin{array}{c} \cos (\tilde{U}_{n} +\phi)\\ \sin (\tilde{U}_{n} +\phi) \end{array} \right) ds dt\\
&=\int_{\delta}^{T}\varphi(t)\int_{I} (|(\tilde{U}_{n}+\phi)_{s}|^{p-2} (\tilde{U}_{n}+\phi)_{s})_{s} \left (\begin{array}{c} -\sin (\tilde{U}_{n} +\phi)\\ \cos (\tilde{U}_{n} +\phi) \end{array} \right)ds dt\\
&=\int_{\delta}^{T}\varphi(t)\int_{I} (|(\tilde{U}_{n}+\phi)_{s}|^{p-2} (\tilde{U}_{n}+\phi)_{s})_{s} \left (\begin{array}{c} -\sin (\tilde{U}_{n} +\phi) + \sin (u +\phi)\\ \cos (\tilde{U}_{n} +\phi) -\cos(u +\phi)\end{array} \right)ds dt\\
& \quad + \int_{\delta}^{T}\varphi(t)\int_{I} (|(\tilde{U}_{n}+\phi)_{s}|^{p-2} (\tilde{U}_{n}+\phi)_{s})_{s} \left (\begin{array}{c} -\sin (u(s,t) +\phi)\\ \cos(u(s,t) +\phi)\end{array} \right)ds dt\\
& =J_{1}+ J_{2} .
\end{align*}
We have that each component $J_{1}^{r}$ of $J_{1}= (J_{1}^{1}, J_{1}^{2})$ can be controlled by
\begin{align*}
|J_{1}^{r}| \leq C \|\varphi\|_{L^{2}(0,T)} \|\tilde{U}_{n} - u \|_{C^{0}([0,T] \times I)} \Big(\int_{\delta}^{T} \| |(\tilde{U}_{n}(s,t)+\phi)_{s}|^{p-2} (\tilde{U}_{n}(s,t)+\phi)_{s} \|_{W^{1,2}(I)}^{2} dt\Big )^{1/2}.
\end{align*}
The last integral is bounded (we have used this fact already in the proof of Lemma~\ref{lem:4.4}, see comments before \eqref{eq:4.18}). The uniform convergence of $\tilde{U}_{n}$ is proven in Lemma~\ref{lem:4.2}. So $J_{1} $ goes to zero as $n \to \infty$.
On the other hand using again \eqref{periodicityproperty} we can write
\begin{align*}
J_{2}=\int_{\delta}^{T}& \varphi(t)\int_{I} |(\tilde{U}_{n}(s,t)+\phi)_{s}|^{p-2} (\tilde{U}_{n}(s,t)+\phi)_{s}(u(s,t)+\phi)_{s} \left (\begin{array}{c} \cos (u(s,t) +\phi)\\ \sin(u(s,t) +\phi)\end{array} \right)ds dt
\end{align*}
and 
\begin{align}
J_{2} \to \int_{0}^{T}\varphi(t)\int_{I} |(u+\phi)_{s}|^{p}  \left (\begin{array}{c} \cos (u +\phi)\\ \sin (u +\phi) \end{array} \right)ds dt
\end{align}
by applying Lemma~\ref{lem:4.4bis} in each space component of $J_{2}$ with test function $\psi(s,t)= \varphi(t) \sin (u(s,t)+\phi(s))$ respectively $\psi(s,t)= -\varphi(t) \cos (u(s,t)+\phi(s))$ which belong to $L^{\infty}(0,T, W^{1,p}_{\rm per}(I))$ since $u \in L^{\infty}(0,T;W^{1,p}_{\rm per}(I))$.
\end{proof}

Next we show that the curves stay closed, i.e. \eqref{constraint} is fulfilled along the flow.
\begin{lem}\label{lem:closedness}
Let the assumptions of Theorem~\ref{thm:existence} hold.
Then we have that 
\begin{align}\label{quasi1.3}
\int_{I} \cos (u_{0}+\phi) ds = \int_{I} \cos (u +\phi) ds \qquad \text{ and } \qquad \int_{I} \sin (u_{0}+\phi) ds = \int_{I} \sin (u +\phi) ds
\end{align}
 holds on $[0,T)$.
\end{lem}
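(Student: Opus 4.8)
The plan is to show that the vector-valued quantity $\tfrac{d}{dt}\int_I T(u+\phi)\,ds$ vanishes weakly in time, where $T(u+\phi)=(\cos(u+\phi),\sin(u+\phi))$, and then to integrate. Writing $f(t):=\int_I\cos(u+\phi)\,ds$ and $g(t):=\int_I\sin(u+\phi)\,ds$, the regularity $u\in H^1(0,T;L^2(I))\cap C^0([0,T]\times I)$ from Lemma~\ref{lem:4.1} guarantees that $f$ and $g$ are absolutely continuous on $[0,T]$, with
\begin{align*}
(f'(t),g'(t))=\int_I u_t\,N(u+\phi)\,ds\qquad\text{for a.e. }t,
\end{align*}
where $N(u+\phi)=(-\sin(u+\phi),\cos(u+\phi))$. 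Thus it suffices to prove that $\int_I u_t\,N(u+\phi)\,ds=0$ for a.e. $t$.

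First I would insert the pointwise-in-time form of the equation. By Theorem~\ref{thm:existence} (see \eqref{bella}), for a.e. $t$ the map $w:=|(u+\phi)_s|^{p-2}(u+\phi)_s$ lies in $W^{1,2}(I)$, satisfies $[w]_0^L=0$, and obeys $w_s=u_t-\lambda_1\sin(u+\phi)+\lambda_2\cos(u+\phi)=u_t+\langle\vec{\lambda},N(u+\phi)\rangle$ with $\vec{\lambda}=(\lambda_1,\lambda_2)$. Hence $u_t=w_s-\langle\vec{\lambda},N(u+\phi)\rangle$, and pairing with $N(u+\phi)$ and integrating over $I$ gives
\begin{align*}
\int_I u_t\,N(u+\phi)\,ds=\int_I w_s\,N(u+\phi)\,ds-\int_I\langle\vec{\lambda},N(u+\phi)\rangle\,N(u+\phi)\,ds.
\end{align*}
Now Lemma~\ref{lem:3.16} asserts precisely that, tested against any $\varphi\in C_0^\infty(0,T)$, the two integrals on the right-hand side coincide (its first and third lines give $\int_I\langle\vec{\lambda},N\rangle N\,ds=\int_I w_s N\,ds$ in the weak-in-time sense). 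Therefore $\int_0^T\varphi(t)\int_I u_t\,N(u+\phi)\,ds\,dt=0$ for every $\varphi\in C_0^\infty(0,T)$, and since $t\mapsto\int_I u_t\,N(u+\phi)\,ds$ belongs to $L^2(0,T)$ (as $u_t\in L^2$ and $N$ is bounded), we conclude $\int_I u_t\,N(u+\phi)\,ds=0$ for a.e. $t$, i.e. $f'=g'=0$ a.e.

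It then follows that $f$ and $g$ are constant on $[0,T)$. Evaluating at $t=0$, using $u(\cdot,0)=u_0$ from property (iv) of Definition~\ref{def:WS} together with the uniform convergence $u(\cdot,t)\to u_0$ as $t\downarrow0$ from \eqref{eq:4.3}, yields $f(t)=\int_I\cos(u_0+\phi)\,ds$ and $g(t)=\int_I\sin(u_0+\phi)\,ds$, which is \eqref{quasi1.3}. The step I expect to require the most care is the rigorous differentiation in the first paragraph: justifying that $f,g$ are absolutely continuous and that the chain rule $(f',g')=\int_I u_t\,N(u+\phi)\,ds$ holds. This I would obtain by testing difference quotients of $u$ against $\sin,\cos$ and passing to the limit using that $\tau^{-1}(u(\cdot,t+\tau)-u(\cdot,t))\to u_t$ in $L^2(I)$ at a.e. $t$ combined with the continuity of $u$. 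Matching Lemma~\ref{lem:3.16}, which is stated only in the weak-in-time formulation, with this pointwise identity is the only other delicate point, but it is resolved by the $L^2(0,T)$ membership noted above.
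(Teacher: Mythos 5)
Your proposal is correct and follows essentially the same route as the paper: the paper's proof likewise repeats the computation of \eqref{derivataT} using the pointwise-in-time identity \eqref{bella} together with Lemma~\ref{lem:3.16} to conclude $\frac{d}{dt}\int_I T(u+\phi)\,ds=0$ almost everywhere, then integrates in time and uses the continuity of $u$. Your additional care about the absolute continuity of $f,g$ and the passage from the weak-in-time statement of Lemma~\ref{lem:3.16} to the a.e.\ pointwise identity is a faithful elaboration of the same argument.
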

\begin{proof}  
We can repeat the calculations done in \eqref{derivataT}, use \eqref{bella} and the previous Lemma~\ref{lem:3.16} to infer that
 $\frac{d}{dt} \int_{I} T(u+\phi) ds =0 $ almost everywhere. The claim then follows by integration in time and using the continuity of $u$.
\end{proof}

\begin{proof}[Proof of Theorem { \ref{Theorem:1.1}}]
This is a direct consequence  of Theorem~\ref{thm:existence} and Lemma~\ref{lem:closedness}.
\end{proof}


\subsubsection{On regularity and uniqueness of weak solutions for \eqref{eq:P}  for $p\geq 2$}

To infer stronger regularity properties of the solution we will use Lemma~\ref{controlVel} and Lemma~\ref{lem:controlVelpBig}. Before we  give our statements, a few comments on the choice of the initial data are due. In both of the aforementioned  lemmata we need that the initial data $u_{0} \in W^{1,p}_{\rm per}(I) $ is such that the weak derivative $(|(u_{0}+\phi)_{s}|^{p-2}(u_{0}+\phi)_{s})_{s}$ exists and belongs to $L^{2}(I)$ and $[|(u_{0}+\phi)_{s}|^{p-2}(u_{0}+\phi)_{s}]_{0}^{L}=0$. Note that if $p=2$ then this is equivalent to saying that $u \in W^{2,2}_{\rm per}(I)$.
In Lemma~\ref{lem:controlVelpBig} the additional smallness condition on $u$ and Lemma~\ref{mehrreg} below yields the same conclusion: thus, from now on, we just ask for $u_{0} \in W^{2,2}_{\rm per}(I)$.

\begin{lem}\label{mehrreg} Assume $p\geq 2$. 
 Let $f \in W^{1,p}_{\rm per}(I)$ be a map such that $g:=|f_{s}|^{p-2}f_{s}$ admits weak derivative $g_{s} \in L^{2}(I)$, $g(0)=g(L)$, and $|f_{s}| \geq \alpha>0$ almost everywhere. Then $f \in W^{2,2}_{\rm per}(I)$.
Conversely if $f \in W^{2,2}_{\rm per}(I)$ with $|f_{s}| \geq \alpha>0$ on $I$ then $g \in W^{1,2}_{\rm per}(I) $.
We have that
$$ g_{s}=(p-1)|f_{s}|f_{ss}. $$
\end{lem}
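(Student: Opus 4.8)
The plan is to exploit the pointwise relation $g = \Phi(f_s)$, where $\Phi(x) := |x|^{p-2}x$, and to transfer Sobolev regularity across the nonlinearity $\Phi$ and its inverse, using the nondegeneracy $|f_s| \geq \alpha > 0$ in an essential way. Since $p \geq 2$, the map $\Phi$ is $C^1$ on $\mathbb{R}$ with $\Phi'(x) = (p-1)|x|^{p-2}$ and is a strictly increasing bijection of $\mathbb{R}$ with inverse $\Phi^{-1}(y) = |y|^{(2-p)/(p-1)}y$; the inverse is $C^1$ away from the origin but singular at $y=0$, which is precisely why the lower bound on $|f_s|$ will be indispensable.

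For the converse (the easier direction) I would start from $f \in W^{2,2}_{\rm per}(I)$, so that $f_s \in W^{1,2}(I) \hookrightarrow C^0(\bar I)$ and $f_{ss} \in L^2(I)$. As $f_s$ is continuous on a compact interval with $|f_s| \geq \alpha$, its range is a compact subset of $\{|x| \geq \alpha\}$, on which $\Phi'$ is continuous and bounded. After modifying $\Phi$ outside a neighbourhood of this range to a globally $C^1$ map with bounded derivative (which does not alter $\Phi \circ f_s$), the standard chain rule for compositions of $C^1$ maps with bounded derivative and $W^{1,2}$ functions yields $g = \Phi(f_s) \in W^{1,2}(I)$ with $g_s = \Phi'(f_s)\,f_{ss} = (p-1)|f_s|^{p-2} f_{ss} \in L^2(I)$, the right-hand side being a product of a bounded function and an $L^2$ function. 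Periodicity transfers because $f_s \in C_{\rm per}$ forces $g(0) = \Phi(f_s(0)) = \Phi(f_s(L)) = g(L)$, so $g \in W^{1,2}_{\rm per}(I)$.

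For the forward direction I would first upgrade the integrability of $g$. From $|g| = |f_s|^{p-1}$ and $f_s \in L^p(I)$ one gets $g \in L^{p'}(I) \subseteq L^1(I)$ with $p' = p/(p-1)$; together with the hypothesis $g_s \in L^2(I) \subseteq L^1(I)$ this gives $g \in W^{1,1}(I) \hookrightarrow C^0(\bar I)$, so $g$ is bounded and in fact $g \in W^{1,2}(I)$. Since $|f_s| \geq \alpha$ a.e., the continuous function $g$ satisfies $|g| = |f_s|^{p-1} \geq \alpha^{p-1} > 0$ everywhere, hence takes values in $\{|y| \geq \alpha^{p-1}\}$, where $\Phi^{-1}$ is $C^1$ with bounded derivative. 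Modifying $\Phi^{-1}$ outside this region to a globally $C^1$ map with bounded derivative and applying the Sobolev chain rule to $f_s = \Phi^{-1}(g)$ then shows $f_s \in W^{1,2}(I)$, i.e. $f \in W^{2,2}(I)$; moreover $g(0) = g(L)$ propagates to $f_s(0) = \Phi^{-1}(g(0)) = \Phi^{-1}(g(L)) = f_s(L)$, so $f \in W^{2,2}_{\rm per}(I)$. Once $f_{ss} \in L^2$ is established, differentiating $g = \Phi(f_s)$ by the chain rule a final time produces the identity $g_s = (p-1)|f_s|^{p-2} f_{ss}$.

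I expect the main obstacle to be the forward direction, and within it the justification of the chain rule through the singular inverse $\Phi^{-1}$. The nondegeneracy $|f_s| \geq \alpha$ is exactly what rescues the argument: it confines $g$ to a region bounded away from $0$, on which $\Phi^{-1}$ is smooth with bounded derivative, and it is also what permits the truncation that makes the Sobolev composition rule applicable. A secondary point demanding care is the preliminary bootstrap $g \in L^{p'},\ g_s \in L^2 \Rightarrow g \in C^0(\bar I) \subseteq L^2$, needed so that $g$ genuinely qualifies as a $W^{1,2}$ function before the nonlinearity is inverted.
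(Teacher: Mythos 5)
Your proof is correct and follows essentially the same route as the paper's: bootstrap $g$ into $W^{1,2}(I)\cap C^0(\bar I)$ from $g\in L^{p/(p-1)}(I)$ and $g_s\in L^2(I)$, use $|g|\ge\alpha^{p-1}>0$ to keep the inverse nonlinearity away from its singularity at the origin, and conclude by the Sobolev chain rule (the paper phrases this step through $|f_s|=|g|^{1/(p-1)}$ together with the constant sign of $f_s$, while you invert $\Phi$ explicitly; the difference is cosmetic). Note that the identity you derive, $g_s=(p-1)|f_s|^{p-2}f_{ss}$, is the correct one --- the exponent $p-2$ appears to have been dropped in the lemma's displayed formula.
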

\begin{proof}
 Since by assumption  $g \in L^{q}(I)$ with $q=p/(p-1) \leq 2$ and $g_{s} \in L^{2}(I)$, then $g \in W^{1,q}(I)$. By embedding theory $g$ is continuos and in particular $g \in L^{2}$. Hence $g \in W^{1,2}_{\rm per}(I)$. Then also $|g|=|f_{s}|^{p-1} \in W^{1,2}_{\rm per}(I)$ and $|g|\geq \alpha ^{p-1}>0$ in $I$. Therefore also $|g|^{1/(p-1)} =|f_{s}|\in W^{1,2}(I)$. But then $f_{s}$ has a constant sign by hypothesis hence $f \in W^{2,2}(I)$.
\end{proof}

We start by making a remark on the regularity of the weak solutions. This allows us to  infer that the energy decreases along the flow.
\begin{lem}\label{lem:3.18}
Let the assumptions of Theorem~\ref{thm:existence} hold.
Furthermore let one of the following assumptions be fulfilled:

(i) $p=2$, $u_{0} \in W^{2,2}_{\rm per}(I)$,

or

(ii) $p>2$, $u_{0} \in W^{2,2}_{\rm per}(I)$ and $u_{0}$ fulfills the smallness conditions \eqref{SC} given in Lemma~\ref{lem:controlVelpBig}.
Then have that, 
\begin{align*}
u \in H^{1}(0,T;W^{1,2}(I)), \qquad \lambda_{j} \in H^{1}(0,T), \quad j=1,2,
\end{align*} 
and the energy decreases along the flow, that is
$$ \frac{d}{dt} F_{p}(u)  \leq 0$$
holds for almost every time $t \in   (0,T)$.
\end{lem}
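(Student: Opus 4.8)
The plan is to establish the two regularity assertions first and then derive the energy inequality by reproducing, at the level of the weak solution, the computation already carried out in the smooth setting of Lemma~\ref{energieabnahme}.

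First I would upgrade the time regularity of $u$. Under either assumption (i) or (ii) the hypotheses of Lemma~\ref{controlVel} (for $p=2$), respectively Lemma~\ref{lem:controlVelpBig} (for $p>2$, using the smallness conditions \eqref{SC}), are satisfied, so that the discrete velocities obey the uniform bounds $\int_0^T\int_I |(V_n)_s|^2\,ds\,dt \le C$ and $\max_t \int_I |V_n|^2\,ds \le C$. Since $V_n = \partial_t u_n \rightharpoonup u_t$ weakly in $L^2(0,T;L^2(I))$ by \eqref{VH} and $(V_n)_s$ is bounded in $L^2(0,T;L^2(I))$, a subsequence of $(V_n)_s$ converges weakly in $L^2(0,T;L^2(I))$; testing against $\psi \in C_0^\infty((0,T)\times I)$ and integrating by parts identifies the limit as $(u_t)_s$, whence $u_t \in L^2(0,T;W^{1,2}(I))$. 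Together with $u \in L^\infty(0,T;W^{1,p}_{\rm per}(I)) \subset L^2(0,T;W^{1,2}(I))$ (using $p\ge 2$ on the bounded interval $I$), this gives $u \in H^1(0,T;W^{1,2}(I))$. The claim $\lambda_j \in H^1(0,T)$ is then immediate: by Lemma~\ref{lem:Lambda} (for $p=2$), respectively Lemma~\ref{lem:lambda2plus} (for $p>2$), the piecewise linear multipliers $\Lambda_n^j$ converge to $\Lambda_j \in H^1(0,T)$, and Lemma~\ref{lem:3.16} identifies $\Lambda_j = \lambda_j$.

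For the energy inequality the central step is the chain rule
\[
\frac{d}{dt}F_p(u) = \int_I |(u+\phi)_s|^{p-2}(u+\phi)_s\,(u_t)_s\,ds \qquad \text{for a.e. } t .
\]
I would prove this via difference quotients: writing $\tfrac1p(|a|^p-|b|^p) = \int_0^1 |b+\sigma(a-b)|^{p-2}(b+\sigma(a-b))\,d\sigma\,(a-b)$ with $a=(u(t+h)+\phi)_s$ and $b=(u(t)+\phi)_s$, one has $\tfrac{u(t+h)-u(t)}{h}\to u_t(t)$ in $W^{1,2}(I)$ for a.e. $t$ (a standard property of $H^1(0,T;W^{1,2}(I))$), so that $(a-b)/h \to (u_t)_s$ and $a\to b$ in $L^2(I)$. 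By Lemma~\ref{mehrreg}, for a.e. $t$ one has $u(\cdot,t)+\phi \in W^{2,2}_{\rm per}(I)$ (for $p>2$ using the lower bound $|(u+\phi)_s|\ge \alpha>0$ from \eqref{SC}, for $p=2$ directly), while the bound $\|(u+\phi)_s\|_{L^\infty(I)}^{p-1} \le C(1+\|u_t\|_{L^2(I)})$ inherited from Lemma~\ref{lemma3bis}, combined with $u_t\in L^\infty(0,T;L^2(I))$, yields $(u+\phi)_s \in L^\infty$ uniformly in $t$; a dominated convergence argument then lets one pass to the limit in the kernel $\int_0^1 |b+\sigma(a-b)|^{p-2}(b+\sigma(a-b))\,d\sigma \to |b|^{p-2}b$, giving the formula.

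With the chain rule in hand I would integrate by parts in space. For a.e. $t$ the map $w:=|(u+\phi)_s|^{p-2}(u+\phi)_s$ lies in $W^{1,2}_{\rm per}(I)$ with $[w]_0^L=0$ (Theorem~\ref{thm:existence} and Lemma~\ref{mehrreg}), and $u_t$ is periodic, so the boundary term vanishes and $\frac{d}{dt}F_p(u) = -\int_I w_s\,u_t\,ds$. Substituting the weak equation \eqref{bella}, namely $w_s = u_t - \lambda_1\sin(u+\phi)+\lambda_2\cos(u+\phi)$, produces
\[
\frac{d}{dt}F_p(u) = -\int_I |u_t|^2\,ds + \lambda_1\int_I u_t\sin(u+\phi)\,ds - \lambda_2\int_I u_t\cos(u+\phi)\,ds .
\]
Finally, differentiating in time the closedness identities of Lemma~\ref{lem:closedness} (which are constant in $t$ and whose time derivative is justified by $u\in H^1(0,T;L^2(I))$) gives $\int_I u_t\sin(u+\phi)\,ds = \int_I u_t\cos(u+\phi)\,ds = 0$ for a.e. $t$, so the multiplier terms drop and $\frac{d}{dt}F_p(u)=-\int_I |u_t|^2\,ds \le 0$. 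The main obstacle is the chain-rule step for $p>2$, where passing to the limit in the difference quotient genuinely requires both the uniform spatial $L^\infty$-bound on $(u+\phi)_s$ and the $W^{2,2}$-in-space regularity; for $p=2$ the integrand is bilinear and this step is routine.
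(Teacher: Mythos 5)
Your proof is correct and follows essentially the same route as the paper: the velocity bounds of Lemma~\ref{controlVel} (resp.\ Lemma~\ref{lem:controlVelpBig}) upgrade $u$ to $H^{1}(0,T;W^{1,2}(I))$ and give $\lambda_{j}\in H^{1}(0,T)$ via Lemmas~\ref{lem:Lambda}, \ref{lem:lambda2plus} and \ref{lem:3.16}, and the energy inequality is the computation of Lemma~\ref{energieabnahme} transplanted to the weak setting using \eqref{bella} and Lemma~\ref{lem:closedness}. The only difference is that you spell out the difference-quotient justification of the chain rule, which the paper leaves implicit in the phrase ``repeat the calculation performed in Lemma~\ref{energieabnahme}''.
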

\begin{proof}
First of all notice that Lemma~\ref{controlVel} (case $p=2$) and Lemma~\ref{lem:controlVelpBig} (case $p>2$) yield the additional information that
 $$\int_{0}^{T}|(V_{n})_{s}|^{2} ds dt \leq C. $$

The regularity of the Lagrange multipliers follows now from  Lemma~\ref{lem:Lambda} and Lemma~\ref{lem:lambda2plus}.
Recalling \eqref{VH} in Lemma~\ref{lem:4.1} we infer that not only $V_{n}=\partial_{t} u_{n} \rightharpoonup u_{t}$ in $L^{2}(0,T; L^{2}(I))$ but also that
\begin{align*}
(V_{n})_{s}\rightharpoonup u_{ts}  \qquad \text{ in } \qquad L^{2}(0,T; L^{2}(I))
\end{align*}
In other words $u \in H^{1}(0,T;W^{1,2}(I))$. Then we can repeat the calculation performed in Lemma~\ref{energieabnahme} using now \eqref{bella} and Lemma~\ref{lem:closedness}. This gives the claim.
\end{proof}

Next we show  that under the assumptions of Lemma \ref{lem:controlVelpBig} the pointwise bounds derived in the mentioned lemma carry over to the solution.
\begin{lem}\label{lem:3.19}
Let the assumptions of Theorem~\ref{thm:existence} hold.
Furthermore let $p>2$, $u_{0} \in W^{2,2}_{\rm per}(I)$ and $u_{0}$ fulfills the smallness conditions \eqref{SC} given in Lemma~\ref{lem:controlVelpBig}. Then
$$ C \geq |(u+\phi)_{s}|^{p-1} \geq \frac{1}{4}|\phi_{s}|^{p-1}$$
holds on $I \times  [0,T]$.
\end{lem}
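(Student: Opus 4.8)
The plan is to transfer to the limit the pointwise bounds available at the discrete level from Lemma~\ref{lem:controlVelpBig}, exploiting that the natural quantity is the \emph{signed flux} $w:=|(u+\phi)_s|^{p-2}(u+\phi)_s$, for which both the upper and the lower bound become one-sided (hence convex) constraints that are stable under weak convergence. Under the present hypotheses ($p>2$, $u_0\in W^{2,2}_{\rm per}(I)$ satisfying \eqref{SC}), Lemma~\ref{mehrreg} guarantees that the initial flux has a weak derivative in $L^2$ together with the periodicity needed to define $V_{0,n}$, so Lemma~\ref{lem:controlVelpBig} applies and yields, for every $i=1,\dots,n$, both $\|V_{i,n}\|_{L^2(I)}\le 1$ and $|(u_{i,n}+\phi)_s|^{p-1}\ge \frac14|\phi_s|^{p-1}$ a.e.\ on $I$. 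Writing $w_{i,n}:=|(u_{i,n}+\phi)_s|^{p-2}(u_{i,n}+\phi)_s$, the $L^\infty$-bound in Lemma~\ref{lemma3bis} together with $\|V_{i,n}\|_{L^2(I)}\le 1$ gives a uniform upper bound $\|w_{i,n}\|_{L^\infty(I)}\le C$, while Lemma~\ref{lemma3bis} (or Lemma~\ref{mehrreg}) gives $w_{i,n}\in W^{1,2}_{\rm per}(I)\hookrightarrow C^0(\bar I)$. Hence $w_{i,n}$ is continuous and nowhere zero (since $|w_{i,n}|\ge\frac14|\phi_s|^{p-1}>0$), so it has constant sign; by Lemma~\ref{diffuinfty} consecutive fluxes satisfy $\|w_{i,n}-w_{i-1,n}\|_{L^\infty(I)}\le C\sqrt\delta$, which for $\delta$ small is too small to allow a sign flip, so the sign is the same for all $i$ and equals that of $w_{0,n}$. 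Finally $w_{0,n}>0$, because otherwise $(u_0+\phi)_s\le 0$ a.e.\ would contradict $\int_I(u_0+\phi)_s\,ds=2\pi\eta>0$. Therefore
\begin{align*}
\tfrac14|\phi_s|^{p-1}\le w_{i,n}\le C \quad\text{a.e.\ on } I,\qquad i=1,\dots,n.
\end{align*}

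Next I would pass to the limit. By Definition~\ref{definition:2.5}, the piecewise constant interpolant $w_n:=|(\tilde u_n+\phi)_s|^{p-2}(\tilde u_n+\phi)_s$ equals $w_{i,n}$ on $I\times((i-1)\tau_n,i\tau_n]$, so the same two-sided bound holds for $w_n$ a.e.\ on $I\times(0,T]$; that is, $w_n$ belongs to the set
\begin{align*}
K:=\Big\{v\in L^2(0,T;L^2(I)) \ \Big|\ \tfrac14|\phi_s|^{p-1}\le v\le C \ \text{a.e.}\Big\},
\end{align*}
which is convex and strongly closed, hence weakly closed, in $L^2(0,T;L^2(I))$. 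By Lemma~\ref{lem:4.4} (cf.\ \eqref{eq:4.18} and the identification of the weak limit) we have $w_n\rightharpoonup w=|(u+\phi)_s|^{p-2}(u+\phi)_s$ in $L^2(0,T;W^{1,2}(I))$, and in particular weakly in $L^2(0,T;L^2(I))$; since $K$ is weakly closed, $w\in K$, i.e.\ $\frac14|\phi_s|^{p-1}\le w\le C$ a.e.

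To conclude, note that $w>0$ gives $|(u+\phi)_s|^{p-1}=|w|=w$, so $\frac14|\phi_s|^{p-1}\le|(u+\phi)_s|^{p-1}\le C$ a.e.; and since for a.e.\ $t$ the map $w(\cdot,t)$ lies in $W^{1,2}(I)\hookrightarrow C^0(\bar I)$, the inequalities in fact hold for every $s\in I$ and a.e.\ $t$, as claimed on $I\times[0,T]$. The main obstacle is precisely the lower bound: weak $L^2$-convergence does not preserve pointwise lower bounds on the modulus, and the decisive step is to recast the estimate in terms of the signed flux $w$ (after verifying its positivity), turning the lower bound into a convex constraint that survives the weak limit.
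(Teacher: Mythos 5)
Your overall strategy -- turn the two-sided bound on $|(u_{i,n}+\phi)_s|^{p-1}$ into a one-sided bound on the \emph{signed} flux via a sign argument, so that it becomes a convex, strongly closed, hence weakly closed constraint that survives weak convergence -- is a legitimate alternative to the paper's argument, and your sign discussion is essentially correct (indeed it can be simplified: each $w_{i,n}$ is continuous and nonvanishing on the connected interval $\bar I$, hence of constant sign, and $\int_I(u_{i,n}+\phi)_s\,ds=2\pi\eta>0$ by periodicity of $u_{i,n}$ forces that sign to be positive, with no need for the consecutive-step estimate from Lemma~\ref{diffuinfty}). However, there is one genuine gap: you assert that the weak limit $w$ of $w_n=|(\tilde u_n+\phi)_s|^{p-2}(\tilde u_n+\phi)_s$ \emph{equals} $|(u+\phi)_s|^{p-2}(u+\phi)_s$, citing Lemma~\ref{lem:4.4}. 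That lemma does not give this. What it proves (via the Minty-type convexity argument leading to \eqref{peppa4}) is that $\int_0^T\int_I w\,\partial_s\varphi = \int_0^T\int_I |(u+\phi)_s|^{p-2}(u+\phi)_s\,\partial_s\varphi$ for periodic test functions, and that the two functions have the same weak $s$-derivative and are both periodic in $s$. Since $\partial_s\varphi$ ranges only over mean-zero functions, this identifies $w$ with $|(u+\phi)_s|^{p-2}(u+\phi)_s$ only up to an additive function $c(t)$, and nothing in Lemma~\ref{lem:4.4} kills $c(t)$. Pinning down $c(t)$ would require convergence of $\int_I w_n\,ds$ to $\int_I|(u+\phi)_s|^{p-2}(u+\phi)_s\,ds$, which is again a nonlinear weak-limit identification -- essentially the strong convergence of $\partial_s\tilde u_n$ that your argument was designed to avoid. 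So, as written, your bounds on $w$ do not transfer to $|(u+\phi)_s|^{p-1}$.

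The gap is easily repaired within your own framework: apply the convex-constraint argument to $(\tilde u_n+\phi)_s$ itself rather than to the flux. Once the sign argument gives $(\tilde u_n+\phi)_s>0$, the discrete bounds read $a\le(\tilde u_n+\phi)_s\le b$ a.e.\ with $a=(\tfrac14)^{1/(p-1)}|\phi_s|>0$, the set $\{v: a\le v\le b\ \text{a.e.}\}$ is convex and closed in $L^p(0,T;L^p(I))$, and $(\tilde u_n)_s\rightharpoonup u_s$ is unambiguously established in Lemma~\ref{lem:4.2} (\eqref{eq:4.14}); hence $a\le(u+\phi)_s\le b$ a.e.\ and the claimed bounds on $|(u+\phi)_s|^{p-1}$ follow. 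Two further remarks. First, even after this repair your conclusion holds only for a.e.\ $(s,t)$ (upgradable to every $s$ for a.e.\ $t$), whereas the statement asserts the bound on all of $I\times[0,T]$; the paper's proof is different in method -- it uses the extra estimate $\int_0^T\|(V_n)_s\|_{L^2}^2\,dt\le C$ from Lemma~\ref{lem:controlVelpBig} together with interpolation and Arzel\`a--Ascoli to obtain \emph{uniform} convergence of $\partial_s\tilde u_n$ to $u_s$, which both identifies the limit for free and yields the pointwise bound everywhere. Second, your invocation of Lemma~\ref{diffuinfty} uses the $L^\infty$ bound on $w_i-w_{i-1}$ that appears only in its proof, not in its statement (which bounds the difference of the absolute values); as noted above, this step can simply be dropped.
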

\begin{proof}
 Recalling the notation of Lemma~\ref{lem:4.1}, using Lemma~\ref{lemma3bis}, Lemma~\ref{mehrreg} and the additional information given by Lemma~\ref{lem:controlVelpBig} we have that 
\begin{align}\label{linftyb2}
\sup_{ t \in [0,T]} \| \partial_{s} u_{n} (\cdot, t) \|_{L^{\infty}(I)} \leq C, \qquad \text{ with } C=C(L,c_{*},p, \phi)
\end{align}
and
\begin{align}\label{linftyb3}
\sup_{ t \in [0,T]} \| \partial^{2}_{ss} (u_{n} +\phi)(\cdot, t) \|_{L^{2}(I)} \leq \sup_{ i \in \{0, \ldots,n \}} C(\phi,p) \|( |(u_{i,n} +\phi)_{s}|^{p-2} (u_{i,n} +\phi)_{s})_{s}\|_{L^{2}(I)} \leq C. 
\end{align}
Note that actually the constant does not depend on the initial value thanks to the first condition in \eqref{SC}: hence $C=C(L,p, \phi)$.
This yields
\begin{align}\label{pip2}
|\partial_{s} u_{n}(s_{2},t) - \partial_{s} u_{n}(s_{1},t)| \leq \left|\int_{s_{1}}^{s_{2}}   (u_{n})_{ss}(s, t)ds \right| \leq C |s_{2}-s_{1}|^{\frac{1}{2}}
\end{align}
with $C=C(L,p, \phi)$.
Next note that since $\partial_{s} u_{n}(s, \cdot)$ is absolutely continuous in $[\,0, T\,]$, 
for any $t_{1}$, $t_{2} \in [\,0, T\,]$ with $t_{1}< t_{2}$, we infer from H\"older's inequality and Fubini's theorem that  
\begin{align*}
\| \partial_{s}u_{n}(\cdot, t_{2}) - \partial_{s}u_{n}(\cdot, t_{1}) \|_{L^{2}(I)} 
 &= \left( \int_{I} \av{\int^{t_{2}}_{t_{1}} \dfrac{\pd^{2} u_{n}}{\pd s\pd t}(s,\tau) \, d\tau}^{2}\, ds \right)^{\frac{1}{2}} \\
 &\le \left( \int^{t_{2}}_{t_{1}} \int_{I} \av{\dfrac{\pd^{2} u_{n}}{ \pd s\pd t}(s,\tau)}^{2}\, ds d \tau \right)^{\frac{1}{2}} (t_{2}-t_{1})^{\frac{1}{2}}. 
\end{align*} 
According to Lemma~\ref{lem:controlVelpBig}, we have
\begin{align} \label{eq:4.4bis}
\int^{t_{2}}_{t_{1}} \int_{I} \av{\dfrac{\pd^{2} u_{n}}{\pd s \pd t}(s,\tau)}^{2}\, ds d \tau
 = \int^{t_{2}}_{t_{1}} \int_{I} \av{\partial_{s} V_{n}(s,\tau)}^{2} \, ds d \tau 
 \le C.  
\end{align}
Hence we obtain
\begin{align} \label{eq:4.5bis}
\| \partial_{s} u_{n}(\cdot, t_{2}) -\partial_{s} u_{n}(\cdot, t_{1}) \|_{L^{2}(I)} \le C (t_{2}-t_{1})^{\frac{1}{2}}. 
\end{align}

Fix $0 \le t_1 \le t_2 \le T$ arbitrarily and set $$\Gm_{s}(\cdot):= \partial_{s} u_n(\cdot, t_2) - \partial_{s} u_n(\cdot, t_1) \in W^{1,2}(I).$$ 
By an interpolation inequality (see for instance \cite[Thm. 5.9]{Adams})
we find 
$ 
\| \Gm_{s} \|_{L^\infty} \le C \| \Gm_{s} \|_{L^2}^{1/2} \|  \Gm_{s} \|_{W^{1,2}}^{1/2}. 
$
Using \eqref{linftyb2} and \eqref{linftyb3} yield
$
\| \Gm_{s} \|_{L^\infty} \le C \| \Gm_{s} \|_{L^2}^{1/2}.
$
By \eqref{eq:4.5bis} we infer
$\| \Gm_{s} \|_{L^\infty} \le C | t_2 - t_1 |^{1/4} $
with $C=C(L,p, \phi)$.
Hence, it follows  that 
\begin{align}\label{eq:4.12bis} 
\| \partial_{s } u_{n}(t_{2}) - \partial_{s} u_{n}(t_{1}) \|_{C^{0}} \le C | t_{2} - t_{1} |^{1/4}. \end{align}
From the above inequality and \eqref{pip2} it follows that
\begin{align}
|\partial_{s } u_{n}( s_{2}, t_{2}) - \partial_{s } u_{n}(s_{1}, t_{1})| \leq C (|t_{2}-t_{1}|^{1/4} + |s_{2}-s_{1}|^{1/2})
\end{align}
for any $(t_{i}, s_{i}) \in [0,T] \times [0,L]$, $i=1,2$.
Application of the Arzel\`a-Ascoli theorem yields (up to a subsequence) uniform convergence of $(u_{n})_{s}$ to $u_{s}$.
Next, fix $t \in (\,0, T\,]$ arbitrarily. 
Then there exists a family of intervals $\{ (\,(i_{n}-1) \tau_{n}, i_{n} \tau_{n} \,] \}_{n \in \N}$ such that 
$t \in (\,(i_{n}-1) \tau_{n}, i_{n} \tau_{n} \,]$. 
We deduce from \eqref{eq:4.12bis} that 
\begin{align*}
\| \partial_{s}\tilde{u}_{n}(t) - \partial_{s} u_{n}(t) \|_{C^{0}(I)} 
& = \| \partial_{s} u_{i_{n},n} - \partial u_{n}(t) \|_{C^{0}(I)} 
 = \| \partial_{s}u_{n}(i_{n} \tau_{n}) - \partial_{s}u_{n}(t) \|_{C^{0}(I)} \\
& \le C |i_{n} \tau_{n} - t|^{1/4} 
 \le C \tau_{n}^{1/4} \to 0 \quad \text{as} \quad n \to \infty . 
\end{align*} 
Since $\partial_{s} u_{n} \to u_{s}$ in $C^{0}([0,T] \times I)$ by the argument shown above and $t$ was arbitrarily chosen, we infer that $\partial_{s } \tilde{u}_{n} \to \partial_{s } u$ in $C^{0}([0,T] \times I)$. 
This together with Lemma~\ref{lem:controlVelpBig}  yields the claim.
\end{proof}


Next we show that the map of Theorem~\ref{Theorem:1.1} is  unique.

\begin{lem}\label{lem:3.20}

Let the assumptions of Theorem~\ref{thm:existence} hold.
Furthermore let one of the following assumptions be fulfilled:

(i) $p=2$ and $u_{0}^{u}, u_{0}^{w} \in W^{1,2}_{\rm per}(I)$,

or

(ii) $p>2$ and $u_{0}^{u},u_{0}^{w} \in W^{2,2}_{\rm per}(I)$ and both $u_{0}^{u}$ and $u_{0}^{w}$ fulfill the smallness conditions \eqref{SC} given in Lemma~\ref{lem:controlVelpBig}.

Assume that $u,w \in L^{\infty}(0,T; W^{1,p}_{\rm per}(I)) \cap H^{1}(0,T; L^{2}(I))$ are two weak solutions of \eqref{eq:P} on 
$[0,T)$   with Lagrange multipliers $\lambda_{i}^{u}=\lambda_{i}^{w}$ for $i=1,2$ and  initial data $u_{0}^{u}$ resp. $u_{0}^{v}$. Then
\begin{align}
\int_{0}^{T} |\lambda_{j}^{u} -\lambda_{j}^{w}|^{2} dt & \leq C(T) \|u_{0}^{u}-u_{0}^{w}\|_{L^{2}(I)}^{2},\\
\sup_{\tau \in (0,T)}\| (u-w)(\tau)\|^2_{L^2(I)} + C \int_{0}^T \| \partial_s(u-w) \|^2_{L^2(I)} \, dt &\leq C(T) \|u_{0}^{u}-u_{0}^{w}\|_{L^{2}(I)}^{2}.
\end{align}
In particular if $u_{0}^{w}=u_{0}^{u}$, 
then $u=w$ and $\lambda_{i}^{u}=\lambda_{i}^{w}$ for $i=1,2$. 
\end{lem}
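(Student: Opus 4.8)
The plan is to derive a Gronwall-type differential inequality for $\|(u-w)(\cdot,t)\|_{L^2(I)}^2$. Write $v:=u-w$; since both $u$ and $w$ lie in $L^\infty(0,T;W^{1,p}_{\rm per}(I))$, the difference $v(\cdot,t)$ is an admissible test function and is periodic. Working with the strong form of the equation provided by Theorem~\ref{thm:existence} (so that every term lies in $L^2(0,T;L^2(I))$), I would subtract the two equations, test with $v$, and integrate over $I$ to obtain, for almost every $t$,
\begin{align*}
\frac{1}{2}\frac{d}{dt}\|v\|_{L^2(I)}^2 + \int_I \big(|(u+\phi)_s|^{p-2}(u+\phi)_s - |(w+\phi)_s|^{p-2}(w+\phi)_s\big)\,\partial_s v\, ds = R(t),
\end{align*}
where $R(t)$ collects the Lagrange-multiplier contributions. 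The boundary terms produced by integration by parts vanish because each flux $|(\cdot+\phi)_s|^{p-2}(\cdot+\phi)_s$ is periodic (Theorem~\ref{thm:existence}) and $v(0,t)=v(L,t)$.

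For the elliptic term I would invoke monotonicity. Since $\partial_s v=(u+\phi)_s-(w+\phi)_s$, the first inequality in Lemma~\ref{Lindquist} bounds it below by $\tfrac{1}{2}\int_I\big(|(u+\phi)_s|^{p-2}+|(w+\phi)_s|^{p-2}\big)|\partial_s v|^2\,ds$. When $p=2$ the weight is identically $1$; when $p>2$ the pointwise lower bound $|(u+\phi)_s|^{p-2}\ge c_0>0$ furnished by Lemma~\ref{lem:3.19} (which is precisely where the smallness condition \eqref{SC} is used) makes the weight uniformly positive. Thus in both cases the elliptic term dominates a positive multiple of $\|\partial_s v\|_{L^2(I)}^2$.

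The main obstacle is the estimate of $R(t)$, which reduces to controlling $|\lambda_j^u(t)-\lambda_j^w(t)|$. By Lemma~\ref{lem:3.16} we have $\lambda_j^u(t)=\tilde\lambda_j(u(\cdot,t))$ and $\lambda_j^w(t)=\tilde\lambda_j(w(\cdot,t))$, so a Lipschitz estimate for $\tilde\lambda_j$ is needed. For $p=2$, inequality \eqref{LIPlambda} gives directly $|\lambda_j^u-\lambda_j^w|\le C\|v\|_{W^{1,2}(I)}\le C(\|\partial_s v\|_{L^2(I)}+\|v\|_{L^2(I)})$, since there $L^p=L^2$. For $p>2$ the crude bound \eqref{LIPlambda} only controls $\|\partial_s v\|_{L^p(I)}$, which cannot be absorbed into the $L^2$ elliptic term; here I would instead re-run the computation of Lemma~\ref{est-lambda} using the uniform $L^\infty$-bounds on $(u+\phi)_s,(w+\phi)_s$ from Lemma~\ref{lem:3.19} together with Remark~\ref{remOkabe2}, exactly as in the third claim of Lemma~\ref{est-lambda}, to obtain the improved estimate $|\lambda_j^u-\lambda_j^w|\le C(\|\partial_s v\|_{L^2(I)}+\|v\|_{L^2(I)})$. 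This refinement, and hence the smallness hypothesis, is the crux of the $p>2$ case.

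With this in hand I would split each Lagrange term, e.g.
\begin{align*}
\lambda_1^u\sin(u+\phi)-\lambda_1^w\sin(w+\phi)=\lambda_1^u\big(\sin(u+\phi)-\sin(w+\phi)\big)+(\lambda_1^u-\lambda_1^w)\sin(w+\phi),
\end{align*}
and analogously for the cosine term. Using $|\sin(u+\phi)-\sin(w+\phi)|\le|v|$, the uniform bound $|\tilde\lambda_j|\le C$ from Lemma~\ref{lem4.2}, the estimate above for $|\lambda_j^u-\lambda_j^w|$, and Young's inequality, I would get $|R(t)|\le \varepsilon\|\partial_s v\|_{L^2(I)}^2+C_\varepsilon\|v\|_{L^2(I)}^2$. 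Choosing $\varepsilon$ small enough to be absorbed by the elliptic term yields $\tfrac{d}{dt}\|v\|_{L^2(I)}^2+c_0\|\partial_s v\|_{L^2(I)}^2\le C\|v\|_{L^2(I)}^2$. Since $v(0)=u_0^u-u_0^w$ by property (iv) of Definition~\ref{def:WS}, Gronwall's lemma gives $\sup_\tau\|v(\tau)\|_{L^2(I)}^2\le C(T)\|u_0^u-u_0^w\|_{L^2(I)}^2$, and integrating the inequality in time yields the second asserted estimate. The first estimate then follows by squaring the bound on $|\lambda_j^u-\lambda_j^w|$, integrating in time, and inserting the second estimate. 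Finally, setting $u_0^u=u_0^w$ forces $v\equiv0$, hence $u=w$ and $\lambda_j^u=\lambda_j^w$.
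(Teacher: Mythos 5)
Your proposal is correct and follows essentially the same route as the paper: test with the difference $u-w$, identify $\lambda_j=\tilde\lambda_j(u(\cdot,t))$ via Lemma~\ref{lem:3.16}, control the multiplier difference by \eqref{LIPlambda} for $p=2$ and by the \eqref{est-lambda-forte}-type argument combined with Lemma~\ref{lem:3.19} for $p>2$, bound the monotone term from below via the first inequality of Lemma~\ref{Lindquist} (with the pointwise lower bound on $|(u+\phi)_s|^{p-2}$ for $p>2$), and close with Young and Gronwall. The only cosmetic difference is that the paper localizes in time with a cutoff $\eta_\epsilon\to\chi_{[0,\tau]}$ in the weak formulation rather than writing the pointwise-in-time energy identity directly, which amounts to the same thing since $u-w\in H^1(0,T;L^2(I))$.
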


\begin{proof}
Fix $0<\tau<T$ and 
let $\eta_{\epsilon} \in C^\infty(0,T)$ be a family of functions such that 
\begin{align*}
\eta_{\epsilon}(t) \equiv 1 \quad \text{in} \quad [0, \tau], \qquad \eta_{\epsilon}(t) \equiv 0 \quad \text{in} \quad [\tau+\epsilon, T), \qquad 0 \le \eta_{\epsilon}(t) \le 1 \quad \text{in} \quad [0, T],  
\end{align*}
where $0 < \epsilon < T-\tau$, and with $\eta_{\epsilon} \to \chi_{[0,\tau]}$ as $\epsilon $ goes to zero. 
From \eqref{eq:1.2} we have 
\begin{equation} \label{eq:4.15}
\begin{split} 
& \int^T_0 \int_I \left[ \left\{ \partial_t u - \lambda_1^{u} \sin(u+\phi) + \lambda_2^{u} \cos{(u+\phi)} \right\} \varphi \right. \\ 
                & \qquad \qquad \left. + \left( |\partial_s(u+\phi)|^{p-2} \partial_s(u+\phi) \right) \partial_s \varphi \right] \, ds dt = 0,  
\end{split}
\end{equation}
and 
\begin{equation} \label{eq:4.16}
\begin{split}
& \int^T_0 \int_I \left[ \left\{ \partial_t w - \lambda_1^{w} \sin(w+\phi) + \lambda_2^{w} \cos{(w+\phi)} \right\} \varphi \right. \\ 
                & \qquad \qquad \left. + \left( |\partial_s(w+\phi)|^{p-2} \partial_s(w+\phi) \right) \partial_s \varphi \right] \, ds dt = 0,  
\end{split}
\end{equation}
for any $\varphi \in L^\infty(0,T; W^{1,p}_{\rm per}(I))$. 
Choosing $\varphi=(u-w)\eta_{\epsilon}$ in \eqref{eq:4.15} and \eqref{eq:4.16}, subtracting and letting $\epsilon \to 0$, we deduce that 
\begin{equation} \label{eq:4.17bis}
\begin{split} 
0&= \int^\tau_0 \int_I  \partial_t (u-w)(u-w) \, dsdt \\ 
& \,\,\,\,\, - \int^\tau_0 \int_I [ \lambda_1^{u} \sin(u+\phi) - \lambda_1^{w} \sin(w+\phi)](u-w) \, dsdt \\
& \,\,\,\,\, + \int^\tau_0 \int_I [ \lambda_2^{u} \cos{(u+\phi)} - \lambda_2^{w} \cos{(w+\phi)} ](u-w) \, dsdt  \\
& \,\,\,\,\, + \int^\tau_0 \int_I [ |\partial_s(u+\phi)|^{p-2} \partial_s(u+\phi) - |\partial_s(w+\phi)|^{p-2} \partial_s(w+\phi)] \partial_s(u-w) \, ds dt \\
& =: J_1 + J_2 + J_3 + J_4.    
\end{split}
\end{equation}
First we have 
\begin{align} \label{eq:4.18bis}
J_1 = \dfrac{1}{2} \int^\tau_0 \dfrac{d}{dt} \int_I  \| (u-w)(t)\|^2_{L^2(I)} \, dt 
       = \dfrac{1}{2} \| (u-w)(\tau)\|^2_{L^2(I)} -\frac{1}{2} \|u_{0}^{u}-u_{0}^{w}\|_{L^{2}(I)}^{2}. 
\end{align}
 Recalling that $\lambda_{j}^{u}=\tilde{\lambda}_{j}(u)$ by Lemma~\ref{lem:3.16},
it follows from \eqref{LIPlambda} in case $p=2$  or from Lemma~\ref{lem:3.19} and the same arguments employed to derive \eqref{est-lambda-forte} if $p>2$ that 
\begin{equation} \label{eq:4.19bis}
\begin{split}
|J_2| &\le \int^\tau_0 \int_I | \tilde{\lambda}_1(u) - \tilde{\lambda}_1(w) | |u-w| \, dsdt \\ 
         &\qquad + \int^\tau_0 \int_I | \tilde{\lambda}_1(w) | | \sin{(u+\phi)} - \sin{(w+\phi)} | |u-w| \, dsdt \\
                  &\le C \int^\tau_0 \| u-w \|^2_{L^2(I)} \, dt + C\int^\tau_0 \| \partial_s(u-w) \|_{L^2(I)}  \| u-w \|_{L^2(I)}\, dt. 
\end{split}
\end{equation}
Along the same line as in \eqref{eq:4.19bis}, we find 
\begin{align} \label{eq:4.20bis}
|J_3| \le C \int^\tau_0 \| u-w \|^2_{L^2(I)} \, dt + \int^\tau_0 \| \partial_s(u-w) \|_{L^2(I)}  \| u-w \|_{L^2(I)} \, dt. 
\end{align}
Thanks to the condition $p \ge 2$, Lemma~\ref{lem:3.19} (for the case $p>2$) we infer from the first inequality in Lemma \ref{Lindquist} that 
\begin{align} \label{eq:4.21bis}
J_4 \ge C \int^\tau_0 \| \partial_s(u-w) \|^2_{L^2(I)} \, dt, 
\end{align}
where the constant depends only on $p$ and $\phi$. 
Combining \eqref{eq:4.17bis} with \eqref{eq:4.18bis}, \eqref{eq:4.19bis}, \eqref{eq:4.20bis} and \eqref{eq:4.21bis}, we have 
\begin{equation} \label{eq:4.22bis}
\begin{split}
& \| (u-w)(\tau)\|^2_{L^2(I)} + C \int^\tau_0 \| \partial_s(u-w) \|^2_{L^2(I)} \, dt \\
& \qquad \le C \int^\tau_0 \left( \| u-w \|^2_{L^2(I)} + \|\partial_s(u-w) \|_{L^p(I)}  \| \| u-w \|_{L^2(I)} \right) \, dt
+ C \|u_{0}^{u}-u_{0}^{w}\|_{L^{2}(I)}^{2}. 
\end{split}
\end{equation}
Using Young's inequality, we reduce \eqref{eq:4.22bis} to 
\begin{equation} \label{eq:4.23}
\begin{split}
& \| (u-w)(\tau)\|^2_{L^2(I)} \le C \int^\tau_0 \| u-w \|^2_{L^2(I)} \, dt + C\|u_{0}^{u}-u_{0}^{w}\|_{L^{2}(I)}^{2}. 
\end{split}
\end{equation}
Since $\tau$ could be chosen arbitrarily,  we infer from Gronwall's inequality  that 
\begin{align*} 
 \sup_{\tau \in (0,T)}\| (u-w)(\tau)\|^2_{L^2(I)} + C \int_{0}^T \| \partial_s(u-w) \|^2_{L^2(I)} \, dt \leq C(T) \|u_{0}^{u}-u_{0}^{w}\|_{L^{2}(I)}^{2}. 
\end{align*}
The same arguments used above give then
\begin{align*}
\int_{0}^{T} |\lambda_{j}^{u} -\lambda_{j}^{w}|^{2} dt \leq  C \int_{0}^{T} \| u-w\|^2_{L^2(I)} +\| \partial_s(u-w) \|^2_{L^2(I)} \, dt \leq C(T) \|u_{0}^{u}-u_{0}^{w}\|_{L^{2}(I)}^{2}.
\end{align*}
Hence $u=v$
 and  $\lambda_{j}^{u}-\lambda_{j}^{w}$ for $j=1,2$ if the initial data coincide.
\end{proof}. 

\section{Long time existence}\label{sec:4}

\begin{proof}[Proof of Theorem { \ref{Theorem:1.2}}]
Since the energy  decreases along the flow  by Lemma~\ref{lem:3.18} we have that $F_{p}(u(T)) \leq F_{p}(u_{0})$ and Theorem~\ref{thm:3.2} can be applied again under exactly the same conditions at time $t=T$. That is, we take  $u(T)$ as the new initial value. Note that by Theorem~\ref{thm:existence} the solution is also as regular as required by Lemma~\ref{controlVel}. In fact, it is even as regular as required  by Lemma~\ref{lem:controlVelpBig}. In this case, however, the decrease of the energy, that is  the gradient structure of the problem, is not sufficient to guarantee that the second condition in \eqref{SC} is fulfilled also at time $T$. Thus, since $p=2$, we find a weak solution to the problem \eqref{eq:P} on the time interval $(T,2T)$. By iterating this procedure we obtain the claim.
\end{proof}



\bibliography{ref}
\bibliographystyle{acm}


\end{document}